\newtheorem{thm}{Theorem}[section]
\newtheorem{prop}[thm]{Proposition} 
\newtheorem{lemma}[thm]{Lemma}
\newtheorem{cor}[thm]{Corollary}
\theoremstyle{definition} 
\newtheorem{dfn}[thm]{Definition}
\theoremstyle{remark}
\newtheorem{rmk}[thm]{Remark}
\newtheorem{ex}[thm]{Example} 
\newtheorem*{rem}{Remark}
\newcommand{\la}[4]{
\xymatrix{#1 \ar[r] \ar@<2pt>[d] \ar@<-2pt>[d] & #2 \ar@<2pt>[d] \ar@<-2pt>[d] \\
#3 \ar[r] & #4}}
\newcommand{\dvb}[4]{
\xymatrix{#1 \ar[r] \ar[d] & #2 \ar[d] \\
#3 \ar[r] & #4}}
\newcommand{\pdiff}[2]{\frac{\partial #1}{\partial #2}}
\newcommand{\defequal}{:=}
\newcommand{\integers}{{\mathbb Z}}
\newcommand{\vect}{\mathfrak{X}}
\renewcommand{\bigwedge}{\mbox{{\Large{$\wedge$}}}}
\renewcommand{\tilde}[1]{\widetilde{#1}}
\renewcommand{\hat}[1]{\widehat{#1}}
\renewcommand{\bar}[1]{\overline{#1}}
\newcommand{\LA}{\mathcal{LA}}
\newcommand{\VB}{\mathcal{VB}}
\newcommand{\iso}{\stackrel{\sim}{\to}}
\newcommand{\cyclic}{\{\mbox{cycl.}\}}
\DeclareMathOperator{\im}{im}
\DeclareMathOperator{\rank}{rank}
\DeclareMathOperator{\ad}{ad}
\DeclareMathOperator{\End}{End}
\DeclareMathOperator{\str}{str}
\DeclareMathOperator{\coker}{coker}
\DeclareMathOperator{\Hom}{Hom}
\numberwithin{equation}{section}
\newcommand{\boundary}{\partial}
\newcommand{\Etot}{\mathcal{E}}
\newcommand{\starover}[1]{^*_{#1}}
\newcommand{\totaldiff}{\mathcal{D}}
\newcommand{\cs}{\mathrm{cs}}
\newcommand{\odd}{\mathrm{odd}}
\newcommand{\even}{\mathrm{even}}
\newcommand{\presup}[1]{\vphantom{ }^{#1}\!}
\newcommand{\id}{\mathrm{id}}
\newcommand{\dee}[1]{\mathrm{d} #1 \,}
\newcommand{\Wii}{\Gamma(A^* \otimes E^* \otimes C)}
\newcommand{\nue}[1]{\mathring{#1}}
\newcommand{\Ker}{\ker}
\newcommand{\coKer}{\coker}
\begin{document}
\title[$\VB$-algebroids and representation theory of Lie algebroids]{Lie algebroid structures on double vector bundles \\  and representation theory of Lie algebroids}
\author{Alfonso Gracia-Saz}
\address{Department of Mathematics\\
University of Toronto\\
40 Saint George Street, Room 6290\\
Toronto, Ontario, Canada M5S 2E4}
\email{alfonso@math.toronto.edu}

\author{Rajan Amit Mehta}
\address{Department of Mathematics\\
Washington University in Saint Louis\\
One Brookings Drive\\
Saint Louis, Missouri, USA 63130}
\email{raj@math.wustl.edu}

\begin{abstract}
  A $\VB$--algebroid is essentially defined as a Lie algebroid object in the category of vector bundles.  There is a one-to-one correspondence between $\VB$--algebroids and certain flat Lie algebroid superconnections, up to a natural notion of equivalence.  In this setting, we are able to construct characteristic classes, which in special cases reproduce characteristic classes constructed by Crainic and Fernandes.  We give a complete classification of regular $\VB$--algebroids, and in the process we obtain another characteristic class of Lie algebroids that does not appear in the ordinary representation theory of Lie algebroids.
\end{abstract}
\maketitle

\section{Introduction}

Double structures, such as double vector bundles, double Lie groupoids,  double Lie algebroids, and $\LA$--groupoids, have been extensively studied by Kirill Mackenzie and his collaborators \cite{mac:dbl2, mac:dblie2, mackenzie-2000, mac:duality, mackenzie-2006}.    In this paper, we study $\VB$--algebroids, which are essentially Lie algebroid objects in the category of vector bundles.  The notion of $\VB$--algebroids is equivalent to that of Mackenzie's $\LA$--vector bundles \cite{mac:dbl2}, which are essentially vector bundle objects in the category of Lie algebroids.  Our guiding principle is that $\VB$--algebroids may be viewed as generalized Lie algebroid representations.  

An obvious drawback of the usual notion of Lie algebroid representations is that there is no natural ``adjoint'' representation; for a Lie algebroid $A \to M$, the action of $\Gamma(A)$ on itself via the bracket is generally not $C^\infty(M)$-linear in the first entry.  One possible solution to this problem was given by Evens, Lu, and Weinstein \cite{elw}, in the form of representations ``up to homotopy''.  Briefly, a representation up to homotopy is an action of $\Gamma(A)$ on a $\integers_2$-graded complex of vector bundles, where the $C^\infty(M)$-linearity condition is only required to hold up to an exact term.  With this definition, they were able to construct an adjoint representation up to homotopy of $A$ on the ``$K$--theoretic'' formal difference $A \ominus TM$.  This representation up to homotopy was used by Crainic and Fernandes \cite{crainic:vanest, cf} to construct characteristic classes for a Lie algebroid, the first of which agrees up to a constant with the modular class of \cite{elw}.

Another notion of an adjoint representation was given by Fernandes \cite{fernandes}, who generalized Bott's theory \cite{bott:cc} of secondary, or ``exotic'', characteristic classes for regular foliations.  The key element in Fernandes's construction is the notion of a \emph{basic connection}, which, for a Lie algebroid $A \to M$, is a pair of $A$-connections on $A$ and $TM$, satisfying certain conditions.  These conditions imply that, although the individual connections generally have nonzero curvature, there is a sense in which they are flat on the formal difference; therefore, they can be used to produce secondary characteristic classes.  It was shown in \cite{cf} that these characteristic classes agree up to a constant with those constructed via the adjoint representation up to homotopy in \cite{crainic:vanest}.

Yet another generalized notion of Lie algebroid representation appears in Vaintrob's paper \cite{vaintrob} on the supergeometric approach to Lie algebroids.  There, a \emph{module} over a Lie algebroid $A$ is defined as a $Q$-vector bundle (i.e.\ vector bundle in the category of $Q$-manifolds) with base $A[1]$.  To our knowledge, this idea has not previously been explored in depth.

One can immediately see from the supergeometric perspective that a $\VB$--algebroid is a special case of a Lie algebroid module.  Thus, we may interpret the notion of $\VB$--algebroids as providing a description of certain Lie algebroid modules in the ``conventional'' language of brackets and anchors.  In particular, this special case includes Vaintrob's adjoint and coadjoint modules (see Example \ref{ex:lavb}).

As we see in Theorem \ref{thm:summary}, every $\VB$--algebroid may be noncanonically ``decomposed'' to give a flat Lie algebroid superconnection on a $2$-term complex of vector bundles, and conversely, one can construct a decomposed $\VB$--algebroid from such a superconnection.  We show in Theorem \ref{thm:vbtoflat} that different choices of decomposition correspond to superconnections that are equivalent in a natural sense; therefore we have the following key result: \emph{There is a one-to-one correspondence between isomorphism classes of $\VB$--algebroids and equivalence classes of $2$-term flat Lie algebroid superconnections.}

Given a Lie algebroid $A \to M$, a decomposition of the tangent prolongation $\VB$--algebroid $TA$ yields a flat $A$-superconnection on $A[1]\oplus TM$, the diagonal components of which form a basic connection in the sense of Fernandes \cite{fernandes}.  In fact, the $\VB$--algebroid $TA$ is a canonical object from which various choices of decomposition produce \emph{all} basic connections.  We interpret the $\VB$--algebroid $TA$ as playing the role of the adjoint representation.

As in the case of ordinary representations, one may obtain characteristic classes from $\VB$--algebroids; the construction of these classes is described in \S\ref{sec:cc}.  In the case of $TA$, one can see that our characteristic classes coincide with the Crainic-Fernandes classes.  

Then, we consider $\VB$--algebroids that are regular in the sense that the coboundary map in the associated $2$-term complex of vector bundles is of constant rank.  As in the case of representations up to homotopy (in the sense of Evens, Lu, Weinstein), there are canonical Lie algebroid representations on the cohomology $H(\Etot)$ of the complex.  However, we find that there is an additional piece of data---a canonical class $[\omega]$ in the 2nd Lie algebroid cohomology with values in degree $-1$ morphisms of $H(\Etot)$.  The two representations, together with $[\omega]$, completely classify regular $\VB$--algebroids.  

Finally, in the case of the $\VB$--algebroid $TA$, the class $[\omega]$ is an invariant of the Lie algebroid $A$.  More specifically, given a regular Lie algebroid $A$ with anchor map $\rho$, the cohomology class $[\omega]$ associated to the $\VB$--algebroid $TA$ is an element of $H^2(A; \Hom(\coker \rho, \ker \rho))$.  We see that $[\omega]$ may be interpreted as an obstruction to the regularity of the restrictions of $A$ to leaves of the induced foliation.

\begin{rem}
After submitting this paper, we learned of the work of Arias Abad and Crainic \cite{ari:rhla}, in which some of the constructions in this paper are developed independently.  In particular, they define the notion of \emph{representation up to homotopy}, which coincides with our notion of superrepresentation as in Definition \ref{dfn:asuper}, and which is different from the notion of representation up to homotopy according to Evens, Lu, and Weinstein \cite{elw}.  As we show in Theorem \ref{thm:summary}, a $\VB$--algebroid, after choosing a decomposition, corresponds to a superrepresentation in two degrees.  As an analogy, a $\VB$--algebroid is to a superrepresentation  in two degrees what a linear map is to a matrix, with choice of decomposition playing the role of choice of basis.  This is described in further detail for the example $TA$ (which both here and in \cite{ari:rhla} is interpreted as the adjoint representation) in \textsection \ref{sec:ta}.  
\end{rem}

\subsection*{Structure of the paper}
\begin{itemize}
  \item The objects that this paper deals with are \emph{double vector bundles} equipped with additional structures.  We begin in \S\ref{sec:dvb} by recalling the definition of a double vector bundle and describing some of its properties.
  \item In \S\ref{sec:dla} we introduce our main object: $\VB$--algebroids, and we present various equivalent structures.
  \item In \S\ref{sec:genconns} we give a one-to-one correspondance between isomorphism classes of $\VB$--algebroids and certain equivalence classes of flat superconnections, hence interpreting $\VB$--algebroids as ``higher'' Lie algebroid representations.
  \item In \S\ref{sec:cc} we define characteristic classes for every $\VB$--algebroid.
  \item In \S\ref{sec:classification} we classify all regular $\VB$--algebroids.
  \item Finally, in \S\ref{sec:ta} we use the results from \S\ref{sec:classification} in the case of the ``adjoint representation'' to associate a cohomology class to every regular Lie algebroid that has a geometric interpretation in terms of regularity around leaves induced by the algebroid foliation.
\end{itemize}

\subsection*{Acknowledgements}
We were partially supported by grants from Conselho Nacional de Desenvolvimento Cient\'{i}fico e Tecnol\'{o}gico (CNPq) and the Japanese Society for the Promotion of Science (JSPS).  We thank the Centre de Recerca Matem\`{a}tica and the Centre Bernoulli for their hospitality while this research was being done.  We also thank Eckhard Meinrenken.

\section{Background: Double vector bundles}\label{sec:dvb}

The main objects that this paper deals with are \emph{double vector bundles} (DVBs) equipped with additional structures.  Therefore we shall begin in this section by briefly recalling the definition of a DVB and describing some properties of DVBs that will be useful later.  For details and proofs, see \cite{mac:duality}.

A DVB is essentially a vector bundle in the category of vector bundles.  The notion of a DVB was introduced by Pradines \cite{pradines} and has since been studied by Mackenzie \cite{mac:duality} and Konieczna and Urba\'{n}ski \cite{kon-urb}.  Mackenzie has also introduced higher objects ($n$-fold vector bundles \cite{mac:duality}) and more general double structures ($\LA$-groupoids and double Lie algebroids \cite{mac:dbl2,mac:dblie2, mackenzie-2000,mackenzie-2006}).  Recently, Grabowski and Rotkiewicz \cite{grabowski} have studied double and $n$-fold vector bundles from the supergeometric point of view.  

Most of the material in this section has appeared in the above-referenced work of Mackenzie.

\subsection{Definition of DVB}

In order to define double vector bundle, we begin with a commutative square
\begin{equation}\label{dvb}
\xymatrix{D \ar^{q^D_B}[r] \ar_{q^D_A}[d] & B \ar^{q_B}[d] \\ A \ar^{q_A}[r] & M},
\end{equation}
where all four sides are vector bundles.  We wish to describe compatibility conditions between the various vector bundle structures.

We follow the notation of \cite{mac:duality}.  In particular, the addition maps for the two vector bundle structures on $D$ are $+_A: D \times_A D \to D$ and $+_B: D \times_B D \to D$.  The zero sections are denoted as $0^A: M \to A$, $0^B: M \to B$, $\tilde{0}^A: A \to D$, and $\tilde{0}^B: B \to D$.

We leave the proof of the following proposition as an exercise.

\begin{prop}\label{prop:dvb}
The following conditions are equivalent:
\begin{enumerate}
\item $q^D_B$ and $+_B$ are vector bundle morphisms over $q^A$ and the addition map $+: A \times_M A \to M$, respectively.
\item $q^D_A$ and $+_A$ are vector bundle morphisms over $q^B$ and the addition map $+: B \times_M B \to M$, respectively.
\item For all $d_1$, $d_2$, $d_3$, and $d_4$ in $D$ such that $(d_1,d_2) \in D \times_B D$, $(d_3,d_4) \in D \times_B D$, $(d_1,d_3) \in D \times_A D$, and $(d_2,d_4) \in D \times_A D$, the following equations hold:
	\begin{enumerate}
	\item $q^D_A(d_1 +_B d_2) = q^D_A(d_1) + q^D_A(d_2)$,
	\item $q^D_B(d_1 +_A d_3) = q^D_B(d_1) + q^D_B(d_3)$,
	\item $(d_1 +_B d_2) +_A (d_3 +_B d_4) = (d_1 +_A d_3) +_B (d_2 +_A d_4)$.
	\end{enumerate}
\end{enumerate}
\end{prop}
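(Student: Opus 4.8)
The plan is to reduce each of conditions (1) and (2) to the system of equations in (3), after which the chain (1) $\Leftrightarrow$ (3) $\Leftrightarrow$ (2) is immediate. The key structural observation is that the whole setup is symmetric under interchanging the roles of $A$ and $B$ (equivalently, reflecting the square \eqref{dvb} across its main diagonal, which swaps the corners $A$ and $B$ while fixing $D$ and $M$). Consequently it suffices to prove (1) $\Leftrightarrow$ (3) in detail; the equivalence (2) $\Leftrightarrow$ (3) then follows verbatim with $A$ and $B$ interchanged.

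To prove (1) $\Rightarrow$ (3) I would unpack the two morphism conditions one at a time. The statement that $q^D_B\colon D\to B$ is a morphism from the bundle $D\to A$ to the bundle $B\to M$ covering $q_A$ contains, as its fiberwise-additivity part, exactly the identity $q^D_B(d_1 +_A d_3) = q^D_B(d_1) + q^D_B(d_3)$, which is (b). The statement that $+_B$ is a morphism over the fiberwise addition $+\colon A\times_M A \to A$ splits into two pieces: the requirement that $+_B$ \emph{cover} that base map is the identity $q^D_A(d_1 +_B d_2) = q^D_A(d_1) + q^D_A(d_2)$, which is (a); and the fiberwise additivity of $+_B$ is (c), once one identifies the fiber of $D\times_B D \to A\times_M A$ over $(a_1,a_2)$ with $\{(d_1,d_2)\colon q^D_A(d_i)=a_i,\ q^D_B(d_1)=q^D_B(d_2)\}$ and its addition with $(d_1,d_2)+(d_3,d_4)=(d_1+_A d_3,\, d_2+_A d_4)$. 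For the converse (3) $\Rightarrow$ (1) I would simply read these identifications in reverse, so the heart of the argument is the bookkeeping that matches each of (a), (b), (c) to the correct half of condition (1).

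The step needing the most care is ensuring that the mere \emph{additivity} recorded in (3) actually delivers \emph{vector bundle} morphisms, i.e.\ maps that also respect scalar multiplication and the zero sections. Compatibility with zero sections is automatic, since an additive fiberwise map is a homomorphism of the fiberwise additive groups and hence carries the zero element (the relevant zero section among $\tilde 0^A$, $\tilde 0^B$) to the zero element. For scalar multiplication I would invoke the standard fact that a smooth (indeed merely continuous) additive map between finite-dimensional real vector spaces is automatically $\reals$-linear; applied fiberwise to the smooth maps $q^D_B$, $q^D_A$, $+_B$, $+_A$, this promotes additivity to full fiberwise linearity for free. A final point of bookkeeping, and the one I would flag as the subtle obstacle, is that the bundle structure on $D\times_B D \to A\times_M A$ presupposed by condition (1) is only well defined once $q^D_B$ is already known to be a bundle morphism: linearity of $q^D_B$ on fibers is precisely what makes each fiber of $D\times_B D$ a linear subspace and what guarantees that componentwise $+_A$ preserves the constraint $q^D_B(d_1)=q^D_B(d_2)$. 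This is why (b), extracted from the first half of (1), must be established before (c) makes sense, and the analogous remark applies after the $A\leftrightarrow B$ swap.
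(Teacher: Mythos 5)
The paper deliberately leaves this proposition as an exercise, so there is no in-text proof to compare against; judged on its own, your argument is correct and is the intended one. You match (b) to the fibrewise additivity of $q^D_B$, (a) to the requirement that $+_B$ cover $+\colon A\times_M A\to A$, and (c) to the fibrewise additivity of $+_B$, and the two points you flag — that smooth (continuous) additive maps of fibres are automatically $\reals$-linear, so additivity suffices for bundle-morphism linearity, and that (b) must be in place before $D\times_B D\to A\times_M A$ is a well-defined vector bundle over which (c) can be stated — are precisely where the care is needed. The observation that condition (3) is invariant under the $A\leftrightarrow B$ reflection (after relabelling $d_2\leftrightarrow d_3$ in (c)) then yields (2)$\Leftrightarrow$(3) verbatim, completing the equivalence.
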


\begin{dfn}A \emph{double vector bundle}(DVB) is a commutative square (\ref{dvb}), where all four sides are vector bundles, satisfying the conditions of Proposition \ref{prop:dvb}.
\end{dfn}

\begin{rmk} A smooth map between vector bundles that respects addition is a vector bundle morphism.  For this reason, it is unnecessary to refer to scalar multiplication in condition (3) of Proposition \ref{prop:dvb}.  Alternatively, Grabowski and Rotkiewicz \cite{grabowski} have given an equivalent definition of DVBs only in terms of scalar multiplication; they also give an interesting interpretation in terms of commuting Euler vector fields.
\end{rmk}

\begin{rmk}
It is sometimes requested as part of the definition of DVB that the double projection $(q^D_A,q^D_B):D \to A \oplus B$ be a surjective submersion.  Grabowski and Rotkiewicz \cite{grabowski} proved that this is a consequence of the rest of the definition.
\end{rmk}

\subsection{The core of a DVB}\label{sec:core}
The structure of a DVB (\ref{dvb}) obviously includes two vector bundles, $A$ and $B$, over $M$, which are called the \emph{side bundles}.  There is a third vector bundle $C$, known as the \emph{core}, defined as the intersection of the kernels of the bundle maps $q^D_A$ and $q^D_B$.  Out of the three bundles $A$, $B$, and $C$, the core is special in that it naturally embeds into $D$.  In fact, it fits into the short exact sequence of double vector bundles
\begin{equation} \label{shortexact}
\raisebox{1.5pc}{\dvb{C}{M}{M}{M}}     
\xymatrix{\ar@{^{(}->}[r] &}  
\raisebox{1.5pc}{\dvb{D}{B}{A}{M}}  
\xymatrix {\ar@{->>}[r] &}
\raisebox{1.5pc}{\dvb{A \oplus B}{B}{A}{M}}
\end{equation}

Given vector bundles $A$, $B$, and $C$, there is a natural double vector bundle structure on $A \oplus B \oplus C$ with side bundles $A$ and $B$ and core $C$; this DVB is said to be \emph{decomposed}.

A section (in the category of double vector bundles) of (\ref{shortexact}) is equivalent to an isomorphism inducing the identity map on $A$, $B$, and $C$, between $D$ and the decomposed DVB $A \oplus B \oplus C$.  This isomorphism is called a \emph{decomposition} of $D$.  Grabowski and Rotkiewicz \cite{grabowski} proved that decompositions always exist locally (over open sets of $M$), and a \v{C}ech cohomology argument shows that decompositions exist globally.  In fact, the space of decompositions of $D$ is a nonempty affine space modelled on $\Gamma(A^* \otimes B^* \otimes C)$.  Hence, a section of (\ref{shortexact}) always exists, albeit noncanonically.

\subsection{Linear and core sections}

Consider a DVB as in (\ref{dvb}).  There are two special types of sections of $D$ over $B$, which we call \emph{linear} and \emph{core}\footnote{In \cite{mythesis}, the term \emph{vertical} was used instead of \emph{core}. However, it is now apparent that the present terminology is more appropriate.} sections. 
As we will use in various proofs in Appendix \ref{app:sth}, statements about sections of $D$ over $B$ can often be reduced to statements about linear and core sections.  

\begin{dfn}
A section $X \in \Gamma(D,B)$ is \emph{linear} if $X$ is a bundle morphism from $B \to M$ to $D \to A$.  The space of linear sections is denoted as $\Gamma_\ell(D,B)$.
\end{dfn}

The core sections arise from sections of the core bundle $C \to M$, in the following way.  Let $\alpha: M \to C$ be a section of the core.  The composition $\iota \circ \alpha \circ q^B$, where $\iota$ is the embedding of $C$ into $D$, is a map from $B$ to $D$ but is not a right inverse of $q^D_B$.  Instead, $\Gamma(C)$ is embedded into $\Gamma(D,B)$ by
\begin{equation}\label{eqn:coreembed}
  \alpha \in \Gamma(C) \mapsto \bar{\alpha} \defequal \iota \circ \alpha \circ q^B +_A \tilde{0}^B \in \Gamma(D,B).
\end{equation}

\begin{dfn} \label{dfn:coresec}
The space $\Gamma_C(D,B)$ of \emph{core sections} is the image of the map \eqref{eqn:coreembed}.
\end{dfn}
In the rest of this paper we will use the same notation for $\alpha$ and $\bar{\alpha}$ if there is no ambiguity.

Let $X$ be a section of $D$ over $B$.  We say that $X$ is \emph{$q$-projectible} (to $X_0$) if $X_0 \in \Gamma(A)$ and $q^D_A \circ X = X_0 \circ q^B$.  A linear section $X$ is necessarily $q$-projectible to its \emph{base section}.  All core sections are $q$-projectible to the zero section $0^A$.  Conversely, if $\alpha$ is $q$-projectible to $0^A$, then $\alpha$ is a core section if and only if the map $(\alpha -_A \tilde{0}^B)$ is constant on the fibres over $M$.

\begin{rmk}\label{rmk:coords}
It may be helpful to see a coordinate description of the linear and core sections.  Choose a decomposition $D \equiv A \oplus B \oplus C$, and choose local coordinates $\{x^i, b^i, a^i, c^i\}$, where $\{x^i\}$ are coordinates on $M$, and $\{b^i\}$, $\{a^i\}$, and $\{c^i\}$ are fibre coordinates on $B$, $A$, and $C$, respectively.  Let $\{A_i, C_i\}$ be the frame of sections over $B$ dual to the fibre coordinates $\{a^i, c^i\}$.  Then $X \in \Gamma(D,B)$ is linear if and only if it locally takes the form
\begin{equation}\label{eqn:locallinear}
X = f^i(x) A_i + g^i_j (x) b^j C_i,
\end{equation}
and $\alpha \in \Gamma(D,B)$ is core if and only if it locally takes the form
\begin{equation*}
\alpha = f^i(x) C_i.
\end{equation*}
\end{rmk}

\begin{ex}\label{ex:te}
A standard example of a DVB is
\begin{equation}\label{eqn:te}
\xymatrix{TE \ar[r] \ar[d] & E \ar[d] \\ TM \ar[r] & M},
\end{equation}
where $E \to M$ is a vector bundle.  The core, consisting of vertical vectors tangent to the zero section of $E \to M$, is naturally isomorphic to $E$.  The linear sections of $TE$ over $E$ are the linear vector fields, and the core sections are the fibrewise-constant vertical vector fields.
\end{ex}

It is possible to characterize morphisms of DVBs in terms of linear and core sections.  Let 
\begin{equation*}
  \dvb{D}{B}{A}{M} \, \mbox{ and } \, \dvb{D'}{B}{A'}{M}
\end{equation*}
be DVBs with cores $C$ and $C'$, respectively.  Let $F: D \to D'$ be a map that is linear over $B$, and let $F^\sharp: \Gamma(D,B) \to \Gamma(D',B)$ be the induced map of sections.
\begin{lemma}\label{lemma:lcbundle}
  Under the above conditions, the following are equivalent:
\begin{enumerate}
  \item $F^\sharp$ sends linear sections to linear sections and core sections to core sections.
\item $F$ is a morphism of vector bundles from $D \to A$ to $D' \to A'$.  In other words, $F$ is linear with respect to both the horizontal and vertical vector bundle structures, which is the definition of a morphism of double vector bundles.
\end{enumerate}
\end{lemma}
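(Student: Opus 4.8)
The plan is to treat the two implications asymmetrically: $(2)\Rightarrow(1)$ is essentially formal, while all the work is in $(1)\Rightarrow(2)$, which I would settle by a local-coordinate computation. For $(2)\Rightarrow(1)$, suppose $F$ is a morphism of double vector bundles, so that it covers some bundle map $\bar F\colon A\to A'$ and the identity on $B$, and is linear for both structures. Then $F$ carries the core $C=\ker q^D_A\cap\ker q^D_B$ into $C'$: if $d\in C$ then $q^{D'}_{A'}(F(d))=\bar F(q^D_A(d))=0^{A'}$ and $q^{D'}_{B}(F(d))=q^D_B(d)=0^B$. Since $F$ also respects $\tilde{0}^B$ and $+_A$, it intertwines the core embedding \eqref{eqn:coreembed}, so $F^\sharp$ sends core sections to core sections. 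A linear section is by definition a bundle morphism $B\to D$ over $A$, and postcomposing with the bundle morphism $F$ over $\bar F$ yields a bundle morphism $B\to D'$ over $A'$; hence $F^\sharp$ preserves linearity as well.

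For $(1)\Rightarrow(2)$ I would argue in the coordinates of Remark \ref{rmk:coords}. Choose decompositions $D\equiv A\oplus B\oplus C$ and $D'\equiv A'\oplus B\oplus C'$ with coordinates $\{x^i,b^i,a^i,c^i\}$ and $\{x^i,b^i,a'^i,c'^i\}$; the $x$ and $b$ coordinates may be taken common, since the two DVBs share the side $B$ and base $M$ and $F$ covers $\id_B$. Linearity of $F$ over $B$ means that for fixed $(x,b)$ it is a linear map on the $\to B$ fibre $A\oplus C$, so
\[
F(x,a,b,c) = \bigl(x,\ P(x,b)\,a + Q(x,b)\,c,\ b,\ R(x,b)\,a + S(x,b)\,c\bigr)
\]
for $(x,b)$-dependent bundle maps $P,Q,R,S$. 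The strategy is to feed the two preservation hypotheses into this form. A core section $\alpha=f^i(x)C_i$ has value $(x,0,b,f(x))$, so $F^\sharp\alpha$ has $A'$-component $Q(x,b)f(x)$ and $C'$-component $S(x,b)f(x)$; demanding that this be a core section (zero $A'$-component, $b$-independent $C'$-component) for every $f$, whose pointwise values are arbitrary, forces $Q=0$ and $S=S(x)$. A linear section $X=f^i(x)A_i+g^i_j(x)b^jC_i$ has value $(x,f(x),b,g(x)b)$, so, using $Q=0$, $F^\sharp X$ has $A'$-component $P(x,b)f(x)$ and $C'$-component $R(x,b)f(x)+S(x)g(x)b$; demanding linearity ($b$-independent $A'$-component, $b$-linear $C'$-component) for all $f,g$ forces $P=P(x)$ and $R(x,b)$ to be linear in $b$ (in particular $R(x,0)=0$).

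It then remains to read off the conclusion. With these constraints,
\[
F(x,a,b,c) = \bigl(x,\ P(x)\,a,\ b,\ R(x,b)\,a + S(x)\,c\bigr).
\]
The $A'$-component depends only on $(x,a)$, so $F$ covers the bundle map $\bar F(x,a)=(x,P(x)a)\colon A\to A'$; and for each fixed $a$ the remaining components $(b,\ R(x,b)a+S(x)c)$ are linear in $(b,c)$ because $R$ is linear in $b$ and $S$ is independent of $b$. Hence $F$ is linear for the $\to A$, $\to A'$ structures, which together with linearity over $B$ is exactly the definition of a morphism of double vector bundles.

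The entire content lies in the direction $(1)\Rightarrow(2)$, and the delicate point is the leverage step: the hypothesis is phrased for \emph{all} linear and core sections, and I must use the pointwise arbitrariness of the coefficient functions $f$ and $g$ to pin down the coordinate data ($Q=0$, $P$ and $S$ independent of $b$, and $R$ linear in $b$). I would also remark that although the argument uses decompositions, the conclusion $(2)$ is decomposition-free, so the choice of decomposition serves only as a computational device and no independence check beyond this is needed.
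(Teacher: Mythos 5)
Your proof is correct and follows exactly the route the paper intends: the paper's proof of this lemma is literally ``a straightforward exercise in coordinates,'' and your argument carries out that exercise, with the formal direction $(2)\Rightarrow(1)$ handled structurally and the substantive direction $(1)\Rightarrow(2)$ pinned down by testing the block components $P,Q,R,S$ against arbitrary core and linear sections in the local form of Remark \ref{rmk:coords}.
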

\begin{proof}
  The proof is a straightforward exercise in coordinates.
\end{proof}

\subsection{Horizontal lifts}\label{sec:horizontal}
Linear sections may be used to introduce a concept that is equivalent to that of a decomposition of a DVB, which will be useful later.

It is clear from the local description of linear sections (\ref{eqn:locallinear}) that the space $\Gamma_\ell (D,B)$ is locally free as a $C^\infty(M)$-module, with rank equal to $\rank(A) + \rank(B)\rank(C)$.  Therefore, $\Gamma_\ell(D,B)$ is equal to $\Gamma(\hat{A})$ for some vector bundle $\hat{A} \to M$.  

There is a short exact sequence of vector bundles over $M$
\begin{equation}\label{shortexact-ahat}
\xymatrix{0 \ar[r] & B^* \otimes C = \Hom(B,C) \ar^-i[r] & \hat{A} \ar^\pi [r] & A \ar[r] & 0}.
\end{equation}

\begin{dfn}\label{dfn:hor}
  A \emph{horizontal lift} of $A$ in $D$ is a section $h: A \to \hat{A}$ of the short exact sequence \eqref{shortexact-ahat}.
\end{dfn}

\begin{prop}
  There is a one-to-one correspondence between horizontal lifts and decompositions $D \iso A \oplus B \oplus C$.
\end{prop}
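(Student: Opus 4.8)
The plan is to exhibit the correspondence as an affine isomorphism of torsors over the space $V \defequal \Gamma(A^* \otimes B^* \otimes C) = \Gamma(\Hom(A,\Hom(B,C)))$, exploiting that both sides are modelled on this same space. First I would record that horizontal lifts form a nonempty affine space over $V$: this is the standard fact that the splittings of a short exact sequence of vector bundles $0 \to B^*\otimes C \to \hat{A} \to A \to 0$ form an affine space modelled on $\Hom(A, B^*\otimes C)$, since the difference $h - h'$ of two splittings factors through $\ker \pi = i(B^*\otimes C)$ and hence defines an element of $\Gamma(A^*\otimes B^*\otimes C)$, while $h + i\circ\phi$ is again a splitting for every $\phi \in V$. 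On the other side, \S\ref{sec:core} records that the space of decompositions of $D$ is a nonempty affine space modelled on the same $V$.

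Next I would define a map $\Phi$ from decompositions to horizontal lifts. A decomposition is a statomorphism $\Sigma \colon A \oplus B \oplus C \iso D$ covering the identity on the side bundles, the core, and $M$. In the decomposed DVB there is a canonical $C^\infty(M)$-linear family of linear sections $\ell_a \in \Gamma_\ell(A \oplus B \oplus C, B)$ for $a \in \Gamma(A)$, namely $\ell_a \colon b \mapsto (a(q^B b), b, 0)$, each having base section $a$. By Lemma \ref{lemma:lcbundle} the induced map $\Sigma^\sharp$ carries linear sections to linear sections, so $a \mapsto \Sigma^\sharp \ell_a$ is a $C^\infty(M)$-linear map $\Gamma(A) \to \Gamma_\ell(D,B) = \Gamma(\hat{A})$ whose base section is $a$; by tensoriality this is a bundle map $h = \Phi(\Sigma) \colon A \to \hat{A}$ with $\pi \circ h = \id$, i.e.\ a horizontal lift in the sense of \eqref{shortexact-ahat}.

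The heart of the argument is to show that $\Phi$ is affine with linear part the identity of $V$, for then it is automatically a bijection. Concretely, two decompositions differ by a DVB automorphism of $A \oplus B \oplus C$ fixing the sides and core, and these are exactly the core-shears $S_\phi \colon (a,b,c) \mapsto (a, b, c + \phi(a)(b))$ for $\phi \in V$; so shifting $\Sigma$ by $\phi$ replaces it with $\Sigma \circ S_\phi$. Since $S_\phi^\sharp \ell_a$ is $\ell_a$ twisted by $\phi(a) \in \Hom(B,C)$, transporting by $\Sigma^\sharp$ gives $\Phi(\Sigma \circ S_\phi) = \Phi(\Sigma) + i\circ\phi$, whence the linear part of $\Phi$ is $\id_V$. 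The cleanest way to pin down these identifications is the coordinate picture of Remark \ref{rmk:coords}: in a reference decomposition a lift reads $h(e_i) = A_i + h^j_{ik}(x)\, b^k C_j$, and $\Phi$ sends the shear determined by $(h^j_{ik})$ to precisely this $h$, making the matching of the two $V$-actions transparent.

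Finally, for reassurance I would write the inverse of $\Phi$ explicitly: given a horizontal lift $h$, set $\Sigma(a,b,c) \defequal h(a)|_b +_B \alpha_c|_b$, where $h(a)|_b$ is the value at $b$ of the linear section $h(a)$ (well defined by tensoriality of $h$) and $\alpha_c$ is the core section \eqref{eqn:coreembed} attached to $c$; one checks with Proposition \ref{prop:dvb} that this is a DVB isomorphism. I expect the main obstacle to lie exactly in this matching step—confirming that the affine structure on decompositions coming from the DVB-section picture of \eqref{shortexact} agrees, under $\Phi$, with the splitting-difference affine structure on lifts. Once the linear part of $\Phi$ is identified with $\id_V$ (or, harmlessly, any isomorphism), bijectivity is immediate and the proposition follows.
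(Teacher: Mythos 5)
Your proof is correct and follows essentially the same route as the paper: both spaces are nonempty affine spaces modelled on $\Gamma(A^* \otimes B^* \otimes C)$, the map from decompositions to horizontal lifts is obtained by transporting the canonical lift of the decomposed DVB, and this map is affine with linear part the identity, hence a bijection. Your write-up merely supplies the details (the core-shears, the explicit inverse) that the paper's proof leaves implicit.
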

\begin{proof}
There is a natural horizontal lift in the case of a decomposed double vector bundle $A \oplus B \oplus C$.  Therefore, there is a map $\kappa$ from decompositions $D \iso A \oplus B \oplus C$ to horizontal lifts of $D$.  The spaces of decompositions and of horizontal lifts are both affine spaces modelled on $\Gamma(A^* \otimes B^* \otimes C)$.  The map $\kappa$ is affine, and the associated linear map is the identity.
\end{proof}

\begin{ex}
  For the DVB of Example \ref{ex:te}, a decomposition $TE \iso TM \oplus E \oplus E$ is the same thing as a linear connection on $E \to M$.  A horizontal lift, in the sense of Definition \ref{dfn:hor}, coincides in this case with the usual notion of a horizontal lift for $E \to M$.
\end{ex}

\section{Doubles for Lie algebroids and vector bundles}\label{sec:dla}

The main object of study in this paper consists of a double vector bundle with additional structure.  There are various equivalent ways to describe the additional structure, including $\LA$--vector bundles (\S\ref{sec:lavb}), $\VB$--algebroids (\S\ref{sec:vba}), and Poisson double vector bundles (\S\ref{sec:dlp}).  There are also interpretations in terms of differentials (\S\ref{sec:diff}) and supergeometry (\S\ref{sec:supermfld}).

\subsection{\texorpdfstring{$\LA$}{LA}-vector bundles}\label{sec:lavb}
An $\LA$--vector bundle is essentially a vector bundle in the category of Lie algebroids.  More precisely, it is a DVB
\begin{equation}\label{lavb}
\xymatrix{D \ar^{q^D_E}[r] \ar_{q^D_A}[d] & E \ar^{q^E}[d]\\ A \ar^{q^A}[r] & M},
\end{equation}
where the horizontal sides are Lie algebroids and the structure maps for the vertical vector bundle structures form Lie algebroid morphisms.  Specifically, if $q^D_A$ is an algebroid morphism, then there is an induced Lie algebroid structure on the fibre product $D \times_A D \to E \times_M E$, and we can ask that the addition map $+_A: D \times_A D \to D$ be an algebroid morphism.  The notion of an $\LA$--vector bundle is due to Mackenzie \cite{mac:dbl2}.

\begin{rmk} \label{rem:proj}
Consider a DVB of the form (\ref{lavb}).  Given a Lie algebroid structure on $D \to E$, there is at most one Lie algebroid structure on $A \to M$ such that $q^D_A$ is an algebroid morphism.  If such a Lie algebroid structure exists on $A$, then we may say that the Lie algebroid structure on $D \to E$ is \emph{$q$-projectible}.  Thus the definition of an $\LA$--vector bundle may be restated in the following way:
\end{rmk}

\begin{dfn}
  An \emph{$\LA$--vector bundle} is a DVB (\ref{lavb}) equipped with a $q$-projectible Lie algebroid structure on $D \to E$ such that the addition map $+_A: D \times_A D \to D$ is an algebroid morphism.
\end{dfn}

\begin{ex}\label{ex:lavb} {\ }
\begin{enumerate}
\item The DVB (\ref{eqn:te}) is an $\LA$--vector bundle, where $TE$ has the canonical tangent Lie algebroid structure over $E$.
\item Let $A \to M$ be a Lie algebroid.  Then
\begin{equation}  \label{ex:ta}
\xymatrix{T^*A \ar[r] \ar[d] & A^* \ar[d] \\ A \ar[r] & M}
\quad \textrm{and} \quad
\xymatrix{TA \ar[r] \ar[d] & TM \ar[d] \\ A \ar[r] & M}
\end{equation}
are $\LA$--vector bundles, where the Lie algebroid structure on $T^*A = T^*A^* \to A^*$ arises from the Poisson structure on $A^*$, and that on $TA \to TM$ is the \emph{tangent prolongation} of the Lie algebroid structure on $A \to M$.  We remark that the latter is in fact a double Lie algebroid and thus may be viewed as $\LA$--vector bundle in two different ways.  To avoid confusion, we will always present $\LA$--vector bundles so that the relevant Lie algebroid structures are on the horizontal sides.

In \S\ref{sec:genconns}, we will see that $\LA$--vector bundles may be viewed as higher representation of Lie algebroids.  From this point of view, the $\LA$--vector bundles in  \eqref{ex:ta} will play the roles of the coadjoint and the adjoint representation of $A$.
\end{enumerate}
\end{ex}

\subsection{\texorpdfstring{$\VB$}{VB}-algebroids}\label{sec:vba}
There is an alternative set of compatibility conditions for the Lie algebroid and vector bundle structures of (\ref{lavb}). Recall that the spaces of linear and core sections are denoted by $\Gamma_\ell(D,E)$ and $\Gamma_C(D,E)$, respectively.

\begin{dfn}\label{dfn:vbla}
A \emph{$\VB$--algebroid} is a DVB as in (\ref{lavb}), equipped with a Lie algebroid structure on $D \to E$ such that the anchor map $\rho_D: D \to TE$ is a bundle morphism over $A \to TM$ and where the bracket $[\cdot, \cdot]_D$ is such that
\begin{enumerate}
\item $[\Gamma_\ell(D,E), \Gamma_\ell(D,E)]_D \subseteq \Gamma_\ell(D,E)$,
\item $[\Gamma_\ell(D,E), \Gamma_C(D,E)]_D \subseteq \Gamma_C(D,E)$,
\item $[\Gamma_C(D,E), \Gamma_C(D,E)]_D = 0$.
\end{enumerate}
\end{dfn}
\begin{rmk}
Since the anchor map $\rho_D$ is automatically linear over $E$, the condition that it be linear over $A$ is equivalent to asking that $\rho_D$ be a morphism of DVBs from \eqref{lavb} to \eqref{eqn:te}.
\end{rmk}

Given a $\VB$--algebroid (\ref{lavb}), there is a unique map $\rho_A : A \to TM$ such that the diagram 
\begin{equation}\label{anchordiag}
\xymatrix{D \ar^{\rho_D}[r] \ar_{q^D_A}[d] & TE \ar^{Tq^E}[d] \\ A \ar^{\rho_A}[r] & TM}
\end{equation}
commutes, and $\rho_A$ is necessarily linear over $M$.  Furthermore, a bracket $[\cdot,\cdot]_A$ on $\Gamma(A)$ may be defined by the property that, if $X$ and $Y$ in $\Gamma_\ell(D,E)$ are $q$-projectible to $X_0$ and $Y_0$, repectively, then $[X,Y]_D$ is $q$-projectible to $[X_0,Y_0]_A$.  The map $\rho_A$ and the bracket $[\cdot, \cdot]_A$ together form a Lie algebroid structure on $A \to M$.  We leave the details as an exercise for the reader.

\begin{rmk}\label{rmk:gammagrade}
To provide some motivation for the the bracket conditions in Definition \ref{dfn:vbla}, we consider the DVB of Example \ref{ex:te}.  In this case, the Euler vector field $\varepsilon$ on $E$ induces a grading on the space of vector fields on $E$, where $X \in \vect(E)$ is homogeneous of degree $p$ if $[\varepsilon, X] = pX$.  Then the linear vector fields (which are the elements of $\Gamma_\ell(TE,E)$) are precisely those of degree $0$, and the fibrewise-constant vertical vector fields (which are the elements of $\Gamma_C(TE,E)$) are precisely those of degree $-1$.  In this example, the bracket conditions simply state that the Lie bracket respects the grading of vector fields.  The interpretation of the bracket conditions in terms of a grading is carried out in the general case in \S\ref{sec:diff}.
\end{rmk}

\subsection{Equivalence of \texorpdfstring{$\LA$}{LA}-vector bundles and \texorpdfstring{$\VB$}{VB}-algebroids}\label{sec:equivalence}

$\LA$--vector bundles and $\VB$--algebroids are both specified by the same type of data---a DVB of the form (\ref{lavb}), where $D \to E$ is equipped with a Lie algebroid structure satisfying certain compatibility conditions.  Both set of compatibility conditions imply that $A \to M$ is also a Lie algebroid.  The compatibility conditions for $\LA$--vector bundles require that the vertical vector bundles respect the horizontal Lie algebroid, whereas the compatibility conditions for $\VB$--algebroids require that the horizontal Lie algebroid respects the vertical vector bundles.  The following theorem states that the two sets of compatibility conditions are equivalent.

\begin{thm}\label{thm:lavbvbla}
A double vector bundle of the form (\ref{lavb}), where the top side is equipped with a Lie algebroid structure, satisfies the $\LA$--vector bundle compatibility conditions if and only if it satisfies the $\VB$--algebroid compatibility conditions.
\end{thm}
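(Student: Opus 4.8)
The plan is to route both sets of conditions through a single reformulation that is intrinsic to the Lie algebroid $D \to E$, namely \emph{homogeneity} with respect to the grading that distinguishes linear from core sections. I work throughout in a decomposition $D \iso A \oplus E \oplus C$ with the coordinates and frame $\{A_i, C_i\}$ of Remark \ref{rmk:coords}, and I use the homothety $m_\lambda \colon D \to D$ of the vertical bundle $D \to A$, which scales the $E$- and $C$-coordinates and therefore covers the scalar multiplication $h_\lambda \colon E \to E$. This $m_\lambda$ induces a grading on $\Gamma(D,E)$ which generalizes Remark \ref{rmk:gammagrade}: reading off \eqref{eqn:locallinear}, the linear sections are exactly those of degree $0$ and the core sections exactly those of degree $-1$, and there are no nonzero homogeneous sections in degrees below $-1$. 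I will call the Lie algebroid structure on $D \to E$ \emph{homogeneous} if $\rho_D$ and $[\cdot,\cdot]_D$ preserve this grading, equivalently if every $m_\lambda$ is a Lie algebroid endomorphism of $D \to E$ over $h_\lambda$.

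First I would show that homogeneity is equivalent to the $\VB$--algebroid conditions; this direction is essentially bookkeeping with Lemma \ref{lemma:lcbundle}. Preservation of degrees $0$ and $-1$ by $\rho_D$ says exactly that $\rho_D$ carries linear sections to linear vector fields and core sections to core (fibrewise-constant vertical) vector fields on $E$; by Lemma \ref{lemma:lcbundle} applied to $\rho_D \colon D \to TE$ this is precisely the requirement that $\rho_D$ be a morphism of double vector bundles, i.e.\ the anchor condition of Definition \ref{dfn:vbla}. Degree-preservation of the bracket unwinds into the three bracket conditions: the bracket of two degree-$0$ sections again has degree $0$ is condition (1), degree-$0$ with degree-$(-1)$ gives degree $-1$ is condition (2), and degree-$(-1)$ with degree-$(-1)$ lands in degree $-2=0$ is condition (3). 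Since the bracket and anchor are determined on the frame $\{A_i,C_i\}$ by Leibniz, it suffices to test there, so homogeneity is equivalent to Definition \ref{dfn:vbla}, with no reference yet to any structure on $A$.

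The harder equivalence is that homogeneity coincides with the $\LA$--vector bundle conditions, that $q^D_A$ and $+_A$ be Lie algebroid morphisms. Conceptually, the vertical vector bundle structure on $D \to A$ is encoded, through the homothety/Euler vector field description of Grabowski--Rotkiewicz, entirely by the family $\{m_\lambda\}$; saying that $+_A$ together with the vertical zero section is an algebroid morphism is then the same as saying that each $m_\lambda$ is an algebroid endomorphism of $D \to E$, which is homogeneity (and $q^D_A = m_0$ being a morphism produces the induced algebroid structure on $A$). For a reader preferring to avoid this machinery, one may argue directly: since $q^E$ and $+_E$ are surjective submersions, the morphism conditions for $q^D_A$ and $+_A$ may be tested on the generating linear and core sections, where $q^D_A$-compatibility forces the $A$-components of $[A_i,A_j]_D$, $[A_i,C_j]_D$, $[C_i,C_j]_D$ and of the anchors to be $q$-projectible with the correct base data, while $+_A$-compatibility forces the remaining $C$-components (and the vertical parts of the anchors) to be, respectively, linear, constant, and zero in the fibre coordinate of $E$ --- which is again exactly homogeneity. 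One checks along the way that the two resulting Lie algebroid structures on $A$ agree.

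The main obstacle is this last translation, the equivalence between ``$+_A$ is an algebroid morphism'' and homogeneity of the bracket. The difficulty is that the condition on $+_A$ is phrased in terms of the fibre-product algebroid $D \times_A D \to E \times_M E$ induced by $q^D_A$, and so does not, on its face, mention brackets of linear and core sections at all; the work lies in relating sections of $D \times_A D$ to pairs of sections of $D$ and unwinding the morphism condition into the statement that the structure functions of $[\cdot,\cdot]_D$ are homogeneous of the correct degree in the fibre coordinate of $E$. This is the one genuinely computational step, best handled in the coordinates of Remark \ref{rmk:coords}. Once it is carried out, both the $\VB$--algebroid and the $\LA$--vector bundle conditions are seen to be equivalent to homogeneity, and hence to each other.
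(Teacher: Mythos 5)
Your first equivalence --- that the $\VB$--algebroid conditions of Definition \ref{dfn:vbla} amount to homogeneity of $\rho_D$ and $[\cdot,\cdot]_D$ with respect to the grading in which linear sections sit in degree $0$ and core sections in degree $-1$ --- is correct and is essentially the content of Remark \ref{rmk:gammagrade} together with Lemma \ref{lemma:lcbundle}; this half of your argument is fine. The gap is in the second equivalence, which you yourself flag as ``the main obstacle'' and then do not carry out. The assertion that ``$+_A$ is a Lie algebroid morphism if and only if each homothety $m_\lambda$ is a Lie algebroid endomorphism of $D \to E$'' is not a formality you can outsource to the homothety description of vector bundles: the cited work of Grabowski--Rotkiewicz characterizes the \emph{vector bundle} structure by its homotheties, but it does not give you for free that compatibility of a Lie algebroid structure with $+_A$ is equivalent to compatibility with the $m_\lambda$. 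Even the easy direction ($+_A$ a morphism $\Rightarrow$ $m_\lambda$ a morphism) needs the diagonal $D \to D \times_A D$ to be an algebroid morphism plus a density argument in $\lambda$, and the direction you actually need (homogeneity $\Rightarrow$ $+_A$ is a morphism) is precisely the theorem being proved. So the conceptual route either begs the question or rests on an unproven general principle.

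The ``direct'' route you sketch is the right one, but the step you defer is exactly where the work lies, and it is not a routine frame computation. The morphism condition for $+_A$ lives on the fibre-product algebroid $D^{(2)} = D \times_A D \to E^{(2)}$, so before you can ``test on generating sections'' you must (i) identify which sections of $D^{(2)} \to E^{(2)}$ are $+$-projectible, and (ii) express that space in terms of lifts of sections of $D \to E$. The paper does this by introducing the lifts $X^{(2)}$, $\alpha^+$, $\alpha^-$ and showing in coordinates that the $+$-projectible sections are spanned by these with coefficient rings that differ between the summands (functions pulled back from $E$ via $+$ versus arbitrary functions on $E^{(2)}$) --- a distinction that matters for the reduction to linear and core sections and that your sketch does not engage with. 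It then needs the further observation (Lemma \ref{lemma:lincore}) that linearity and coreness of a section are themselves detected by $+$-projectibility of its lifts, which is what finally converts the $+_A$-morphism condition into the three bracket conditions. Until you supply the analogue of Lemmas \ref{lemma:qbracket}, \ref{lemma:plusbracket}, and \ref{lemma:lincore}, your argument establishes only that the $\VB$--algebroid conditions are equivalent to homogeneity, not that either is equivalent to the $\LA$--vector bundle conditions.
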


The proof of Theorem \ref{thm:lavbvbla} is given in Appendix \ref{app:sth}.

\begin{rmk}
 Since the notions of $\LA$--vector bundle and $\VB$--algebroid are equivalent, we could at this point discontinue the use of the term ``$\VB$--algebroid'' in favor of the previously-established term ``$\LA$--vector bundle''.  However, we will see that the constructions in \S\ref{sec:genconns} that form the heart of this paper directly utilize the conditions in Definition \ref{dfn:vbla}; in other words, this paper relies in an essential way on the $\VB$--algebroid point of view.  For this reason, we will continue to use the term ``$\VB$--algebroid''.
\end{rmk}

\begin{rmk}
  In the language of category theory, an $\LA$--vector bundle is essentially a vector bundle object in the category of Lie algebroids, and a $\VB$--algebroid is essentially a Lie algebroid object in the category of vector bundles.  In this sense, Theorem \ref{thm:lavbvbla} is an analogue of the following category-theoretic result: if $X$ and $Y$ are  algebraic categories, then an $X$ object in the category of $Y$ is equivalent to a $Y$ object in the category of $X$.  See, for instance, \cite{cwm}.
\end{rmk}

\subsection{Poisson double vector bundles}\label{sec:dlp}

Again, consider a double vector bundle of the form (\ref{lavb}), where $D \to E$ is a Lie algebroid.  If we dualize $D$ over $E$, we obtain a new DVB
\begin{equation}\label{dualdvb}
\xymatrix{D\starover{E} \ar[r] \ar[d] & E \ar[d] \\ C^* \ar[r] & M},
\end{equation}
where the core is $A^*$.  For a discussion on the dualization of DVBs, see \cite{mac:duality}.

The algebra of functions on $D\starover{E}$ has a canonical double-grading; we denote by $C^\infty_{p,q}(D\starover{E})$ the space of functions that are homogeneous of degrees $p$ and $q$ over $E$ and $C^*$, respectively.  The space $C^\infty_{1,\bullet}(D\starover{E})$ of functions that are linear over $E$ may be identified with $\Gamma(D,E)$, and it is clear from the coordinate description of Remark \ref{rmk:coords} that, under this identification, we have $\Gamma_\ell(D,E) = C^\infty_{1,1}(D\starover{E})$ and $\Gamma_C(D,E) = C^\infty_{1,0}(D\starover{E})$.  Note that the grading on $\Gamma(D,E)$ that is induced from the identification with $C^\infty_{1,\bullet}(D\starover{E})$ is not the same as the grading described in Remark \ref{rmk:gammagrade}, but is shifted by $1$.

As usual, the Lie algebroid structure on $D \to E$ induces a Poisson structure on $D\starover{E}$ that is linear over $E$, in the sense that $\{C^\infty_{p,\bullet}(D\starover{E}),C^\infty_{p',\bullet}(D\starover{E})\} \subseteq C^\infty_{p+p'-1,\bullet}(D\starover{E})$.  It is then fairly easy to see that the compatibility conditions of Definition \ref{dfn:vbla} are equivalent to the condition that $\{C^\infty_{\bullet,q}(D\starover{E}),C^\infty_{\bullet,q'}(D\starover{E})\} \subseteq C^\infty_{\bullet,q+q'-1}(D\starover{E})$.  In other words, we have

\begin{thm}\label{thm:dlp}
A double vector bundle of the form (\ref{lavb}), where the top side is equipped with a Lie algebroid structure, satisfies the $\VB$--algebroid compatibility conditions if and only if the induced Poisson structure on $D\starover{E}$ is linear over $C^*$.
\end{thm}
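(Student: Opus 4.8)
The plan is to interpret both conditions as statements about the bidegree of the Poisson bivector $\pi$ on $D\starover{E}$, and to reduce the verification to a computation on a small generating set. First I would recall the standard dictionary relating the Lie algebroid $D\to E$ to the linear Poisson structure on its dual: for $X,Y\in\Gamma(D,E)=C^\infty_{1,\bullet}(D\starover{E})$ one has $\{X,Y\}=[X,Y]_D$; for $X\in\Gamma(D,E)$ and $\phi\in C^\infty(E)=C^\infty_{0,\bullet}(D\starover{E})$ one has $\{X,\phi\}=\rho_D(X)\phi$; and $\{\phi,\psi\}=0$ for $\phi,\psi\in C^\infty(E)$. These identities already encode the hypothesis that $\pi$ is linear over $E$, and they convert every Poisson bracket of low-degree functions into a bracket or an anchor of sections.

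Next I would observe that, as a bigraded commutative algebra, $C^\infty(D\starover{E})$ is generated by its pieces of lowest bidegree, namely $C^\infty_{0,0}=C^\infty(M)$, $C^\infty_{0,1}=\Gamma(E^*)$, $C^\infty_{1,0}=\Gamma_C(D,E)$, and the purely-core part $\Gamma(A^*)$ of $C^\infty_{1,1}$; in a decomposition these correspond to the four families of fibre coordinates on $D\starover{E}$ obtained by dualizing those of Remark \ref{rmk:coords} over $E$. Since the Poisson bracket is a biderivation in each slot, the requirement $\{C^\infty_{\bullet,q},C^\infty_{\bullet,q'}\}\subseteq C^\infty_{\bullet,q+q'-1}$ — that is, homogeneity of $\pi$ of degree $-1$ in the $C^*$-grading — holds on all of $C^\infty(D\starover{E})$ if and only if it holds on each pair of these generators. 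This reduction is the crux; what remains is a finite case check.

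I would then sort the generator pairs into three types. A pair of two functions on $E$ brackets to zero and imposes nothing. A pair consisting of a section and a function on $E$ produces $\rho_D(\text{section})$ applied to that function; using the grading of Remark \ref{rmk:gammagrade}, landing in the prescribed $C^*$-degree is precisely the statement that $\rho_D$ carries linear sections to linear (degree $0$) vector fields and core sections to fibrewise-constant vertical (degree $-1$) vector fields on $E$ — equivalently, that $\rho_D$ is a morphism of DVBs into $TE$, which is the anchor condition of Definition \ref{dfn:vbla}. A pair of two sections yields $\{\Gamma_C,\Gamma_C\}$, $\{\Gamma_C,\Gamma_\ell\}$, and $\{\Gamma_\ell,\Gamma_\ell\}$, which by the first step are the Lie brackets $[\cdot,\cdot]_D$; their homogeneity of $C^*$-degree $-1$, $0$, and $1$ — i.e. landing in $C^\infty_{1,-1}=0$, $C^\infty_{1,0}=\Gamma_C$, and $C^\infty_{1,1}=\Gamma_\ell$ — is exactly bracket conditions (3), (2), and (1). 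The converse implication requires no biderivation argument at all: restricting the hypothesis that $\pi$ is linear over $C^*$ to these same homogeneous pairs returns the anchor condition and the three bracket conditions one by one.

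The step I expect to be the main obstacle is the bookkeeping of the second paragraph: justifying that homogeneity of $\pi$ may be tested on generators, that the four listed families generate, and — in the mixed section/function pairs — that the exact $C^*$-degree of $\rho_D(\text{section})\phi$ matches the geometric statement that $\rho_D$ preserves the core and linear filtrations. Once the generating set and the biderivation reduction are in place, each of the finitely many cases is a one-line degree count.
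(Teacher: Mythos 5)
Your argument is correct and is essentially the paper's own proof: the paper identifies $\Gamma_\ell(D,E)=C^\infty_{1,1}(D\starover{E})$ and $\Gamma_C(D,E)=C^\infty_{1,0}(D\starover{E})$ and then simply asserts that the compatibility conditions of Definition \ref{dfn:vbla} are equivalent to $\{C^\infty_{\bullet,q},C^\infty_{\bullet,q'}\}\subseteq C^\infty_{\bullet,q+q'-1}$, so your biderivation-plus-generators reduction and the ensuing case check supply exactly the details the paper leaves to the reader. (One trivial slip: the fibrewise-linear functions on the core $A^*$ of $D\starover{E}$ form $\Gamma(A)$, not $\Gamma(A^*)$.)
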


Mackenzie \cite{mackenzie-2006} has defined a \emph{Poisson double vector bundle} to be a DVB whose total space is equipped with a Poisson structure that is linear over both side bundles.  Theorem \ref{thm:dlp} states that there is a correspondence between Poisson double vector bundles and $\VB$--algebroid structures.  We note that this result was already established in \cite{mackenzie-2006}.

An interesting feature of Poisson double vector bundles is that the definition is symmetric with respect to the roles of the two side bundles.  On the other hand, the correspondence of Theorem \ref{thm:dlp} is not symmetric, which implies that there are in fact two different $\VB$--algebroid structures associated to each Poisson double vector bundle.  Using both correspondences, we are able to associate a $\VB$--algebroid structure on $D$ with a $\VB$--algebroid structure on $(D\starover{E})\starover{C^*}$; the latter is canonically isomorphic to $D\starover{A}$, so we obtain the following duality result: 
\begin{cor}
A $\VB$--algebroid structure on (\ref{lavb}) induces a dual $\VB$--algebroid structure on
\begin{equation}\label{dualdvb2}
\dvb{D\starover{A}}{C^*}{A}{M}.
\end{equation}
\end{cor}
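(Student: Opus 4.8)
The plan is to deduce the corollary from Theorem \ref{thm:dlp} by applying it twice, exploiting the symmetry of the notion of a double linear Poisson vector bundle. First I would feed the given $\VB$--algebroid structure on $D \to E$ into Theorem \ref{thm:dlp}: the Lie algebroid $D \to E$ induces, via the standard Lie--Poisson correspondence, a Poisson structure on $D\starover{E}$ that is linear over the side bundle $E$, and by the content of the theorem the $\VB$--algebroid axioms are equivalent to this Poisson structure being linear over the other side bundle $C^*$ as well. Thus $D\starover{E}$ becomes a double linear Poisson vector bundle, with side bundles $E$ and $C^*$ and core $A^*$.

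The crucial point is that the definition of a double linear Poisson vector bundle is symmetric in its two side bundles, so the same Poisson structure on $D\starover{E}$ may be read off relative to $C^*$ rather than $E$. I would then run Theorem \ref{thm:dlp} in reverse with the roles of the two side bundles interchanged. Linearity over $C^*$ means that dualizing $D\starover{E}$ over $C^*$ produces a Lie algebroid structure on $(D\starover{E})\starover{C^*} \to C^*$. The core of this new double vector bundle is $E^*$, so the $\VB$--algebroid compatibility condition for it is---again by Theorem \ref{thm:dlp}---linearity of the induced Poisson structure on $((D\starover{E})\starover{C^*})\starover{C^*} = D\starover{E}$ over the dual core $E$, and this is exactly the residual linearity we already have in hand. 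This yields a $\VB$--algebroid structure on $(D\starover{E})\starover{C^*}$, whose side bundles are $C^*$ and $A$ and whose core is $E^*$.

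It remains to transport this structure across the canonical identification $(D\starover{E})\starover{C^*} \cong D\starover{A}$. Here I would appeal to the double-duality theory of double vector bundles from \cite{mac:duality}: iterated dualization over the two vector bundle structures returns a double vector bundle with the same side bundles and core. Tracking the effect of dualizing $D$ first over $E$ and then over $C^*$ shows that the side bundles $C^*$, $A$ and core $E^*$ agree with those of $D\starover{A}$, the identification being canonical but acting as $-1$ on the core $E^*$, as recorded in the footnote. Pushing the $\VB$--algebroid structure through this isomorphism produces the asserted dual $\VB$--algebroid structure on \eqref{dualdvb2}.

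The step demanding the most care is the last one. The two invocations of Theorem \ref{thm:dlp} are essentially formal once the symmetry of double linear Poisson structures is granted, but one must verify that the canonical isomorphism $(D\starover{E})\starover{C^*} \cong D\starover{A}$ really does intertwine the full double-vector-bundle structures and correctly account for the sign on the core $E^*$, and confirm that the $\VB$--algebroid axioms are stable under such an isomorphism. The bookkeeping of which bundle becomes a side and which becomes the core under each dualization---and of the accompanying duals---is where an error is most likely to creep in.
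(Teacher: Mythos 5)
Your proposal is correct and follows essentially the same route as the paper: apply Theorem \ref{thm:dlp} to obtain the double linear Poisson structure on $D\starover{E}$, exploit the symmetry of that notion in the two side bundles to run the correspondence the other way and get a $\VB$--algebroid structure on $(D\starover{E})\starover{C^*}$, and transport it through the canonical (sign-twisted on the core $E^*$) isomorphism with $D\starover{A}$. The paper gives exactly this argument, only more tersely, in the paragraph preceding the corollary.
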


\begin{rmk}
  In general, a DVB \eqref{lavb} and its two neighbors \eqref{dualdvb} and \eqref{dualdvb2} fit together to form a triple vector bundle 
\begin{equation}\label{eq:cube}
  \xymatrix@=5pt{ T^*D \ar[rr] \ar[dd] \ar[rd] && D^*_E \ar[rd] \ar'[d][dd] \\ & D \ar[rr] \ar[dd] && E \ar[dd] \\ D^*_A \ar[rd] \ar'[r][rr] && C^* \ar[rd] \\ & A \ar[rr] && M}
\end{equation}
If \eqref{lavb} is a $\VB$--algebroid, then the triple vector bundle \eqref{eq:cube} has the following structures:
\begin{itemize}
  \item The four horizontal edges are Lie algebroids.
\item The right and left faces are Poisson double vector bundles.
\item The other four faces are $\VB$--algebroids.
\end{itemize}
All of the above structures and their relations may be summarized as follows: the cube \eqref{eq:cube} is a Lie algebroid object in the category of Poisson double vector bundles.
\end{rmk}

\begin{ex}Let $E \to M$ be a vector bundle.  The $\VB$--algebroid (\ref{eqn:te}) is associated to the Poisson double vector bundle
\begin{equation*}
\xymatrix{T^*E \ar[r] \ar[d] & E \ar[d] \\ E^* \ar[r] & M},
\end{equation*}
from which we may obtain the dual $\VB$--algebroid
\begin{equation*}
\xymatrix{TE^* \ar[r] \ar[d] & E^* \ar[d] \\ TM \ar[r] & M}.
\end{equation*}
\end{ex}

\begin{ex}Let $A \to M$ be a Lie algebroid.  The $\VB$--algebroid $T^\star A$ in \eqref{ex:ta}
is associated to the Poisson double vector bundle
\begin{equation*}
\xymatrix{TA^* \ar[r] \ar[d] & A^* \ar[d] \\ TM \ar[r] & M},
\end{equation*}
and is dual to the $\VB$--algebroid $TA$ in \eqref{ex:ta}.
\end{ex}

\subsection{Compatibility in terms of algebroid differentials}\label{sec:diff}

Once again, consider a double vector bundle of the form (\ref{lavb}), where $D \to E$ is a Lie algebroid.  We may identify $\Gamma(D\starover{E}, E)$ with the space of functions on $D$ that are linear over $E$.  Furthermore, this identification gives $\Gamma(D\starover{E}, E)$ a grading according to polynomial degree over $A$, and this grading may be extended to $\bigwedge \Gamma(D\starover{E}, E)$.  We denote by $\Omega^{p,q}(D)$ the subspace of $\bigwedge^p \Gamma(D\starover{E}, E)$ consisting of those $p$-forms that are of degree $q$ over $A$.

Recall that in \S\ref{sec:dlp}, the space of sections $\Gamma(D,E)$ was given a grading, where the linear sections were of degree $1$ and the core sections were of degree $0$.  The gradings on $\Gamma(D,E)$ and $\Omega(D)$ agree up to a shift, in the sense that, for a degree $q$ section $X \in \Gamma(D,E)$, the operator $\iota_X$ on $\Omega(D)$ is of degree $q - 1$.

The Lie algebroid structure on $D \to E$ induces a differential $d_D$ on $\bigwedge \Gamma(D\starover{E},E)$.

\begin{thm}\label{thm:diff}
A double vector bundle of the form (\ref{lavb}), where the top side is equipped with a Lie algebroid structure, satisfies the $\VB$--algebroid compatibility conditions if and only if $d_D$ is of degree $0$ with respect to the ``over $A$  grading''.
\end{thm}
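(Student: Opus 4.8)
The plan is to reduce everything to the Chevalley--Eilenberg (Koszul) formula for the Lie algebroid differential $d_D$ of $D \to E$ and to track the ``over $A$'' degree through it. Since $d_D$ is a derivation of the exterior algebra $\bigwedge\Gamma(D\starover{E},E)$, and this algebra is generated by $\Omega^0(D) = C^\infty(E)$ and $\Omega^1(D) = \Gamma(D\starover{E},E)$, its homogeneous components with respect to the over-$A$ grading (which is multiplicative, being induced by the fibrewise scalar action of $D \to A$) are again derivations; hence $d_D$ is of degree $0$ if and only if it preserves the over-$A$ degree on generators, i.e.\ on $\Omega^0(D)$ and $\Omega^1(D)$. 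I will establish two equivalences: (i) $d_D$ preserves the over-$A$ degree on $\Omega^0(D)$ iff the anchor condition of Definition \ref{dfn:vbla} holds; and (ii), \emph{granting} the anchor condition, $d_D$ preserves the over-$A$ degree on $\Omega^1(D)$ iff the three bracket conditions of Definition \ref{dfn:vbla} hold.

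First I would record the degrees of the relevant objects. Under the over-$A$ grading, a function $f \in C^\infty(E) = \Omega^0(D)$ is homogeneous of degree $q$ precisely when it is homogeneous of degree $q$ in the fibre directions of $E \to M$; a vector field on $E$ is of degree $0$ iff it is linear and of degree $-1$ iff it is a fibrewise-constant vertical field (these are exactly $\Gamma_\ell(TE,E)$ and $\Gamma_C(TE,E)$, cf.\ Remark \ref{rmk:gammagrade}). Combined with the given fact that $\iota_X$ has degree $q-1$ when $X \in \Gamma(D,E)$ has degree $q$ (so linear sections act in degree $0$ and core sections in degree $-1$), these assignments make the contraction pairing $\Omega^1(D) \times \Gamma(D,E) \to C^\infty(E)$ into a graded-nondegenerate pairing: a section $Z$ is homogeneous of degree $r$ iff $\iota_Z\omega$ is homogeneous of degree $q+r-1$ for every homogeneous $\omega \in \Omega^{1,q}(D)$.

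For (i) I would use $(d_D f)(X) = \rho_D(X)f$. Homogeneity of $d_Df$ in degree $q_f$ is equivalent, via contraction, to $\rho_D(X)f$ having degree $q_f + q_X - 1$ for all homogeneous $X$; taking $X$ linear (degree $1$) and then core (degree $0$), this says exactly that $\rho_D$ sends linear sections to linear (degree $0$) vector fields and core sections to fibrewise-constant vertical (degree $-1$) vector fields. By Lemma \ref{lemma:lcbundle}, this is precisely the statement that $\rho_D$ is a morphism of DVBs into $TE$, i.e.\ the anchor condition. For (ii) I would expand $(d_D\omega)(X,Y) = \rho_D(X)\omega(Y) - \rho_D(Y)\omega(X) - \omega([X,Y]_D)$ on homogeneous $\omega, X, Y$. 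Granting the anchor condition, the first two terms automatically have degree $q_\omega + q_X + q_Y - 2$, so $d_D\omega$ has degree $q_\omega$ iff the last term $\iota_{[X,Y]_D}\omega$ has that same degree for all homogeneous $\omega$ --- which, by the graded nondegeneracy above, holds iff $[X,Y]_D$ has degree $q_X + q_Y - 1$. Running over $X, Y \in \Gamma_\ell(D,E) \cup \Gamma_C(D,E)$, this degree count reproduces exactly conditions (1)--(3) of Definition \ref{dfn:vbla}: linear--linear brackets land in degree $1$ (linear), linear--core in degree $0$ (core), and core--core in the empty degree $-1$, forcing the bracket to vanish. Combining (i) and (ii) with the generator/derivation reduction yields the theorem.

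The bookkeeping of the over-$A$ grading is where I expect the only real friction. The main point requiring care is the graded nondegeneracy of the contraction pairing, which is what lets me pass, in the backward direction, from the degree of $\omega([X,Y]_D)$ to the degree of $[X,Y]_D$ itself; once homogeneous sections are known to be detected by their contractions against $\Omega^1(D)$, the rest is a routine comparison of degrees in the Koszul formula, which I would carry out in the coordinates of Remark \ref{rmk:coords} if any ambiguity remained. An alternative I would keep in reserve is to deduce the statement from the already-established Theorem \ref{thm:dlp} by dualizing $D$ over $E$: the differential $d_D$ is the Chevalley--Eilenberg dual of the linear Poisson structure on $D\starover{E}$, and the over-$A$ degree on $\Omega(D)$ is dual to the over-$C^*$ degree on $C^\infty(D\starover{E})$, so $d_D$ being of degree $0$ ought to correspond to the Poisson bracket being of degree $-1$ over $C^*$; however, making this duality of gradings precise seems no shorter than the direct computation.
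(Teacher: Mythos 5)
Your argument is correct and follows essentially the same route as the paper's own proof: both reduce the anchor condition to Lemma \ref{lemma:lcbundle}, translate the bracket conditions into degree statements about $\iota_{[X,Y]_D}$, and then run the degree count through the Koszul formula \eqref{eqn-diff}. The only difference is that you are more explicit about the converse direction (the graded nondegeneracy of the contraction pairing and the reduction to generators of the exterior algebra), which the paper leaves implicit.
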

\begin{proof} 
Applying Lemma \ref{lemma:lcbundle} to the anchor map $\rho_D$, we have that the compatibility condition for the anchor is equivalent to the condition that, for a degree $q$ section $X \in \Gamma(D,E)$, the degree of $\rho_D (X)$ is $q-1$ as an operator on $C^\infty(E)$.  Additionally, the compatibility conditions for the bracket are equivalent to the condition that, for sections $X$ and $Y$ of degrees $q$ and $q'$, respectively, the degree of $\iota_{[X,Y]_D}$ is $q+q'-2$.

For $\omega \in \Omega^{p,q}(D)$ and $X_i \in \Gamma(D,E)$ of degree $q_i$ for $i=0,\dots,p$, the differential $d_D$ is given by the formula
\begin{equation}\label{eqn-diff}
\begin{split}
\iota_{X_p} \cdots \iota_{X_0} d_D \omega =& \sum_{i=0}^p (-1)^i \rho_D(X_i)\iota_{X_p} \cdots \hat{\iota_{X_i}} \cdots \iota_{X_0} \omega \\
& + \sum_{j>i} (-1)^j \iota_{X_p} \cdots \iota_{[X_i,X_j]_D} \hat{\iota_{X_j}} \cdots \hat{\iota_{X_i}} \cdots \iota_{X_0} \omega,
\end{split}
\end{equation}
Each term on the right hand side is of degree $q + \sum (|q_i|-1)$.  The left hand side must be of the same degree, which implies that $d_D$ is of degree $0$.
\end{proof}

\subsection{Supergeometric interpretation}\label{sec:supermfld}

It was observed by Vaintrob \cite{vaintrob} that the differential point of view for a Lie algebroid is more naturally stated in the language of supergeometry in the following way: a Lie algebroid structure on $A \to M$ is equivalent to a degree $1$ homological vector field on the graded manifold $A[1]$.  Here, $A[1]$ is the graded manifold whose algebra of ``functions'' is $\bigwedge \Gamma(A^*)$, and the operator $d_A$, as a derivation of this algebra, is viewed as a vector field on $A[1]$.  The modifier \emph{homological} indicates that $d_A^2 = 0$.

In the case of a $\VB$--algebroid \eqref{lavb}, we may form the graded manifold\footnote{Since $D$ is the total space of two different vector bundles, we use the subscript in $[1]_E$ to indicate that we are applying the functor $[1]$ to the vector bundle $D\to E$, as opposed to $D \to A$.} $D[1]_E$, whose algebra of ``functions'' $C^\infty(D[1]_E)$ is $\Omega^{\bullet,\bullet}(D)$.  The operator $d_D$ is viewed as a homological vector field on $D[1]_E$.

The algebra $C^\infty(D[1]_E)$ has a natural double-grading arising from the DVB structure
\begin{equation*}
\xymatrix{D[1]_E \ar[r] \ar[d] & E \ar[d] \\ A[1] \ar[r] & M},
\end{equation*}
and this double-grading coincides with the double-grading of $\Omega(D)$ that was introduced in \S\ref{sec:diff}.  In this point of view, we may use Theorem \ref{thm:diff} to effectively restate the definition of a $\VB$--algebroid as follows:

\begin{thm}
A $\VB$--algebroid structure on a DVB (\ref{dvb}) is equivalent to a vector field $d_D$ on $D[1]_E$ of bidegree $(1,0)$ such that $d_D^2 = 0$.
\end{thm}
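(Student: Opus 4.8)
The plan is to obtain the statement by combining two facts already available in this section: Vaintrob's supergeometric characterization of Lie algebroids, applied now to the vector bundle $D \to E$, and the characterization of the $\VB$--algebroid compatibility conditions given in Theorem \ref{thm:diff}. The key observation is that the graded manifold $D[1]_E$, together with its double grading, is determined by the DVB structure alone, before any Lie algebroid structure on $D \to E$ is chosen: its algebra of functions is $\Omega^{\bullet,\bullet}(D)$, where the first (form) grading $p$ is the cohomological grading arising from the functor $[1]_E$, and the second grading $q$ is the ``over $A$'' grading of \S\ref{sec:diff}. A vector field of bidegree $(1,0)$ is thus one that raises $p$ by one and preserves $q$.

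First I would apply Vaintrob's correspondence to $D \to E$: a Lie algebroid structure on $D \to E$ is the same as a homological vector field on $D[1]_E$ of cohomological degree $1$, i.e.\ of $p$-degree $1$, with square zero, and this assignment is a bijection. This is exactly the content quoted at the start of \S\ref{sec:supermfld}, read for the bundle $D \to E$ rather than for $A \to M$, and the operator it produces is the differential $d_D$. Note that a general degree-$1$ homological vector field decomposes into $q$-homogeneous components of various $q$-degrees; requiring it to have bidegree $(1,0)$ is the additional demand that only the $q$-degree-$0$ component be present.

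It then remains only to match the two conditions. In the forward direction, a $\VB$--algebroid structure is by definition a Lie algebroid structure on $D \to E$ --- hence a degree-$1$ homological $d_D$ --- satisfying the compatibility conditions of Definition \ref{dfn:vbla}; by Theorem \ref{thm:diff} these conditions hold if and only if $d_D$ is of $q$-degree $0$, so $d_D$ is precisely a bidegree-$(1,0)$ vector field with $d_D^2=0$. Conversely, given a vector field $d_D$ on $D[1]_E$ of bidegree $(1,0)$ with $d_D^2=0$, its $p$-degree is $1$ and it squares to zero, so Vaintrob's bijection returns a Lie algebroid structure on $D \to E$; its $q$-degree being $0$ forces, again by Theorem \ref{thm:diff}, the $\VB$--algebroid compatibility conditions. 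Because Vaintrob's correspondence is a bijection and Theorem \ref{thm:diff} is an ``if and only if'', these two passages are mutually inverse, which gives the claimed equivalence.

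The only point requiring care --- and the step I expect to be the main obstacle --- is verifying that the two gradings on $C^\infty(D[1]_E)$ used here are the intended ones and that Vaintrob's bijection is compatible with the double grading, so that ``$p$-degree $1$'' and ``$q$-degree $0$'' may be imposed independently. Concretely, one must check that the cohomological grading coming from $[1]_E$ coincides with the form grading $p$ of \S\ref{sec:diff}, and that $d_D$ carries the $q$-grading in precisely the way Theorem \ref{thm:diff} exploits. Once this bookkeeping is in place no further computation is needed, since all of the analytic content has already been absorbed into Theorem \ref{thm:diff}.
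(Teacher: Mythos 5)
Your proof is correct and follows essentially the same route as the paper: the theorem is obtained by combining Vaintrob's correspondence (a Lie algebroid structure on $D \to E$ is a degree-$1$ homological vector field on $D[1]_E$) with Theorem \ref{thm:diff}, after observing that the double grading on $C^\infty(D[1]_E)$ coming from the DVB structure coincides with the double grading on $\Omega(D)$ from \S\ref{sec:diff}. The ``point requiring care'' you flag is exactly the identification the paper makes explicitly in \S\ref{sec:supermfld}, so nothing is missing.
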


\section{(Super)connections and horizontal lifts}\label{sec:genconns}
One of the main goals of this paper is to show that isomorphism classes of $\VB$--algebroids are in one-to-one correspondence with flat Lie algebroid superconnections up to a certain notion of equivalence.  With this in mind, we can understand a $\VB$--algebroid as a generalization of a Lie algebroid representation.  

Consider a $\VB$--algebroid (\ref{lavb}) with core $C$.  In \S\ref{sec:biga}-\S\ref{sec:bigsideandcore}, we will see that there is a natural Lie algebroid structure on $\hat{A}$, and that $\hat{A}$ possesses natural representations on $C$ and $E$.  In \S\ref{sec:sideandcore}, we use horizontal lifts to obtain $A$-connections on $C$ and $E$; unfortunately, the procedure is not canonical, and the induced connections are not flat.  However, the induced $A$-connections form part of a flat $A$-superconnection on a graded bundle (\S\ref{sec:superconn}).  Although the flat $A$-superconnection is noncanonical, different choices of horizontal lifts lead to superconnections that are equivalent in a way that will be described in \S\ref{sec:doDiad} and will be used in \S\ref{sec:classification} to classify regular $\VB$--algebroids.

\subsection{The fat algebroid} \label{sec:biga}
Consider a $\VB$--algebroid (\ref{lavb}) with core $C$.  As in \S\ref{sec:horizontal}, let $\hat{A}$ denote the vector bundle over $M$ whose space of sections is $\Gamma(\hat{A}) = \Gamma_\ell (D,E)$.  The bundle $\hat{A}$ has a natural Lie algebroid structure with bracket $[\cdot,\cdot]_{\hat{A}}$ and anchor $\rho_{\hat{A}}$ given by
\begin{equation*}
\begin{split}
[X,Y]_{\hat{A}} &= [X,Y]_D \\
\rho_{\hat{A}} (X) &= \rho_A (X_0),
\end{split}
\end{equation*}
where $X$ $q$-projects to $X_0$.  We refer to $\hat{A}$ as the \emph{fat algebroid}.

The projection map $\hat{A} \to A$ is a Lie algebroid morphism, the kernel of which may be identified with $\Hom(E,C)$.  Therefore $\Hom(E,C)$ inherits a Lie algebroid structure so that 
\begin{equation}\label{shortexact-biga}
\xymatrix{0 \ar[r] & \Hom(E,C) \ar^-i[r] & \hat{A} \ar^\pi [r] & A \ar[r] & 0}
\end{equation}
is an exact sequence of Lie algebroids over $M$.  

\begin{ex}\label{ex:fatjet}
When $D = TA$, where $A \to M$ is a Lie algebroid, then $\hat{A}$ is equal to the first jet bundle $J^1 A$ of $A$.  In this case, Crainic and Fernandes \cite{cf} have described natural representations of $J^1 A$ on $A$ and $TM$.  In \S\ref{sec:bigsideandcore}, we will extend this process to all $\VB$--algebroids.
\end{ex}

\subsection{The core-anchor}\label{sec:coreanchor}

To explicitly describe the Lie algebroid structure on $\Hom(E,C)$ inherited from \eqref{shortexact-biga}, it is useful to introduce an auxilliary map.  Since the anchor $\rho_D$ is a morphism of DVBs from
\begin{equation*}
  \dvb{D}{E}{A}{M} \, \mbox{ to } \, \dvb{TE}{E}{TM}{M},
\end{equation*}
it induces a linear map of the core vector bundles.

\begin{dfn}\label{dfn:coreanchor}
  The \emph{core-anchor} $\boundary$ of a $\VB$--algebroid \eqref{lavb} is minus the vector bundle morphism induced by the anchor map $\rho_D$ from the core $C$ of $D$ to the core $E$ of $TE$.
\end{dfn}

The core-anchor $\boundary$ is explicitly given by the equation
\begin{equation*}
\left\langle \boundary\alpha, e\right\rangle = -\rho_D(\alpha)(e)
\end{equation*}
for all $\alpha \in \Gamma(C)$ and $e \in \Gamma(E^*)$.  In other words, since $-\rho_D(\alpha)$ is a fibrewise-constant vertical vector field on $E$, it may be identified with a section $\boundary \alpha$ of $E$.

The map $\boundary$ is $C^\infty(M)$-linear and therefore is an element of $\Hom(C,E)$.  The bracket on $\Hom(E,C)$ is then given by
\begin{equation}\label{eqn:bracketphi}
[\phi, \phi'] = \phi \boundary \phi' - \phi' \boundary \phi
\end{equation}
for $\phi, \phi' \in \Hom(E,C)$.  The anchor is trivial, so $\Hom(E,C)$ is actually a bundle of Lie algebras.

\begin{ex}
  In the $\VB$--algebroid \eqref{ex:ta}, the core-anchor maps $A$ to $TM$ and is equal to minus the anchor of the algebroid $A \to M$.
\end{ex}

\subsection{Side and core representations of \texorpdfstring{$\hat{A}$}{the fat algebroid}}
\label{sec:bigsideandcore}
The fat algebroid has natural representations (i.e.\ flat connections) $\psi^c$ and $\psi^{s^*}$ on $C$ and $E^*$, respectively, given by
\begin{align}
\psi^c_\chi(\alpha) &\defequal [\chi,\alpha]_D, \\
\psi^{s^*}_\chi(e) &\defequal \rho_D(\chi)(e), \label{eqn:psisdual}
\end{align}
for $\chi \in \Gamma(\hat{A})$, $\alpha \in \Gamma(C)$, and $e \in \Gamma(E^*)$.  In \eqref{eqn:psisdual}, we view $e$ as a linear function on $E$.  Since $\rho_D(\chi)$ is a linear vector field on $E$, it acts on the space of linear functions.  

As usual, the representation $\psi^{s^*}$ may be dualized to a representation $\psi^s$ on $E$, given by the equation
\begin{equation*}
\langle \psi^s_\chi(\varepsilon), e \rangle \defequal \rho_{\hat{A}}(\chi)\langle \varepsilon, e\rangle - \langle \varepsilon, \psi^{s^*}_\chi (e)\rangle
\end{equation*}

We leave the following as an exercise.
\begin{prop}\label{prop:reprelations}
The representations $\psi^c$ and $\psi^s$ are related in the following ways:
\begin{enumerate}
\item $\boundary \psi^c_\chi = \psi^s_\chi \boundary$
\item $\phi \psi^s_\chi - \psi^c_\chi \phi = [\phi,\chi]_{\hat{A}}$
\end{enumerate}
for all $\chi \in \Gamma(\hat{A})$ and $\phi \in \Hom(E,C)$.
\end{prop}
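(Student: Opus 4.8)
The plan is to prove each identity by unwinding the definitions of $\psi^c$, $\psi^{s^*}$, $\psi^s$, and the core-anchor $\boundary$, leaning on two structural facts: that the anchor $\rho_D$ is a morphism of Lie algebroids, so it intertwines $[\cdot,\cdot]_D$ with the commutator of vector fields on $E$, and that $\Gamma(D,E)$ is generated over $C^\infty(E)$ by linear and core sections. Both (1) and (2) are $C^\infty(M)$-linear identities between $\Hom(C,E)$- and $\Hom(E,C)$-valued objects, so it suffices to test them on such generators and against linear functions $e \in \Gamma(E^*)$ on $E$; the coordinate normal forms of Remark~\ref{rmk:coords} provide a reliable computational backbone throughout.

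For part (1), both sides are maps $C \to E$, so I would fix $\alpha \in \Gamma(C)$ and pair the equation with an arbitrary $e \in \Gamma(E^*)$, viewed as a linear function on $E$. On the left, $\langle \boundary\psi^c_\chi(\alpha), e\rangle = -\rho_D([\chi,\alpha]_D)(e)$ by Definition~\ref{dfn:coreanchor}. Since $\rho_D$ preserves brackets, $\rho_D([\chi,\alpha]_D) = [\rho_D(\chi),\rho_D(\alpha)]$, and I would expand this commutator acting on $e$. Here $\rho_D(\alpha)$ is a fibrewise-constant vertical vector field, so it sends the linear function $e$ to the pullback of $-\langle\boundary\alpha,e\rangle \in C^\infty(M)$; the linear vector field $\rho_D(\chi)$ sends linear functions to linear functions via $\psi^{s^*}_\chi$ and restricts on pullbacks from $M$ to $\rho_A(\chi_0) = \rho_{\hat{A}}(\chi)$. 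Substituting these into the expanded commutator and negating yields exactly $\rho_{\hat{A}}(\chi)\langle\boundary\alpha,e\rangle - \langle\boundary\alpha,\psi^{s^*}_\chi(e)\rangle$, which is the defining formula for $\langle\psi^s_\chi(\boundary\alpha),e\rangle$. This establishes (1).

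For part (2), I would first record the conceptual content: the exact sequence (\ref{shortexact-biga}) realizes $\Hom(E,C)$ as an ideal of $\hat{A}$, so the right-hand side $[\phi,\chi]_{\hat{A}}$ is (minus) the adjoint action of $\hat{A}$ on this ideal, while the left-hand side $\phi\psi^s_\chi - \psi^c_\chi\phi$ is precisely the tensor-product action of $\hat{A}$ on $\Hom(E,C) = E^* \otimes C$ induced by $\psi^{s^*}$ on $E^*$ and $\psi^c$ on $C$. Thus (2) asserts that these two actions agree. Concretely, I would compute $[i(\phi),\chi]_D$ as a vertical linear section and extract its $\Hom(E,C)$-component. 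The key lemmas are the actions of the embedded elements, $\psi^c_{i(\phi)} = \phi\boundary$ and $\psi^s_{i(\phi)} = \boundary\phi$, which follow from the definition of $\boundary$ together with the bracket formula (\ref{eqn:bracketphi}); granting these, the Jacobi identity applied to $[[\phi,\chi]_{\hat{A}},\alpha]_D$ reduces the claim to relation (1) and closes the argument. Alternatively, the whole identity can be verified directly in the coordinates of Remark~\ref{rmk:coords}.

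The main obstacle is that $\boundary$ may be degenerate, so an element $\phi \in \Hom(E,C)$ is generally \emph{not} recoverable from its representations $\psi^c_{i(\phi)} = \phi\boundary$ and $\psi^s_{i(\phi)} = \boundary\phi$; consequently one cannot prove (2) merely by comparing the induced connections, and must instead track genuine sections of $D$ — most safely in coordinates. The accompanying bookkeeping — the signs in the core-anchor pairing, the two competing linear structures on $D$, and the identification of vertical linear sections with $\Hom(E,C)$ by linearization in the $E$-fibre — is the fiddly part of both computations.
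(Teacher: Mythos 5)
The paper gives no proof to compare against --- Proposition \ref{prop:reprelations} is explicitly left as an exercise --- so your argument must stand on its own. Part (1) is complete and correct: pairing with $e \in \Gamma(E^*)$, using that $\rho_D$ intertwines $[\cdot,\cdot]_D$ with the commutator of vector fields on $E$, and splitting the two terms of $[\rho_D(\chi),\rho_D(\alpha)](e)$ according to whether the vector field acts on a pullback from $M$ or on a linear function reproduces exactly the defining formula for $\psi^s_\chi(\boundary\alpha)$.

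For part (2), the one thing to fix is the sentence asserting that the Jacobi identity ``closes the argument'': it does not, and your own final paragraph correctly explains why. Expanding $[[\phi,\chi]_{\hat{A}},\alpha]_D$ by Jacobi, using $\psi^c_{i(\eta)}=\eta\circ\boundary$ and part (1), yields only
\begin{equation*}
[\phi,\chi]_{\hat{A}} \circ \boundary \;=\; \bigl( \phi\,\psi^s_\chi - \psi^c_\chi\,\phi \bigr) \circ \boundary ,
\end{equation*}
i.e.\ the identity restricted to $\im\boundary$; the dual manoeuvre with the flat representation $\psi^s$ gives $\boundary\circ\delta=0$ for the difference $\delta$ of the two sides, and $\delta\circ\boundary=0=\boundary\circ\delta$ does not force $\delta=0$ when $\boundary$ is degenerate (for a type $0$ structure it says nothing at all). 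So the representation-theoretic route can only serve as a consistency check, and the argument that actually proves (2) is the one you relegate to an ``alternatively'': in the coordinates of Remark \ref{rmk:coords}, write $i(\phi)=\phi^m_a e^a C_m$ and expand $[i(\phi),\chi]_D$ by the Leibniz rule, using that $[A_i,C_m]_D$ encodes $\psi^c$, that $\rho_D(C_m)$ is the constant vertical field encoding $-\boundary$, and that $\rho_D(A_i)$ acts on the fibre coordinates $e^a$ by $\psi^{s^*}$; matching the resulting $\Hom(E,C)$-valued linear section against $\phi\,\psi^s_\chi-\psi^c_\chi\,\phi$ term by term does succeed (I have verified this), with the case $\chi\in\Hom(E,C)$ reducing to \eqref{eqn:rephomcore}--\eqref{eqn:rephomside} and \eqref{eqn:bracketphi}. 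Restructure so that the coordinate computation is the proof; as written, a reader who stops before your last paragraph would come away with an invalid argument.
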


The representations of $\hat{A}$ may be pulled back to obtain representations $\theta^c$ and $\theta^s$ of $\Hom(E,C)$ on $C$ and $E$. Explicitly, these represetnations are given by
\begin{align}
\label{eqn:rephomcore}\theta^c_\phi (\alpha) &= \phi \circ \boundary (\alpha),\\
\label{eqn:rephomside}\theta^s_\phi (\varepsilon) &= \boundary \circ \phi (\varepsilon),
\end{align}
for $\phi \in \Hom(E,C)$, $\alpha \in \Gamma(C)$, and $\varepsilon \in \Gamma(E)$.

We would like to be able to push the side and core representations of $\hat{A}$ forward to obtain representations of $A$; however, this is generally not possible since the induced representations of $\Hom(E,C)$ in (\ref{eqn:rephomcore})-(\ref{eqn:rephomside}) are nontrivial.  If $\boundary$ is of constant rank, the side and core $\hat{A}$-representations do induce $A$-representations $\nabla^K$ on the subbundle $K \defequal \ker \boundary$ and $\nabla^\nu$ on the quotient bundle $\nu \defequal \coker \boundary$.  These induced $A$-representations play an important role in the classification of regular $\VB$--algebroids in \S\ref{sec:classification}.

Even if $\boundary$ is not of constant rank, it is possible to noncanonically extend the (possibly singular) representations on $K$ and $\nu$ to $C$ and $E$, at the cost of introducing curvature.  We discuss this in the following section.

\subsection{Side and core \texorpdfstring{$A$}{A}-connections}\label{sec:sideandcore}

There does not exist in general a section of the short exact sequence (\ref{shortexact-biga}) in the category of Lie algebroids.  Nonetheless, sections in the category of vector bundles do exist; in \S\ref{sec:horizontal}, they were called \emph{horizontal lifts}.

Let us choose a horizontal lift $h: A \to \hat{A}$.  For $X \in \Gamma(A)$, we denote its image in $\Gamma(\hat{A})$ by $\hat{X} = h(X)$.  We may use $h$ to pull back the representations $\psi^c$ and $\psi^s$ to $A$-connections $\nabla^c$ and $\nabla^s$, respectively, so that, 
\begin{align}\label{eqn:scconn}
\nabla^c_X \defequal \psi^c_{\hat{X}}, && \nabla^s_X \defequal \psi^s_{\hat{X}}.
\end{align}

\begin{rmk}
  The connections $\nabla^s$ on $E$ and $\nabla^c$ on $C$ depend on the choice of horizontal lift.  However, they are extensions of the canonical flat connections $\nabla^K$ on the subbundle $K$ of $C$ and $\nabla^\nu$ on the quotient bundle $\nu$ of $E$, which were introduced above.
\end{rmk}

The induced side and core connections \eqref{eqn:scconn} will generally have nonzero curvature, resulting from the failure of $h$ to respect Lie brackets.  To this end, we define $\Omega \in \bigwedge^2 \Gamma(A^*) \otimes \Hom(E,C)$ as follows:
\begin{equation*}
\Omega_{X,Y} \defequal \hat{[X,Y]} - [\hat{X},\hat{Y}]
\end{equation*}
for $X,Y \in \Gamma(A)$.  We will later require the following identity:
\begin{lemma}\label{lemma:omegacycl}
For all $X,Y,Z \in \Gamma(A)$, 
\begin{equation*}
\Omega_{[X,Y],Z} + [\Omega_{X,Y},\hat{Z}] + \cyclic = 0.
\end{equation*}
\end{lemma}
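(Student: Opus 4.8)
The plan is to obtain this Bianchi-type identity as a purely formal consequence of comparing the Jacobi identities in $\hat{A}$ and in $A$ through the linear splitting $h$. The one relation I need is the rearrangement of the definition of $\Omega$, namely $[\hat{X},\hat{Y}]_{\hat{A}} = \hat{[X,Y]} - \Omega_{X,Y}$, together with the facts established earlier that $\pi\colon \hat{A}\to A$ is a Lie algebroid morphism and that $h$ is a $C^\infty(M)$-linear section of it. Here $\hat{[X,Y]}$ means $h([X,Y]_A)$, and the bracket appearing in the statement is the $\hat{A}$-bracket (which agrees with $[\cdot,\cdot]_D$ on these sections).

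First I would check that the identity is well-posed as a statement about sections of $\Hom(E,C)$. Both $\hat{[X,Y]}$ and $[\hat{X},\hat{Y}]_{\hat{A}}$ project under $\pi$ to $[X,Y]_A$ (the former since $h$ splits $\pi$, the latter since $\pi$ is a morphism), so their difference $\Omega_{X,Y}$ lies in $\ker\pi = \Hom(E,C)$; and since $\Hom(E,C)$ is the kernel of the algebroid morphism $\pi$, hence an ideal, each $[\Omega_{X,Y},\hat{Z}]_{\hat{A}}$ again lies in $\Gamma(\Hom(E,C))$. The heart of the argument is then to expand the double bracket $[[\hat{X},\hat{Y}]_{\hat{A}},\hat{Z}]_{\hat{A}}$ by applying the rearranged definition twice. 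Writing $[\hat{X},\hat{Y}]_{\hat{A}} = \hat{[X,Y]} - \Omega_{X,Y}$ and then applying the same relation to $[\hat{[X,Y]},\hat{Z}]_{\hat{A}}$, using that $\hat{[X,Y]} = h([X,Y]_A)$ is itself a lift, gives
\begin{equation*}
[[\hat{X},\hat{Y}]_{\hat{A}},\hat{Z}]_{\hat{A}} = \hat{[[X,Y],Z]} - \Omega_{[X,Y],Z} - [\Omega_{X,Y},\hat{Z}]_{\hat{A}}.
\end{equation*}

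Finally I would take the cyclic sum over $X,Y,Z$. The left-hand side vanishes by the Jacobi identity in $\hat{A}$. The first term on the right is $h$ applied to the cyclic sum $\sum_{\cyclic}[[X,Y]_A,Z]_A$, which vanishes by the Jacobi identity in $A$; since $h$ is linear, this contributes $h(0)=0$. What survives is exactly
\begin{equation*}
\Omega_{[X,Y],Z} + [\Omega_{X,Y},\hat{Z}] + \cyclic = 0,
\end{equation*}
as claimed. No curvature or connection computation is required. The only real care needed---the closest thing to an obstacle---is bookkeeping of the cyclic sums: one must verify that $\sum_{\cyclic}\hat{[[X,Y],Z]}$ is genuinely the image under $h$ of the $A$-Jacobiator, and keep the $-\Omega_{[X,Y],Z}$ and $-[\Omega_{X,Y},\hat{Z}]$ terms from being conflated, since both arise from the double bracket but play different roles in the final sum.
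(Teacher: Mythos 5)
Your proof is correct and is essentially the same argument as the paper's: both rearrange the definition $\Omega_{X,Y}=\hat{[X,Y]}-[\hat{X},\hat{Y}]$ to expand the double bracket as $\hat{[[X,Y],Z]} = \Omega_{[X,Y],Z} + [\Omega_{X,Y},\hat{Z}] + [[\hat{X},\hat{Y}],\hat{Z}]$ and then kill the unwanted terms with the Jacobi identities in $A$ and in $\hat{A}$. Your additional remarks on well-posedness (that $\Omega$ lands in $\ker\pi=\Hom(E,C)$ and that the bracket with $\hat{Z}$ stays there) are a harmless elaboration the paper leaves implicit.
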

\begin{proof}From the definition of $\Omega$, we have that
\begin{equation*}
\hat{[[X,Y],Z]} = \Omega_{[X,Y],Z} + [\Omega_{X,Y}, \hat{Z}] + [[\hat{X},\hat{Y}], \hat{Z}].
\end{equation*}
The result follows from the Jacobi identity.
\end{proof}

A direct computation reveals that the curvatures $F^c$ and $F^s$ of $\nabla^c$ and $\nabla^s$, respectively, satisfy the following equations for $X,Y \in \Gamma(A)$:
\begin{align}
\label{eqn:curvc}F^c_{X,Y} &= \theta^c_{\Omega_{X,Y}} = \Omega_{X,Y} \circ \boundary, \\
\label{eqn:curvs}F^s_{X,Y} &= \theta^s_{\Omega_{X,Y}} = \boundary \circ \Omega_{X,Y}.
\end{align}
Additionally, the following properties are immediate consequences of Proposition \ref{prop:reprelations}:
\begin{align}
\label{eqn:relconn}\boundary \circ \nabla^c_X &= \nabla^s_X \circ \boundary, \\
\label{eqn:relconnphi}\phi \circ \nabla^s_X - \nabla^c_X \circ \phi &= [\phi, \hat{X}],
\end{align}
for $X \in \Gamma(A)$ and $\phi \in \Hom(E,C)$.

\subsection{The \texorpdfstring{$A$}{A}-Superconnection}\label{sec:superconn}
So far, given a $\VB$--algebroid equipped with a horizontal lift, we have obtained the following data:
\begin{itemize}
  \item a bundle map $\boundary: C \to E$,
\item (in general, nonflat) $A$-connections $\nabla^c$ and $\nabla^s$ on $C$ and $E$, respectively, and
\item a $\Hom(E,C)$-valued $A$-$2$-form $\Omega$.
\end{itemize}
In this section, we will show that the above data may be combined to form a flat $A$-superconnection.  Let us first recall the definitions.

Let $A\to M$ be a Lie algebroid, let $\Omega(A)$ denote the algebra of Lie algebroid forms $\bigwedge \Gamma(A^*)$, and let $\mathcal{E}$ be a $\integers$-graded vector bundle over $M$.  The algebra $\Omega(A)$ and the space $\Gamma(\mathcal{E})$ are both naturally $\integers$-graded. We consider the space of $\mathcal{E}$-valued $A$-forms
 $ \Omega(A) \otimes_{C^\infty(M)} \Gamma(\mathcal{E})$
to be $\integers$-graded with respect to the total grading.

\begin{dfn}\label{dfn:asuper}
An \emph{$A$-superconnection} on $\mathcal{E}$ is an odd operator $\totaldiff$ on $\Omega(A) \otimes \Gamma(\mathcal{E})$ such that
\begin{equation}\label{eqn:superleibniz}
  \totaldiff(\omega s) = (d_A \omega)s + (-1)^p \omega \wedge \totaldiff(s)
\end{equation}
for all $\omega \in \Omega(A)$ and $s \in \Gamma(\mathcal{E})$, where $p$ is the degree of $\omega$.  We say that $\totaldiff$ is \emph{flat} if the \emph{curvature} $\totaldiff^2$ is zero.
\end{dfn}

\begin{rmk}
  When the graded bundle $\mathcal{E}$ is concentrated in degree $0$, Definition \ref{dfn:asuper} agrees with the notion of an $A$-connection in the sense of Fernandes \cite{fernandes}.  On the other hand, when $A = TM$, the above notion of an $A$-superconnection reduces to that of a superconnection in the sense of Quillen \cite{quillen:superconnections}.
\end{rmk}

\begin{rmk}
  The superconnections of primary interest in this paper are of degree $1$.  For this reason, in the remainder of this paper, by ``superconnection'' we will mean ``degree $1$ superconnection'' unless otherwise stated.  
\end{rmk}

Let us now return to the situation of a $\VB$--algebroid equipped with a horizontal lift $A \to \hat{A}$.  Let $D^c$ be the degree $1$ operator on $\Omega(A) \otimes \Gamma(C)$ associated to the core connection $\nabla^c$.  Similarly, let $D^s$ be the operator on $\Omega(A) \otimes \Gamma(E)$ associated to $\nabla^s$.  We may extend both $D^c$ and $D^s$ to $\Omega(A) \otimes \Gamma(C \oplus E)$ by setting $D^c (\omega \varepsilon) = D^s (\omega \alpha) = 0$ for all $\omega \in \Omega(A)$, $\varepsilon \in \Gamma(E)$, and $\alpha \in \Gamma(C)$.  

We may also view $\boundary$ and $\Omega$ as operators on $\Omega(A) \otimes \Gamma(C \oplus E)$, where for $\omega \in \Omega^p(A)$, $\alpha \in \Gamma(C)$, and $\varepsilon \in \Gamma(E)$,
\begin{align*}
\boundary(\omega \alpha) &= (-1)^p \omega \cdot \boundary(\alpha), && \boundary(\omega \varepsilon) = 0, \\
\Omega(\omega \alpha) &= 0, && \Omega(\omega \varepsilon) = (-1)^p \omega \wedge \Omega(\varepsilon).
\end{align*}

Although $\boundary$ and $\Omega$ are of degree $0$ and $2$, respectively, as operators on $\Omega(A) \otimes \Gamma(C \oplus E)$, they may both be viewed as degree $1$ operators on $\Omega(A) \otimes \Gamma(C[1] \oplus E)$, where the $[1]$ denotes that sections of $C$ are considered to be of degree $-1$.  Thus $\totaldiff \defequal \boundary + D^c + D^s + \Omega$ is a degree $1$ operator on $\Omega(A) \otimes \Gamma(C[1] \oplus E)$.  Clearly, $\totaldiff$ satisfies \eqref{eqn:superleibniz}, so $\totaldiff$ is a degree $1$ $A$-superconnection on $C[1] \oplus E$.

\begin{thm}The superconnection $\totaldiff$ is flat.\end{thm}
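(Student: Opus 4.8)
The plan is to verify $\totaldiff^2 = 0$ directly. First I would observe that, because $\totaldiff$ obeys the Leibniz rule \eqref{eqn:superleibniz}, a one-line computation using $d_A^2 = 0$ gives $\totaldiff^2(\omega s) = \omega \wedge \totaldiff^2(s)$ for all $\omega \in \Omega(A)$ and $s \in \Gamma(C \oplus E)$; thus $\totaldiff^2$ is $\Omega(A)$-linear, and it is enough to check that it annihilates $\alpha \in \Gamma(C)$ and $\varepsilon \in \Gamma(E)$ regarded as $0$-forms.

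The key structural observation is that each of the four summands of $\totaldiff$ either preserves or swaps the two summands of $C[1]\oplus E$: $D^c$ acts within the $C$-component and kills the $E$-component, $D^s$ does the reverse, $\boundary$ sends $C$ to $E$, and $\Omega$ sends $E$ to $C$. Hence of the sixteen terms in the expansion of $\totaldiff^2$ only eight survive, and they organize into four components indexed by the source and target summand:
\begin{align*}
\text{(C,C):}&\quad (D^c)^2+\Omega\circ\boundary, &\text{(C,E):}&\quad \boundary\circ D^c+D^s\circ\boundary,\\
\text{(E,E):}&\quad (D^s)^2+\boundary\circ\Omega, &\text{(E,C):}&\quad D^c\circ\Omega+\Omega\circ D^s.
\end{align*}
I would then match each of these with an identity already in hand. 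The two ``diagonal'' components vanish by the curvature formulas \eqref{eqn:curvc} and \eqref{eqn:curvs}, which identify $F^c$ and $F^s$ with $\Omega\circ\boundary$ and $\boundary\circ\Omega$ respectively, once the relative sign between $(D^c)^2$ and $\Omega\circ\boundary$ (and likewise on the $E$ side) is pinned down by the curvature convention; the component $\boundary\circ D^c+D^s\circ\boundary$ vanishes by the intertwining relation \eqref{eqn:relconn}.

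The one component that requires genuine work is the remaining off-diagonal term $D^c\circ\Omega + \Omega\circ D^s$, which is precisely the covariant exterior derivative of $\Omega$ with respect to the connection on $\Hom(E,C)$ induced by $\nabla^c$ and $\nabla^s$. Evaluating on $X,Y,Z\in\Gamma(A)$ and $\varepsilon\in\Gamma(E)$ produces the cyclic sum of $\Omega_{[X,Y],Z}$ together with terms $\nabla^c_Z\circ\Omega_{X,Y} - \Omega_{X,Y}\circ\nabla^s_Z$; using \eqref{eqn:relconnphi} to rewrite the latter as $-[\Omega_{X,Y},\hat{Z}]$, this is exactly the expression shown to vanish in Lemma \ref{lemma:omegacycl}, so the term is zero. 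This is the component where the Jacobi identity of $A$ --- which powers Lemma \ref{lemma:omegacycl} --- actually enters the argument.

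I expect the main obstacle to be one of sign bookkeeping rather than of ideas: the Leibniz signs carried by $D^c$ and $D^s$, the $(-1)^p$ twists built into the definitions of $\boundary$ and $\Omega$ on $\Omega(A)\otimes\Gamma(C[1]\oplus E)$, and the curvature sign convention must all be reconciled so that each of the four components is genuinely $0$ and not a nonzero multiple of the corresponding identity. Once the conventions are fixed consistently, the proof reduces to the four-way matching above.
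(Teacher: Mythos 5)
Your proposal is correct and follows essentially the same route as the paper: the paper decomposes the curvature $F=\totaldiff^2$ by its degree in $\End(C[1]\oplus E)$ into $F_{-1}+F_0+F_1$ (your four source/target blocks, with the two diagonal blocks merged into $F_0$), kills $F_0$ via \eqref{eqn:curvc}--\eqref{eqn:curvs}, kills $F_1$ via \eqref{eqn:relconn}, and handles the genuinely nontrivial piece $F_{-1}=D^c\circ\Omega+\Omega\circ D^s$ exactly as you do, by combining Lemma \ref{lemma:omegacycl} with \eqref{eqn:relconnphi}. The only cosmetic differences are your explicit preliminary remark that $\totaldiff^2$ is $\Omega(A)$-linear and your splitting of $F_0$ into its $C$- and $E$-components.
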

\begin{proof}
Let $F \defequal \totaldiff^2$ be the curvature of $\totaldiff$.  Since $\End(C[1] \oplus E)$ is concentrated in degrees $-1$, $0$, and $1$, we may decompose $F$, which is an $\End(C[1] \oplus E)$-valued $A$-form of total degree $2$, as $F = F_{-1} + F_0 + F_1$, where $F_i \in \Omega^{2-i}(A) \otimes \End_i(C[1] \oplus E)$.  Specifically, we have
\begin{align*}
F_{-1} &= D^c \circ \Omega + \Omega \circ D^s, \\
F_0 &= F^c + F^s + \boundary \circ \Omega + \Omega \circ \boundary, \\
F_1 &= \boundary \circ D^c + D^s \circ \boundary.
\end{align*}
It is immediate from (\ref{eqn:curvc}) and (\ref{eqn:curvs}) that $F_0 = 0$.  Similarly, $F_1 = 0$ by (\ref{eqn:relconn}).

To see that $F_{-1} = 0$, we compute the following for $X,Y,Z \in \Gamma(A)$:
\begin{equation*}
\begin{split}
\iota_Z \iota_Y \iota_X D^c \circ \Omega &= \nabla^c_X \Omega_{Y,Z} - \Omega_{[X,Y],Z} + \cyclic\\
&= \nabla^c_X \Omega_{Y,Z} + [\Omega_{X,Y}, \hat{Z}] + \cyclic\\
&= \Omega_{X,Y} \nabla^s_Z + \cyclic\\
&= -\iota_Z \iota_Y \iota_X \Omega \circ D^s.
\end{split}
\end{equation*}
Lemma \ref{lemma:omegacycl} was used in the second line, and (\ref{eqn:relconnphi}) was used in the third line.  We conclude that $\totaldiff^2 =0$, so $\totaldiff$ is flat.
\end{proof}

\subsection{The superconnection in the differential viewpoint} \label{sec:decompdiff}

As we saw in \S\ref{sec:horizontal}, a choice of a horizontal lift $A \to \hat{A}$ is equivalent to a choice of a decomposition $D \iso A \oplus E \oplus C$.  Given such a choice, the space of algebroid cochains may be decomposed as
\begin{equation}\label{eqn:phi}
\bigwedge \Gamma(D\starover{E}, E) \iso \bigwedge \Gamma(A^*) \otimes C^\infty(E) \otimes \bigwedge \Gamma(C^*).
\end{equation}

We restrict our attention to the elements of $\bigwedge \Gamma(D\starover{E}, E)$ that are of degree $1$ with respect to the ``over $A$'' grading of \S\ref{sec:diff}.  Using the decomposition (\ref{eqn:phi}), we may describe the subspace of such elements as
\begin{equation*}
\bigwedge \Gamma(A^*) \otimes \left(C^\infty_\ell(E) \oplus \bigwedge^1 \Gamma(C^*)\right) = 
\Omega(A) \otimes \left(\Gamma(E^*) \oplus \Gamma(C^*[-1])\right).
\end{equation*}
This subspace is invariant under the differential $d_D$, and it is immediate that the restriction of $d_D$ to this subspace is a flat $A$-superconnection on $E^* \oplus C^*[-1]$.

It may be seen that $d_D$ is dual to the superconnection $\totaldiff$ of \S\ref{sec:superconn}, in the sense that, for all $\omega \in \Omega(A) \otimes \left(\Gamma(C[1]) \oplus \Gamma(E)\right)$ and $\eta \in \Omega(A) \otimes \left(\Gamma(E^*) \oplus \Gamma(C^*[-1])\right)$,
\begin{equation*}
\langle \totaldiff\omega, \eta \rangle = d_A \langle \omega,\eta \rangle - (-1)^{|\omega|} \langle \omega, d_D \eta \rangle.
\end{equation*}
From this perspective, we see that the flatness of $\totaldiff$ is equivalent to the fact that $d_D^2 = 0$.

The following theorem, which ties together the main results of \S\ref{sec:genconns}, is an immediate consequence of the above discussion.

\begin{thm}  \label{thm:summary} {\ }
\begin{enumerate}
\item  There is a one-to-one correspondence between $\VB$--algebroid structures on the decomposed DVB $A \oplus E\oplus C$ and flat $A$-superconnections on $C[1] \oplus E$.
\item  Let $D$ be a DVB such as \eqref{lavb}, with side bundles $A$ and $E$, and with core bundle $C$, where $A$ is a Lie algebroid.  After choosing a horizontal lift $\Gamma(A) \to \Gamma(\hat{A})= \Gamma_{\ell}(D,E)$  (or, equivalently, a decomposition $D \iso A \oplus E \oplus C$), there is a one-to-one correspondence between  $\VB$--algebroid structures on  $D$ and flat $A$-superconnections on $C[1] \oplus E$.
\item  A flat $A$-superconnection on $C[1] \oplus E$ is equivalent to
   an $A$--connection $\nabla^c$ on $C$, an $A$--connection $\nabla^s$ on $E$, an operator $\boundary: C \to E$, and an operator $\Omega \in \bigwedge^2\Gamma(A^*) \otimes \Hom(E,C)$, satisfying 
\begin{equation} \label{eqn:conditions}
\begin{split}
\boundary \circ \nabla^c_X = \nabla^s_X \circ \boundary \\
F^c_{X,Y} = \Omega_{X,Y} \circ \boundary \\
F^s_{X,Y} = \boundary \circ \Omega_{X,Y} \\
D^c \Omega + \Omega D^s = 0
\end{split}
\end{equation}
for all $X,Y \in \Gamma(A)$.  Here,  $F^c$ and $F^s$ are the curvatures of $\nabla^c$ and $\nabla^s$; whereas $D^c$ and $D^s$ are the operators on $ \Omega(A) \otimes \Gamma \left( C[1] \oplus E \right )$ associated to $\nabla^c$ and $\nabla^s$.
\end{enumerate}
\end{thm}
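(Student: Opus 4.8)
The plan is to treat the theorem as the assembly of the constructions in \S\ref{sec:superconn} and \S\ref{sec:decompdiff}, establishing the three parts in the order (3), (1), (2). Part (3) is a statement purely about superconnections on the two-term complex, parts (1)--(2) then identify these with $\VB$--algebroid structures through the differential/supergeometric viewpoint.

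For part (3), I would work entirely with the graded bundle $C[1]\oplus E$, which is concentrated in degrees $-1$ and $0$, so that $\End(C[1]\oplus E)$ lives in degrees $-1,0,1$. A degree $1$ $A$-superconnection $\totaldiff$ then decomposes uniquely according to the form degree it raises: the component raising form degree by $0$ is a bundle map of endomorphism-degree $+1$, hence an operator $\boundary\colon C\to E$; the component raising form degree by $1$ is endomorphism-degree $0$, hence a pair of $A$-connections $\nabla^c$ on $C$ and $\nabla^s$ on $E$ (together $D^c+D^s$); and the component raising form degree by $2$ is endomorphism-degree $-1$, hence an element $\Omega\in\bigwedge^2\Gamma(A^*)\otimes\Hom(E,C)$. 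Conversely any such quadruple reassembles to a degree $1$ superconnection. The curvature $\totaldiff^2$ has total degree $2$ and splits by endomorphism-degree as $F_{-1}+F_0+F_1$; reading off each piece exactly as in the flatness computation of \S\ref{sec:superconn} identifies $F_1=0$ with the first equation of \eqref{eqn:conditions}, $F_0=0$ with the middle two, and $F_{-1}=0$ with the last. One must keep track of the Koszul signs built into viewing $\boundary$ and $\Omega$ as operators on $\Omega(A)\otimes\Gamma(C[1]\oplus E)$, but these are fixed by the conventions already set in \S\ref{sec:superconn}, so flat superconnections correspond bijectively to quadruples satisfying \eqref{eqn:conditions}.

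For part (1), I would use the final theorem of \S\ref{sec:supermfld}: a $\VB$--algebroid structure on the decomposed DVB is the same as a bidegree $(1,0)$ vector field $d_D$ on $D[1]_E$ with $d_D^2=0$. The decomposed DVB carries a canonical horizontal lift, so the construction of \S\ref{sec:superconn} is choice-free and produces a superconnection $\totaldiff$ from $d_D$; by \S\ref{sec:decompdiff} this $\totaldiff$ is dual to the restriction of $d_D$ to the degree-one-over-$A$ cochains $\Omega(A)\otimes(\Gamma(E^*)\oplus\Gamma(C^*[-1]))$, and flatness of $\totaldiff$ is equivalent to $d_D^2=0$. The point requiring verification is that this assignment is a genuine bijection. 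For this I would observe that $d_D$ is a derivation, hence determined by its values on the generators $\Gamma(A^*)$, $\Gamma(E^*)$, and $\Gamma(C^*)$ of $\Omega^{\bullet,\bullet}(D)$ over $C^\infty(M)$. On the degree-zero-over-$A$ subalgebra $\Omega^{\bullet,0}(D)=\Omega(A)$ the operator $d_D$ must coincide with the Lie algebroid differential $d_A$ induced by the $\VB$--algebroid structure on the side $A$, so $d_D$ is pinned down on $\Gamma(A^*)$; its values on $\Gamma(E^*)$ and $\Gamma(C^*)$ are exactly the data recorded by $\totaldiff$. Thus $\totaldiff$ together with $d_A$ determines $d_D$ on all generators, and conversely any $\totaldiff$ extends to a well-defined derivation $d_D$.

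It then remains to check that $d_D$ so defined is homological precisely when $\totaldiff$ is flat. Since $d_D^2$ is again a derivation (of bidegree $(2,0)$), it vanishes iff it vanishes on generators; on $\Gamma(A^*)$ this is automatic from $d_A^2=0$, while on $\Gamma(E^*)$ and $\Gamma(C^*)$ it is exactly the flatness $\totaldiff^2=0$ established in part (3). This gives the bijection of part (1). Part (2) is then immediate: a horizontal lift is the same as a decomposition $D\iso A\oplus E\oplus C$ by the proposition of \S\ref{sec:horizontal}, and such a decomposition is a statomorphism transporting any $\VB$--algebroid structure on $D$ to one on the decomposed DVB, so part (1) applies verbatim. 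I expect the only genuine obstacle to be the bijectivity in part (1)---specifically the bookkeeping showing that $\totaldiff$ captures $d_D$ on all generators (using $d_D|_{\Omega(A)}=d_A$) and that $\totaldiff^2=0$ is equivalent to $d_D^2=0$ generator-by-generator; everything else is degree counting and the sign conventions already in place.
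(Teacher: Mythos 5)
Your proposal is correct and follows essentially the same route as the paper, which presents Theorem \ref{thm:summary} as an immediate consequence of the constructions already carried out in \S\ref{sec:superconn} (building $\totaldiff = \boundary + D^c + D^s + \Omega$ and checking flatness componentwise) and \S\ref{sec:decompdiff} (the duality between $\totaldiff$ and the restriction of $d_D$ to the degree-one-over-$A$ cochains). The only difference is that you spell out the bijectivity bookkeeping --- the degree decomposition of a flat superconnection in part (3) and the determination of $d_D$ by its values on generators in part (1) --- which the paper leaves implicit.
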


In \S\ref{sec:doDiad} we explain how the flat $A$--superconnection depends on the choice of horizontal lift.

\begin{ex}
  A $\VB$--algebroid is said to be \emph{vacant} if the core is trivial.  In this case, there is a unique decomposition $D = A\oplus E$, so by Theorem \ref{thm:summary} there is a one-to-one correspondence between vacant $\VB$--algebroids and Lie algebroid representations.
\end{ex}

\begin{ex}
  In the case where $M$ is a point, so that $A$ is a Lie algebra, it is perhaps surprising that the situation does not simplify much; after choosing a decomposition, we still obtain a flat $A$-superconnection on $C[1] \oplus E$, where $C$ and $E$ are now vector spaces.  In particular, there exist examples that do not correspond to ordinary Lie algebra representations.  This situation is in contrast to that of representations up to homotopy (in the sense of Evens, Lu, Weinstein \cite{elw}), which reduce to ordinary representations when $A$ is a Lie algebra.
\end{ex}

\subsection{Dependence of \texorpdfstring{$\totaldiff$}{D} on the horizontal lift}  \label{sec:doDiad}

As we saw in Theorem \ref{thm:summary}, a $\VB$--algebroid structure on a DVB \eqref{lavb} is, after choosing a horizontal lift, equivalent to a flat $A$-superconnection on $C[1] \oplus E$.  It is then reasonable to wonder how flat $A$-superconnections behave under a change of horizontal lift.  In this section, we will obtain a simple description that may be interpreted as a natural notion of equivalence between two flat $A$-superconnections.  

 The set of horizontal lifts is an affine space modelled on the vector space $\Wii$.  More specifically, consider two horizontal lifts $h, \nue{h} : \Gamma(A) \to \Gamma(\hat{A})$.  For $X \in \Gamma(A)$ we denote $\hat{X} \defequal h(X)$ and $\nue{\hat{X}} \defequal \nue{h}(X)$.  Let $\sigma_X \in \Hom(E,C)$ be defined as
\begin{equation} \label{eq:2hats}
\sigma_X \defequal \nue{\hat{X}} - \hat{X}.
\end{equation}
Equation \eqref{eq:2hats} defines a unique $\sigma \in \Wii = \Omega^1(A) \otimes \Hom(E,C)$.

We may extend $\sigma$ to an operator of total degree $0$ on $\Omega(A) \otimes \Gamma(E \oplus C[1])$, where for $\omega \in \Omega^p(A)$, $\alpha \in \Gamma(C)$, and $\varepsilon \in \Gamma(E)$,
\begin{align*}
\sigma(\omega \alpha) &= 0, && \sigma(\omega \varepsilon) = (-1)^p \omega \wedge \sigma(\varepsilon).
\end{align*}

\begin{thm}\label{thm:vbtoflat}
Let $X \to \hat{X}$ and $X \to \nue{\hat{X}}$ be two horizontal lifts related by $\sigma \in \Wii$ via \eqref{eq:2hats}.  Let $\totaldiff$ and $\nue{\totaldiff}$ be the corresponding superconnections.  
Then
\begin{equation} \label{eq:transftotaldiff}
\nue{\totaldiff} = \totaldiff + [\sigma, \totaldiff] + \frac{1}{2} \left[ \sigma, \left[  \sigma, 
\totaldiff \right]  \right].
\end{equation}
In addition,
\begin{equation*}
\left[ \sigma, \left[ \sigma, \left[ \sigma, \totaldiff \right] \right] \right] = 0.
\end{equation*}
If we denote 
\begin{equation*}
\ad(P_1)P_2 := [P_1,P_2]
\end{equation*}
 for  operators $P_1$ and $P_2$
 on $\Omega(A) \otimes \Gamma(E \oplus C[1])$, then
\eqref{eq:transftotaldiff} can be rewritten as 
\begin{equation*}
\nue{\totaldiff} = \sum_{n=0}^{\infty} \frac{1}{n!} (\ad(\sigma))^n \; \totaldiff  \; = \; \exp(\ad(\sigma)) \; \totaldiff \; = \; u \circ \totaldiff \circ u^{-1}. 
\end{equation*}
In the last equation, $u$ is the automorphism in $\Omega(A) \otimes \Gamma(E \oplus C[1])$ defined by $u = 1 + \sigma$.
\end{thm}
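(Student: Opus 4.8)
The theorem asserts that changing the horizontal lift by $\sigma \in \Wii = \Omega^1(A) \otimes \Hom(E,C)$ transforms the flat superconnection $\totaldiff$ by conjugation with $u = 1 + \sigma$. Since $\sigma$ has total degree $0$ and raises the "$\Hom(E,C)$-grading" by one step (sending $E$ to $C$), it is nilpotent of order $2$ on the two-term complex $E \oplus C[1]$; this is the structural fact that makes $u = 1+\sigma$ invertible with $u^{-1} = 1 - \sigma$ and that truncates all the infinite series.

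The plan is to separate the statement into a purely formal part, which follows from the nilpotency of $\sigma$, and a single substantive identity. First observe that, as an operator on $\Omega(A) \otimes \Gamma(E \oplus C[1])$, $\sigma$ maps the $E$-summand into the $C$-summand and annihilates the $C$-summand; hence $\sigma^2 = 0$. Consequently $u = 1 + \sigma$ is invertible with $u^{-1} = 1 - \sigma$, and $\exp(\sigma) = 1 + \sigma = u$. This already reduces the three displayed expressions for $\nue{\totaldiff}$ to one another: the standard identity $\exp(\ad(\sigma)) = \Ad(\exp(\sigma))$ gives $\exp(\ad(\sigma))\totaldiff = u \circ \totaldiff \circ u^{-1}$, and the truncation of the exponential series is immediate once we know $(\ad(\sigma))^3 \totaldiff = 0$.

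Next I would dispatch the nilpotency $(\ad(\sigma))^3\totaldiff = 0$. Expanding the iterated commutator binomially, every term of $(\ad(\sigma))^3 \totaldiff$ has the form $\pm \sigma^a \totaldiff \sigma^b$ with $a + b = 3$, so at least one of $a,b$ is $\geq 2$; since $\sigma^2 = 0$, each such term vanishes. (In fact $(\ad(\sigma))^3 = 0$ as an operator, for the same reason.) Thus the only thing left to prove is the conjugation identity $\nue{\totaldiff} = u \circ \totaldiff \circ u^{-1} = (1+\sigma)\,\totaldiff\,(1-\sigma)$.

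For the conjugation identity itself, I would first record how the constituents of $\totaldiff = \boundary + D^c + D^s + \Omega$ transform under the change of lift $\nue{\hat{X}} = \hat{X} + \sigma_X$, and then match them against the internal-degree components of $(1+\sigma)\totaldiff(1-\sigma)$. The core-anchor $\boundary$ is intrinsic, so $\nue{\boundary} = \boundary$. Since $\psi^c$ and $\psi^s$ restrict on $\Hom(E,C)$ to the representations $\theta^c,\theta^s$ of \eqref{eqn:rephomcore}--\eqref{eqn:rephomside}, linearity of the lift-pullback gives $\nue{\nabla}^c_X = \nabla^c_X + \theta^c_{\sigma_X} = \nabla^c_X + \sigma_X\boundary$ and $\nue{\nabla}^s_X = \nabla^s_X + \theta^s_{\sigma_X} = \nabla^s_X + \boundary\sigma_X$. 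Finally, from $\nue{\Omega}_{X,Y} = \nue{\hat{[X,Y]}} - [\nue{\hat{X}},\nue{\hat{Y}}]$ one expands using $\nue{\hat{Z}} = \hat{Z} + \sigma_Z$, the bracket formula \eqref{eqn:bracketphi} for $[\sigma_X,\sigma_Y]$, and Proposition \ref{prop:reprelations}(2) for the mixed brackets $[\hat{X},\sigma_Y]$, obtaining the transformation of $\Omega$ in terms of the covariant differential of $\sigma$ and the quadratic term $\sigma\boundary\sigma$. On the other side, expanding $(1+\sigma)\totaldiff(1-\sigma) = \totaldiff + [\sigma,\totaldiff] - \sigma\totaldiff\sigma$ and decomposing by the $E/C[1]$ (internal) grading isolates three pieces, of internal degree $+1$, $0$, and $-1$, which one checks coincide respectively with $\nue{\boundary}$, $\nue{\nabla}^c + \nue{\nabla}^s$, and $\nue{\Omega}$.

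The main obstacle is the sign bookkeeping in this last matching. The operators $\boundary$, $\Omega$, and $\sigma$ are defined on $\Omega(A)\otimes\Gamma(E\oplus C[1])$ with form-degree signs $(-1)^p$, and these interact with the graded commutators; for instance the internal-degree-$0$ piece of the conjugate comes out as $\sigma\boundary$ on $C$ and $\boundary\sigma$ on $E$ only after the $(-1)^p$ factors are tracked correctly (a naive computation produces a spurious sign on the $E$-part). The heaviest computation is the internal-degree-$(-1)$ piece, where $-D^c\sigma + \sigma D^s$ must be recognized as the covariant differential of $\sigma$ and combined with $-\sigma\boundary\sigma$ to reproduce $\nue{\Omega}$; this is precisely the point at which the Jacobi-type relations behind \eqref{eqn:bracketphi} and Proposition \ref{prop:reprelations} are used. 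A more conceptual route avoids some of this: since $d_D$ is intrinsic (\S\ref{sec:supermfld}) and a change of horizontal lift is a change of decomposition, the operators $d_D$ and $\nue{d_D}$ are conjugate by the change-of-decomposition automorphism, which dualizes to $u$; however, identifying that automorphism with $1+\sigma$ on the two-term complex still requires the affine identification of \S\ref{sec:horizontal}, so in practice the direct computation is the cleanest to write down.
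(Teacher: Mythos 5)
Your proposal is correct and follows essentially the same route as the paper: the substantive content in both is the componentwise computation of $\nue{\boundary}$, $\nue{D^c}$, $\nue{D^s}$, and $\nue{\Omega}$ using \eqref{eqn:rephomcore}--\eqref{eqn:rephomside}, \eqref{eqn:bracketphi}, and Proposition \ref{prop:reprelations}, matched against the expansion of the iterated commutators, with the remaining formal statements reduced to the nilpotency of $\sigma$. The only cosmetic difference is that you organize the comparison around $u \circ \totaldiff \circ u^{-1} = \totaldiff + [\sigma,\totaldiff] - \sigma\totaldiff\sigma$ whereas the paper compares directly with \eqref{eq:transftotaldiff}; these coincide because $\tfrac{1}{2}\left[\sigma,\left[\sigma,\totaldiff\right]\right] = -\sigma\totaldiff\sigma$ once $\sigma^2 = 0$ is noted.
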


\begin{proof}
Let us write each superconnection as sum of connections and operators, as in \S \ref{sec:superconn}:
\begin{equation*}
\begin{split}
\totaldiff & = D^c + D^s + \boundary + \Omega, \\
\nue{\totaldiff} & = \nue{D^c} + \nue{D^s} + \nue{\boundary} + \nue{\Omega}.
\end{split}
\end{equation*}

A direct calculation from \eqref{eq:2hats} gives us
\begin{align}
\label{eqn:changeDc}
\iota_X \nue{D^c} = & \; \nabla^c_{\nue{\hat{X}}} = \nabla^c_{\hat{X}} + \nabla^c_{\sigma_X} = \iota_X D^c + \sigma_X \circ \boundary,
\\
\label{eqn:changeDs}
\iota_X \nue{D^s} = & \; \nabla^s_{\nue{\hat{X}}} = \nabla^s_{\hat{X}} + \nabla^s_{\sigma_X} = \iota_X D^s + \boundary \circ \sigma_X, 
\\
\label{eqn:changeboundary}
\nue{\boundary} = & \; \boundary,
\\
\nue{\Omega}_{X,Y} = & \; \nue{\hat{[X,Y]}} - [\nue{\hat{X}}, \nue{\hat{Y}}] = 
\hat{[X,Y]} + \sigma_{[X,Y]} - [\hat{X} + \sigma_X, \hat{Y} + \sigma_Y].
\end{align}

According to \eqref{eqn:relconnphi}, we have
\begin{equation*}
\begin{split}
[\sigma_X, \hat{Y}] = \sigma_X \circ \nabla^s_{Y} - \nabla^c_{Y} \circ \sigma_X, \\
[\hat{X}, \sigma_Y] = \nabla^c_{X} \circ \sigma_Y - \sigma_Y \circ \nabla^s_{X},
\end{split}
\end{equation*}
and according to \eqref{eqn:bracketphi}, we have
\begin{equation*}
[\sigma_X, \sigma_Y] = \sigma_X \circ \boundary \circ \sigma_Y - \sigma_Y \circ \boundary \circ \sigma_X,
\end{equation*}
so that
\begin{equation}
\label{eqn:changeOmega}
\begin{split}
\nue{\Omega}_{X,Y} & = 
\Omega_{X,Y} + \sigma_{[X,Y]} - \sigma_X \nabla^s_Y + \sigma_Y \nabla^s_X - \nabla^c_X \sigma_Y + \nabla^c_Y \sigma_X  \\
& \; \; - \sigma_X \boundary \sigma_Y - \sigma_Y \boundary \sigma_X.
\end{split}
\end{equation}

Then we can rewrite \eqref{eqn:changeDc}, \eqref{eqn:changeDs}, \eqref{eqn:changeboundary}, and \eqref{eqn:changeOmega} as
\begin{equation} \label{eqn:changeoper}
\begin{split}
\nue{D^c}  = D^c + \sigma \boundary,  & \quad \quad
\nue{D^s}  = D^s - \boundary \sigma, \\
\nue{\boundary}   = \boundary, & \quad \quad
\nue{\Omega}  = \Omega - D^c \sigma + \sigma D^s - \sigma \boundary \sigma.
\end{split}
\end{equation}

On the other hand, we can write the left-hand side of \eqref{eq:transftotaldiff} in terms of $D^c$, $D^s$, $\boundary$ and $\Omega$ as follows:
\begin{equation} \label{eqn:commDsigma}
\begin{split}
\totaldiff & = D^s + D^c + \boundary + \Omega, \\
[\sigma, \totaldiff] & = \sigma D^s + \sigma \boundary - D^c \sigma - \boundary \sigma, \\
[\sigma, [\sigma, \totaldiff]] & = - 2 \sigma \boundary \sigma, \\
[\sigma, [\sigma, [\sigma, \totaldiff]]] & = 0.
\end{split}
\end{equation}

Finally, comparing \eqref{eqn:changeoper} and \eqref{eqn:commDsigma} completes the proof.
\end{proof}

\section{Characteristic classes}\label{sec:cc}
\label{sec:ccflat}
Given a Lie algebroid $A \to M$ equipped with a representation (i.e.\ a flat $A$-connection) on a vector bundle $E \to M$, Crainic \cite{crainic:vanest} has constructed Chern-Simons-type secondary characteristic classes in $H^{2k-1}(A)$.  In this section we extend his construction to flat $A$-superconnections on graded vector bundles.  In the case of flat $A$-superconnections arising from $\VB$--algebroids, we will see that the associated characteristic classes do not depend on the choice of horizontal lift; in other words, this construction gives us $\VB$--algebroid invariants.

Let $A \to M$ be a Lie algebroid, and let $\Etot = \bigoplus E_i \to M$ be a $\integers$-graded vector bundle\footnote{We assume that the total rank of $\Etot$ is finite, so as to ensure that the supertrace in (\ref{eqn:chernsimons}) is well-defined.} equipped with a flat $A$-superconnection $\totaldiff$.  In other words, $\totaldiff$ is a degree $1$ operator on $\Omega(A) \otimes \Gamma(\Etot)$ satisfying (\ref{eqn:superleibniz}) and such that $\totaldiff^2 = 0$.  Before we can define characteristic classes associated to $\totaldiff$, we will require a few pieces of background.

First, there is a natural pairing
\begin{equation}\label{eqn:pairing}
\Omega(A) \otimes \Gamma(\Etot) \times \Omega(A) \otimes \Gamma(\Etot^*) \to \Omega(A)
\end{equation}
given by $\langle \omega a, \eta \varsigma \rangle = (-1)^{|a||\eta|}\omega \wedge \eta \langle a, \varsigma \rangle$ for all $\omega, \eta \in \Omega(A)$, $a \in \Gamma(\Etot)$, and $\varsigma \in \Gamma(\Etot^*)$.  The adjoint connection $\totaldiff^\dagger$ is an $A$-superconnection on $\Etot^*$ defined by the equation
\begin{equation}\label{eqn:adjoint}
d_A \langle a, \varsigma \rangle = \langle \totaldiff a, \varsigma \rangle + (-1)^{|a|} \langle a, \totaldiff^\dagger \varsigma \rangle.
\end{equation}
It is immediate from the definition that $\totaldiff^2 = 0$ implies that $(\totaldiff^\dagger)^2 = 0$.  

Second, a choice of metric on $E_i$ for all $i$ gives an isomorphism $g: \Etot \iso \Etot^*$, which preserves parity but fails to be degree-preserving; rather, it identifies the degree $i$ component of $\Etot$ with the degree $-i$ component of $\Etot^*$.  Nonetheless, such a choice allows us to transfer $\totaldiff^\dagger$ to a flat $A$-superconnection $\presup{g}\totaldiff$ on $\Etot$.  The superconnection $\presup{g}\totaldiff$ of course depends on $g$, and since $g$ is not degree-preserving, $\presup{g}\totaldiff$ is not homogeneous of degree $1$.  To emphasize this fact, we will refer to $\presup{g}\totaldiff$ as a ``nonhomogeneous superconnection''.

Third, let $I$ be the unit interval, and consider the product Lie algebroid $A \times TI \to M \times I$.  If the canonical coordinates on $T[1]I$ are $\{t,\dot{t}\}$, then any Lie algebroid $p$-form $\mathcal{B} \in \Omega^p(A \times TI)$ may be uniquely expressed as $B_p(t) + \dot{t} B_{p-1}(t)$, where $B_p$ and $B_{p-1}$ are $t$-dependent elements of $\Omega^p(A)$ and $\Omega^{p-1}(A)$, respectively.  Furthermore, in terms of the coordinates on $T[1]I$, the Lie algebroid differential is $d_{A \times TI} = d_A + \dot{t}\pdiff{}{t}$.

Together, $\totaldiff$ and $\presup{g}\totaldiff$ determine an $A \times TI$-(nonhomogeneous) superconnection $\mathcal{T}_{\totaldiff, \presup{g}\totaldiff}$ on $p^*\Etot$, where $p$ is the projection map from $M \times I$ to $M$, such that
\begin{equation}\mathcal{T}_{\totaldiff, \presup{g}\totaldiff}(a) = t\totaldiff(a) + (1-t)(\presup{g}\totaldiff(a)),\label{eqn:transg}
\end{equation}
where $a \in \Gamma(\Etot)$ is viewed as a $t$-independent element of $\Gamma(p^*\Etot)$.  Equation \eqref{eqn:transg}, together with the Leibniz rule \eqref{eqn:superleibniz}, completely determines $\mathcal{T}_{\totaldiff, \presup{g}\totaldiff}$ as an operator on $\Omega(A \times TI) \otimes \Gamma(\Etot)$.

For positive integers $k$, the $k$-th Chern-Simons forms are then
\begin{equation}\label{eqn:chernsimons}
\cs^g_k(\totaldiff) \defequal \int \dee{t} \dee{\dot{t}} \str \left((\mathcal{T}_{\totaldiff, \presup{g}\totaldiff})^{2k}\right).
\end{equation}
The integral in (\ref{eqn:chernsimons}) is a Berezin integral.

For the purpose of clarity, we will spell out what \eqref{eqn:chernsimons} means in more detail.  Since $(\mathcal{T}_{\totaldiff, \presup{g}\totaldiff})^{2k}$ is an even operator on $\Omega(A \times TI) \otimes \Gamma(\Etot)$, its supertrace is an even (in general nonhomogeneous) element of $\Omega(A \times TI)$.  If we express $\str\left((\mathcal{T}_{\totaldiff, \presup{g}\totaldiff})^{2k}\right)$ in the form $B_{\even}(t) + \dot{t}B_{\odd}(t)$, then $\cs^g_k(\totaldiff) = \int_0^1 B_{\odd}(t) dt$.  Therefore $\cs^g_k(\totaldiff) \in \Omega^{\odd}(A)$.

\begin{rmk}
The integral in \eqref{eqn:chernsimons} may be explicitly computed.  The result is that, up to a constant, $\cs^g_k(\totaldiff)$ is given by
\begin{equation*}
\str \left( \totaldiff (\presup{g}\totaldiff \totaldiff)^{k-1} - (\presup{g}\totaldiff \totaldiff)^{k-1} (\presup{g}\totaldiff) \right).
\end{equation*}
\end{rmk}

Since the proofs of the following statements are similar to those of Crainic and Fernandes \cite{cf}, we postpone them to Appendix \ref{appendix:charclasses}.

\begin{prop}\label{prop:csclosed}
For all $k$, $\cs^g_k(\totaldiff)$ is closed.
\end{prop}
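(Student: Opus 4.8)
The plan is to show that $\cs^g_k(\totaldiff)$ is a closed Lie algebroid form by exploiting the fact that the Chern--Simons form arises, via Berezin integration over the interval, from a \emph{closed} form on the larger algebroid $A \times TI$. Concretely, the key observation is that $\str\bigl((\mathcal{T}_{\totaldiff,\presup{g}\totaldiff})^{2k}\bigr)$ is $d_{A\times TI}$-closed; once this is established, closedness of $\cs^g_k(\totaldiff)$ follows by integrating over $t$ and comparing the $\dot t$-components.

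First I would establish the fundamental identity that the supertrace of a power of a flat superconnection's curvature behaves like a genuine closed form. Since $\mathcal{T}_{\totaldiff,\presup{g}\totaldiff}$ is an $A\times TI$-superconnection, its square $\mathcal{F} := (\mathcal{T}_{\totaldiff,\presup{g}\totaldiff})^2$ is an $\End(p^*\Etot)$-valued $A\times TI$-form that commutes with $\mathcal{T}$ in the graded sense (the Bianchi identity $[\mathcal{T},\mathcal{F}]=0$). The heart of the argument is the standard superconnection lemma
\begin{equation*}
d_{A\times TI}\,\str(\mathcal{F}^k) = \str\bigl([\mathcal{T},\mathcal{F}^k]\bigr) = 0,
\end{equation*}
where the first equality uses the Leibniz rule for $d_{A\times TI}$ together with the defining property \eqref{eqn:superleibniz} of the superconnection, and the second uses the vanishing of the supertrace on graded commutators. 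I would verify the first equality by writing $d_{A\times TI}\str(Q) = \str([\mathcal{T},Q])$ for any $\End$-valued form $Q$, which is precisely the compatibility between $d_{A\times TI}$ and the superconnection acting on endomorphisms; then apply this with $Q = \mathcal{F}^k = (\mathcal{T}^2)^k = \mathcal{T}^{2k}$.

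Next I would extract the closedness of $\cs^g_k$ from this. Writing $\str(\mathcal{T}^{2k}) = B_{\even}(t) + \dot t\, B_{\odd}(t)$ as in the excerpt, and using $d_{A\times TI} = d_A + \dot t\,\pdiff{}{t}$, the identity $d_{A\times TI}\str(\mathcal{T}^{2k}) = 0$ separates by $\dot t$-degree into two equations; the one linear in $\dot t$ reads
\begin{equation*}
d_A B_{\odd}(t) = \pdiff{B_{\even}}{t}.
\end{equation*}
Integrating over $t \in [0,1]$ and recalling $\cs^g_k(\totaldiff) = \int_0^1 B_{\odd}(t)\,dt$ gives
\begin{equation*}
d_A\,\cs^g_k(\totaldiff) = \int_0^1 d_A B_{\odd}(t)\,dt = B_{\even}(1) - B_{\even}(0).
\end{equation*}
It then remains to argue that the boundary term $B_{\even}(1) - B_{\even}(0)$ vanishes. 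At $t=1$ and $t=0$ the operator $\mathcal{T}$ restricts to the honest \emph{flat} superconnections $\totaldiff$ and $\presup{g}\totaldiff$ on $\Etot$, so $\mathcal{T}^{2k}$ restricts to $\totaldiff^{2k}=0$ and $(\presup{g}\totaldiff)^{2k}=0$ respectively (these are the curvatures, which vanish by flatness); hence $B_{\even}(0) = B_{\even}(1) = 0$ and $\cs^g_k(\totaldiff)$ is closed.

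The main obstacle I expect is bookkeeping the signs and gradings carefully enough to trust the commutator-supertrace identity in the $\integers$-graded (rather than merely $\integers_2$-graded) setting, compounded by the fact that $\presup{g}\totaldiff$ is a \emph{nonhomogeneous} superconnection because $g$ is not degree-preserving. The supertrace still vanishes on graded commutators since it depends only on the $\integers_2$-parity, but one must confirm that $\mathcal{T}_{\totaldiff,\presup{g}\totaldiff}$ genuinely satisfies the superconnection Leibniz rule over $A\times TI$ despite this inhomogeneity, and that the Berezin integral over $\dee{t}\dee{\dot t}$ correctly picks out the $B_{\odd}$ component. This is exactly the kind of verification that parallels Crainic--Fernandes \cite{cf}, which is why the authors defer the detailed computation to the appendix.
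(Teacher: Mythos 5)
Your proposal is correct and follows essentially the same route as the paper: the identity $d_{A\times TI}\str\bigl((\mathcal{T}_{\totaldiff,\presup{g}\totaldiff})^{2k}\bigr)=0$ via the supertrace-of-commutator argument is the paper's Lemma \ref{lemma:chern}(1) (resting on Lemma \ref{lemma:strbracket}, which is stated for nonhomogeneous superconnections, so the inhomogeneity of $\presup{g}\totaldiff$ is indeed harmless), and your separation by $\dot t$-degree followed by the Fundamental Theorem of Calculus and the vanishing of the boundary terms $\totaldiff^{2k}=(\presup{g}\totaldiff)^{2k}=0$ is exactly the content of Lemma \ref{lemma:ftc1} combined with flatness. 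You have simply inlined the two lemmas the paper cites.
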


\begin{lemma}\label{lemma:keven}
If $k$ is even, then $\cs^g_k(\totaldiff) = 0$.
\end{lemma}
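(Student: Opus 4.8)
The plan is to show that the Chern--Simons form $\cs^g_k(\totaldiff)$ vanishes for even $k$ by exploiting a symmetry of the supertrace under the metric-induced adjoint operation $g$. The key observation is that $\presup{g}\totaldiff$ is, by construction, the transport of the adjoint superconnection $\totaldiff^\dagger$ back to $\Etot$ via the isomorphism $g \colon \Etot \iso \Etot^*$. First I would set up the involution on $A \times TI$-superconnections that interchanges $\totaldiff$ and $\presup{g}\totaldiff$: the metric $g$ conjugates operators on $\Omega(A) \otimes \Gamma(\Etot)$ to operators on $\Omega(A) \otimes \Gamma(\Etot^*)$, and under the pairing \eqref{eqn:pairing} the adjoint of $\mathcal{T}_{\totaldiff, \presup{g}\totaldiff}$ is, up to the swap $t \leftrightarrow (1-t)$, the corresponding transported superconnection $\mathcal{T}_{\presup{g}\totaldiff, \totaldiff}$.

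The core of the argument is a sign computation. For any even operator $P$ on a $\integers$-graded bundle, the supertrace satisfies $\str(P) = \str(P^\dagger)$, where the adjoint is taken with respect to the pairing; and crucially, taking the adjoint of a product of $2k$ operators reverses their order and contributes a sign depending on the parities. I would compute $\str\bigl((\mathcal{T}_{\totaldiff, \presup{g}\totaldiff})^{2k}\bigr)$ and compare it to its image under the $g$-adjoint. Transporting through $g$ relates $\mathcal{T}_{\totaldiff, \presup{g}\totaldiff}$ to the superconnection built from the same two pieces but with the roles of $\totaldiff$ and $\presup{g}\totaldiff$ exchanged, which amounts to the substitution $t \mapsto 1-t$ in \eqref{eqn:transg}. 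The order-reversal of the $2k$ factors contributes a sign $(-1)^{2k(2k-1)/2} = (-1)^{k(2k-1)} = (-1)^k$ from the graded-commutativity of the forms, together with the cyclicity of the supertrace. Thus one obtains an identity of the schematic form
\begin{equation*}
\str\bigl((\mathcal{T}_{\totaldiff, \presup{g}\totaldiff})^{2k}\bigr)(t) = (-1)^k \, \str\bigl((\mathcal{T}_{\totaldiff, \presup{g}\totaldiff})^{2k}\bigr)(1-t),
\end{equation*}
where on the right the $t$-argument has been reflected by the exchange of endpoints.

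When $k$ is even, the sign $(-1)^k$ is $+1$, so the $\dot{t}$-component $B_{\odd}(t)$ of the supertrace is symmetric under $t \mapsto 1-t$ but picks up a sign from the orientation reversal of the $\dot{t}$ coordinate in the Berezin integral; the net effect is that $B_{\odd}(t)$ is forced to be \emph{odd} under the reflection $t \mapsto 1-t$, so its integral $\int_0^1 B_{\odd}(t)\, dt$ vanishes. I would make this precise by tracking how the substitution $t \mapsto 1-t$ acts on $\dot{t} = dt$ and hence on the decomposition $B_{\even}(t) + \dot{t} B_{\odd}(t)$. The main obstacle I anticipate is getting all the graded signs exactly right: the parity shifts coming from the $\Omega(A)$-degrees of the matrix entries of the various operators, the sign in the definition \eqref{eqn:adjoint} of the adjoint superconnection, and the sign produced when $g$ (which is parity-preserving but not degree-preserving) is commuted past the supertrace. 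These must conspire to give precisely $(-1)^k$, and verifying this cleanly — most likely by reducing to the generating identity $\str(P_1 \cdots P_n) = (-1)^{\binom{n}{2} + \sum_{i<j}|P_i||P_j|}\str(P_n^\dagger \cdots P_1^\dagger)$ applied to the $2k$ identical factors — is where the care is needed.
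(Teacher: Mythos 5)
Your proposal follows essentially the same route as the paper: you identify the two key facts --- that the metric-adjoint conjugation turns $\mathcal{T}_{\totaldiff,\presup{g}\totaldiff}$ into $\mathcal{T}_{\presup{g}\totaldiff,\totaldiff}$ with a supertrace sign of $(-1)^k$ (the paper's Lemma~\ref{lemma:chern}(2)), and that the substitution $t \mapsto 1-t$ in the Berezin integral contributes an extra $-1$ --- and combine them to get $\cs^g_k(\totaldiff) = (-1)^{k-1}\cs^g_k(\totaldiff)$, which forces vanishing for even $k$. The sign bookkeeping you flag as the delicate point is exactly what the paper packages into Lemma~\ref{lemma:chern}(2), proved by iterating the defining identity \eqref{eqn:adjoint}, so your plan is sound as stated.
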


\begin{prop}\label{prop:csodd}
The cohomology class of $\cs^g_k(\totaldiff)$ is an element of $H^{2k-1}(A)$.  In other words, the components of $[\cs^g_k(\totaldiff)]$ in all degrees other than $2k-1$ vanish.
\end{prop}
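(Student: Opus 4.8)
The plan is to establish the stated vanishing of $[\cs^g_k(\totaldiff)]$ in degrees other than $2k-1$ by exploiting the homogeneity properties of the Chern-Simons forms together with the closedness already obtained in Proposition \ref{prop:csclosed}. The central observation I would use is that $\cs^g_k(\totaldiff)$ is \emph{a priori} a nonhomogeneous odd form, and one must pin down which total degrees can actually appear before passing to cohomology. The fact that $\cs^g_k(\totaldiff)$ lands in $\Omega^{\odd}(A)$ is already recorded just before the statement; my first task is to bound this from below, showing that the lowest-degree possible component is $2k-1$, and to show that the portion of degree strictly greater than $2k-1$ is exact.

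First I would examine the internal grading. The superconnection $\totaldiff$ is homogeneous of degree $1$, so each application of $\totaldiff$ raises total degree by one; the adjoint-transferred operator $\presup{g}\totaldiff$, by contrast, is nonhomogeneous because the metric isomorphism $g$ identifies the degree $i$ component of $\Etot$ with the degree $-i$ component of $\Etot^*$. The key point is to track, in the expression
$\str\left( \totaldiff (\presup{g}\totaldiff \totaldiff)^{k-1} - (\presup{g}\totaldiff \totaldiff)^{k-1} (\presup{g}\totaldiff) \right)$
from the Remark, how the \emph{form degree} over $A$ is distributed across the summands. Writing $\presup{g}\totaldiff = \sum_j (\presup{g}\totaldiff)_{j}$ as a sum of homogeneous pieces of degree $j$, each monomial in the expansion is a product of $2k$ factors, each of which raises total degree by either $+1$ (from $\totaldiff$) or by some value $j$ (from a homogeneous piece of $\presup{g}\totaldiff$); the supertrace is nonzero only on the degree-$0$ (even) endomorphism part, which forces the accumulated internal degree shifts to cancel. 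I would argue that this cancellation constraint, combined with the $A$-form degree carried by each factor, forces the total $A$-form degree to be at least $2k-1$, so that no component of degree $< 2k-1$ occurs.

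Next I would handle the components of degree $> 2k-1$. Here I would use Proposition \ref{prop:csclosed}, which gives $d_A \cs^g_k(\totaldiff) = 0$. Since $\cs^g_k(\totaldiff)$ is closed and its lowest component sits in degree $2k-1$, I would argue degree-by-degree: the $d_A$-closedness relates consecutive homogeneous components, and I would show that the homogeneous component in degree $2k-1$ is itself closed while each higher component is $d_A$-exact, so that modulo coboundaries only the degree $2k-1$ piece survives. The cleanest way to organize this is probably to show directly that the higher-degree components arise as $d_A$ of explicitly constructed transgression forms, in parallel with the standard Chern-Simons/transgression argument for ordinary connections; this mirrors the Crainic-Fernandes approach \cite{cf} that the authors invoke for the neighboring statements.

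The main obstacle I anticipate is the degree bookkeeping for the nonhomogeneous superconnection $\presup{g}\totaldiff$: because $g$ shifts internal degree by $2i$ when identifying $E_i$ with $E_{-i}^*$, the pieces $(\presup{g}\totaldiff)_j$ can have a range of degrees, and verifying that the supertrace truly vanishes unless the total $A$-form degree is exactly $2k-1$ (after passing to cohomology) requires care in matching internal-degree shifts against $A$-form degrees across all $2^{2k}$ monomials. I expect the supertrace's behavior on graded endomorphisms—specifically that $\str$ annihilates supercommutators—to be the essential tool for collapsing the higher-degree contributions into exact terms, and getting that identity to interact correctly with the nonhomogeneity of $\presup{g}\totaldiff$ is where the real work lies.
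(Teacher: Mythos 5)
Your proposal has two genuine gaps, and the second is fatal. First, the claim that the components of $\cs^g_k(\totaldiff)$ in degrees below $2k-1$ vanish identically does not follow from the degree bookkeeping you describe. If a monomial contains $a$ factors drawn from $\totaldiff$ (each of total degree $+1$) and $b$ factors drawn from $\presup{g}\totaldiff$ (whose homogeneous piece of form degree $q$ has endomorphism degree $q-1$, hence total degree $2q-1$), then the vanishing of the accumulated endomorphism degree, which is what the supertrace enforces, pins the $A$-form degree to $a-b+2\sum q_j$; this is not bounded below by $2k-1$. For example, with $a=b=k$ and all form degrees equal to $0$ (the $\boundary$ and $\presup{g}\boundary$ pieces) one gets form degree $0$. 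So low-degree components can genuinely occur in the form itself; they only disappear in cohomology. Second, and more seriously, your argument for the components of degree greater than $2k-1$ does not work: since $d_A$ is homogeneous of degree $+1$, closedness of a nonhomogeneous form is equivalent to closedness of each homogeneous component separately; it does not ``relate consecutive components'' and yields no exactness of any of them. Your fallback --- to exhibit the higher components as $d_A$ of explicitly constructed transgression forms --- is a restatement of what must be proved, with no construction supplied.

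The idea you are missing is the paper's auxiliary superconnection. Choose a block-diagonal $A$-superconnection $\mathcal{O}$ built from self-adjoint $A$-connections on each $E_i$, so that $\presup{g}\mathcal{O}=\mathcal{O}$ and $\mathcal{O}$ is homogeneous of degree $1$. Then $\mathcal{T}_{\totaldiff,\mathcal{O}}$ is homogeneous of degree $1$, so $I := \int \dee{t}\dee{\dot{t}}\str\bigl((\mathcal{T}_{\totaldiff,\mathcal{O}})^{2k}\bigr)$ lies manifestly in $\Omega^{2k-1}(A)$. After reducing to odd $k$ via Lemma \ref{lemma:keven}, the self-adjointness of $\mathcal{O}$ together with part (2) of Lemma \ref{lemma:chern} gives $\int \dee{t}\dee{\dot{t}}\str\bigl((\mathcal{T}_{\totaldiff,\mathcal{O}})^{2k}\bigr)=-\int \dee{t}\dee{\dot{t}}\str\bigl((\mathcal{T}_{\presup{g}\totaldiff,\mathcal{O}})^{2k}\bigr)$, and the double transgression of Lemma \ref{lemma:ftc2} applied to the triple $(\totaldiff,\presup{g}\totaldiff,\mathcal{O})$ then shows that $\cs^g_k(\totaldiff)$ and $2I$ differ by an exact form. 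Without some device of this kind that produces a homogeneous representative of the class, the degree claim cannot be extracted from closedness and bookkeeping alone.
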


\begin{prop}\label{prop:metric}
The cohomology class of $\cs^g_k(\totaldiff)$ does not depend on $g$.
\end{prop}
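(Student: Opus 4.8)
The cohomology class of $\cs^g_k(\totaldiff)$ does not depend on $g$.

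The plan is to show that any two metric choices $g_0$ and $g_1$ give Chern-Simons forms differing by an exact term. The standard strategy for this kind of independence result is a homotopy argument: interpolate between the two metrics and show that the derivative of the Chern-Simons form along the interpolation is exact, so that the integrated difference is also exact.

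First I would connect $g_0$ and $g_1$ by a smooth path of metrics $g_u$, $u \in [0,1]$. The key observation is that the space of metrics on each $E_i$ is convex (the fibrewise positive-definite symmetric forms form a convex cone), so $g_u = (1-u)g_0 + u g_1$ is itself a metric for every $u$, giving a smooth family of nonhomogeneous superconnections $\presup{g_u}\totaldiff$ on $\Etot$. Crucially, since $\totaldiff^\dagger$ is intrinsic and only the transfer isomorphism $g$ changes, every $\presup{g_u}\totaldiff$ is flat, so each $\cs^g_k(\totaldiff)$ computed with $g_u$ is a closed form by Proposition \ref{prop:csclosed}.

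The main step is then to run the same $A \times TI$ construction, but now over the parameter $u$ instead of (or in addition to) $t$. Concretely I would set up the Lie algebroid $A \times TI \to M \times I$ with coordinate $u$ on the new interval, form the nonhomogeneous superconnection that equals $\presup{g_u}\totaldiff$ at parameter $u$, and apply the transgression formula \eqref{eqn:chernsimons} in this enlarged setting. The Berezin integral over $\dot{u}$ then produces a form $\beta$ with $\frac{\partial}{\partial u}\cs^{g_u}_k(\totaldiff) = d_A \beta_u$, exactly because the full object $\str\big((\mathcal{T})^{2k}\big)$ is $d_{A\times TI}$-closed (its supertrace is a closed form, as supertraces of powers of a flat superconnection always are). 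Integrating over $u \in [0,1]$ gives
\begin{equation*}
\cs^{g_1}_k(\totaldiff) - \cs^{g_0}_k(\totaldiff) = d_A \left( \int_0^1 \beta_u \, du \right),
\end{equation*}
which shows the two classes agree in $H(A)$.

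The step I expect to be the main obstacle is the careful bookkeeping of the two grading conventions interacting with the supertrace. Because $g$ is parity-preserving but not degree-preserving, $\presup{g}\totaldiff$ is genuinely nonhomogeneous, and one must verify that the cyclicity of the supertrace (the identity $\str[P,Q]=0$ for the appropriate sign rule) still applies to the nonhomogeneous operators appearing here, so that the relevant supertraces are indeed closed and the transgression argument goes through. Since Propositions \ref{prop:csclosed} and \ref{prop:csodd} already handle the analogous closedness and cyclicity facts in the single-parameter case, I would reduce this step to those lemmas wherever possible, and only verify that adjoining the extra interval $I$ (hence a second pair of Berezin variables) does not disturb the sign conventions. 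This is where the analogy with Crainic and Fernandes \cite{cf} is most directly exploited.
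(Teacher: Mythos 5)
Your overall strategy---join $g_0$ and $g_1$ by a convex path of metrics, observe that every $\presup{g_u}\totaldiff$ is flat, and transgress over the metric parameter---is the same circle of ideas as the paper's proof, which also introduces a smooth path $\gamma(r)$ of metrics and the family $\theta_r = \presup{\gamma(r)}\totaldiff - \presup{g}\totaldiff$. But there is a concrete gap at the step you treat as automatic. You assert that $\tfrac{\partial}{\partial u}\cs^{g_u}_k(\totaldiff) = d_A\beta_u$ ``exactly because'' $\str\bigl(\mathcal{T}^{2k}\bigr)$ is $d$-closed. Closedness of the supertrace (Lemma \ref{lemma:chern}(1), which holds for \emph{any} superconnection, not only flat ones) gives you, after extracting the $\dot{t}\dot{u}$-component and integrating over $t$, an identity of the form $\tfrac{\partial}{\partial u}\cs^{g_u}_k = d_A(\cdots) \pm \bigl(B_{01}|_{t=1} - B_{01}|_{t=0}\bigr)$, where $B_{01}$ is the pure-$\dot{u}$ coefficient of $\str(\mathcal{T}^{2k})$. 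These boundary terms happen to vanish for $k\ge 2$, but for $k=1$ (the modular-class case) the leftover term is $\str\bigl(\tfrac{\partial}{\partial u}\presup{g_u}\totaldiff\bigr)$, which is not zero and whose exactness does not follow from anything you have stated.

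The missing ingredient is the paper's key observation that a change of metric acts by \emph{conjugation}: writing $\langle s,s'\rangle_{\gamma(r)} = \langle u_r(s), s'\rangle_g$ for a path of bundle automorphisms $u_r$, one has $\presup{\gamma(r)}\totaldiff = u_r^{-1}\circ\presup{g}\totaldiff\circ u_r$, hence
\begin{equation*}
\pdiff{}{r}\,\presup{\gamma(r)}\totaldiff \;=\; \Bigl[\presup{\gamma(r)}\totaldiff,\; u_r^{-1}\pdiff{u_r}{r}\Bigr],
\end{equation*}
a supercommutator whose supertrace is $d_A$-exact by Lemma \ref{lemma:strbracket}. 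This is the heart of the paper's argument: the transgression between the two flat superconnections $\presup{g'}\totaldiff$ and $\presup{g}\totaldiff$ collapses explicitly to $\str(\theta_r^{2k-1})$, and the conjugation identity is what shows $\pdiff{}{r}\str(\theta_r^{2k-1})$ is exact. Without it, your homotopy argument does not close; with it, your version goes through as well (it also disposes of the $k=1$ boundary term above). Note also that the difficulty you flag---cyclicity of the supertrace for nonhomogeneous operators---is genuinely handled by Lemma \ref{lemma:strbracket} and is not the bottleneck; the conjugation identity is.
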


In summary, we have well-defined Chern-Simons classes $[\cs_k(\totaldiff)] \in H^{2k-1} (A)$ associated to any flat $A$-superconnection $\totaldiff$.

Let us now return to $\VB$--algebroids.  We have seen in \S\ref{sec:superconn} that, given a $\VB$--algebroid \eqref{lavb}, a choice of a horizontal lift $A \to \hat{A}$ leads to a flat $A$-superconnection on $C[1] \oplus E$.  Therefore, the above procedure applies, and we may obtain Chern-Simons classes.
\begin{thm}\label{thm:cslift}
The Chern-Simons classes $[\cs_k(\totaldiff)]$ do not depend on the choice of horizontal lift.  Therefore the Chern-Simons classes arising from flat $A$-superconnections on $C[1] \oplus E$ are $\VB$--algebroid invariants.
\end{thm}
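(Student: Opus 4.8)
The plan is to reduce the statement to the conjugation invariance of the Chern--Simons forms. The essential input is Theorem~\ref{thm:vbtoflat}: if $h$ and $\nue h$ are two horizontal lifts with associated superconnections $\totaldiff$ and $\nue\totaldiff$, then $\nue\totaldiff = u\circ\totaldiff\circ u^{-1}$, where $u=1+\sigma$ is an automorphism of $\Omega(A)\otimes\Gamma(C[1]\oplus E)$. Note that $u$ is $\Omega(A)$--linear and of total degree $0$ (the form degree $+1$ of $\sigma$ cancels the degree shift in $\Etot=C[1]\oplus E$). Thus it suffices to prove that $[\cs_k(u\totaldiff u^{-1})]=[\cs_k(\totaldiff)]$ for every such $u$. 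The idea is to transport the auxiliary metric by $u$ and then observe that the entire transgression data becomes conjugate, after which invariance follows from the cyclicity of the supertrace.

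First I would record how the adjoint superconnection behaves under conjugation: from the defining relation \eqref{eqn:adjoint} one gets $\nue\totaldiff^\dagger=(u^{-1})^{*}\circ\totaldiff^\dagger\circ u^{*}$ on $\Etot^*$, where $u^{*}$ is the transpose of $u$ with respect to the pairing \eqref{eqn:pairing}. Next, instead of reusing $g$, I would compute $[\cs_k(\nue\totaldiff)]$ with the transported metric $\nue g:=(u^{-1})^{*}\circ g\circ u^{-1}$, which is legitimate because the class is metric--independent by Proposition~\ref{prop:metric}. A short check then gives $\presup{\nue g}\nue\totaldiff=u\circ\presup g\totaldiff\circ u^{-1}$, so that the interpolating superconnection of \eqref{eqn:transg} satisfies $\mathcal{T}_{\nue\totaldiff,\presup{\nue g}\nue\totaldiff}=u\circ\mathcal{T}_{\totaldiff,\presup g\totaldiff}\circ u^{-1}$, where $u$ is extended $t$--independently over $A\times TI$. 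Since $u$ is an even, $\Omega(A)$--linear automorphism, conjugation commutes with taking $2k$-th powers and is invisible to the supertrace, so $\str((\mathcal{T}_{\nue\totaldiff,\presup{\nue g}\nue\totaldiff})^{2k})=\str((\mathcal{T}_{\totaldiff,\presup g\totaldiff})^{2k})$; applying the Berezin integral of \eqref{eqn:chernsimons} yields $\cs^{\nue g}_k(\nue\totaldiff)=\cs^g_k(\totaldiff)$ on the nose.

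The one point requiring care---and the main obstacle---is that $u=1+\sigma$ is not an honest bundle automorphism: $\sigma$ raises form degree, so $\nue g$ is not a positive--definite metric but only a nondegenerate graded--symmetric pairing $\Etot\to\Etot^*$ with higher form--degree components. I therefore expect the real work to be in justifying that $\nue g$ may be used in place of a genuine metric. This can be handled by observing that $\nue g$ is still symmetric and invertible, and that the family $g_t:=(u_t^{-1})^{*}\circ g\circ u_t^{-1}$ with $u_t=1+t\sigma$ joins $g$ to $\nue g$ through nondegenerate symmetric pairings; running the homotopy argument behind Proposition~\ref{prop:metric} along this path shows that $[\cs_k]$ is unchanged. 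Alternatively, one can avoid metrics altogether and show directly that $\tfrac{d}{ds}\cs_k(u_s\totaldiff u_s^{-1})$ is $d_A$--exact, using $\tfrac{d}{ds}\totaldiff_s=[\dot u_s\, u_s^{-1},\totaldiff_s]$ together with the flatness of $\totaldiff_s$. Everything else---the transpose computation and the cyclicity of the supertrace---is formal.
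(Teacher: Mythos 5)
Your proof is correct, and it organizes the argument genuinely differently from the paper, even though both ultimately rest on the same two pillars: the conjugation formula $\nue{\totaldiff} = u \circ \totaldiff \circ u^{-1}$ of Theorem \ref{thm:vbtoflat} and the homotopy--supertrace computation behind Proposition \ref{prop:metric}. The paper first uses the auxiliary self-adjoint superconnection $\mathcal{O}$ and the triple transgression (Lemma \ref{lemma:ftc2}, as in the proof of Proposition \ref{prop:csodd}) to reduce $\cs^g_k(\totaldiff)-\cs^g_k(\nue{\totaldiff})$, modulo exact terms, to the metric-free transgression $\int \dee{s}\dee{\dot{s}} \str\bigl((\mathcal{T}_{\totaldiff,\nue{\totaldiff}})^{2k}\bigr)$, and then shows that this form is exact by running the path $u_r = 1 + r\sigma$ through the derivative identity and Lemma \ref{lemma:strbracket}. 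You instead conjugate the entire package---superconnection, adjoint, pairing, and interpolating superconnection---by $u$, obtaining the on-the-nose equality $\cs^{\nue{g}}_k(\nue{\totaldiff}) = \cs^g_k(\totaldiff)$, and then reduce to metric independence. What your route buys is that it bypasses Proposition \ref{prop:csodd} and the intermediate $\mathcal{O}$ entirely; what it costs is exactly the point you flag: $\nue{g} = (u^{-1})^* \circ g \circ u^{-1}$ is not a metric in the sense of \S\ref{sec:cc}, so Proposition \ref{prop:metric} must be rerun for nondegenerate symmetric $\Omega(A)$-linear pairings. That extension does go through, since the ingredients it uses (Lemma \ref{lemma:strbracket}, part (1) of Lemma \ref{lemma:chern}, Lemmas \ref{lemma:ftc1} and \ref{lemma:ftc2}, and the conjugation identity for $\presup{g_t}\totaldiff$) are all stated for general nonhomogeneous superconnections and $\End(\Etot)$-valued forms; only part (2) of Lemma \ref{lemma:chern}, which is not needed for metric independence, exploits the degree-reversing property of an honest metric. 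One caveat on your second alternative: differentiating $\cs^g_k(u_s \totaldiff u_s^{-1})$ directly with a \emph{fixed} honest metric $g$ is messier than you suggest, because $\presup{g}(u_s \totaldiff u_s^{-1})$ is conjugate to $\presup{g}\totaldiff$ by an automorphism different from $u_s$, so the interpolating superconnection is not globally conjugate; this is precisely why the paper strips out the metric via $\mathcal{O}$ first, and why your first route, which transports $g$ along with $\totaldiff$, is the cleaner one.
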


\section{Classification of regular \texorpdfstring{$\VB$}{VB}--algebroids}
\label{sec:classification}

Let $D$ be a DVB such as \eqref{lavb}, with side bundles $A$ and $E$, and with core bundle $C$, where $A$ is a Lie algebroid.  In this section we classify the $\VB$--algebroid structures on $D$ that are regular in a sense that will be defined below.

As we saw in Theorem \ref{thm:summary}, given a horizontal lift $A \to \hat{A}$, a $\VB$--algebroid structure on $D$ 
is equivalent to choosing $A$--connections $\nabla^c$ and $\nabla^s$ on $C$ and $E$, respectively, an operator $\boundary: C \to E$, and an operator $\Omega \in \Omega^2(A) \otimes \Gamma(\Hom(E,C))$, satisfying \eqref{eqn:conditions}.  As we saw in \S\ref{sec:doDiad}, only $\boundary$ is intrinsically defined, whereas $\nabla^c$, $\nabla^s$, and $\Omega$ depend on the choice of horizontal lift according to \eqref{eqn:changeoper}.  As a consequence, the set of isomorphism classes of $\VB$--algebroid structures on $D$ is in one-to-one correspondence with the set of tuples $(\nabla^s, \nabla^c, \boundary, \Omega)$ satisfying \eqref{eqn:conditions}, modulo the action of $\Wii$ described by \eqref{eqn:changeoper}.

Nevertheless, as was explained in \S \ref{sec:sideandcore}, when $\boundary$ is of constant rank, $\nabla^c$ and $\nabla^s$ induce the following two $A$--connections that depend only on the total $\VB$--algebroid structure, and not on the choice of horizontal lift:  
\begin{itemize}
\item a flat $A$--connection $\nabla^K$ on the subbundle $K := \ker \boundary \subseteq C$, and
\item a flat $A$--connection $\nabla^{\nu}$ on the quotient bundle $\nu := \coker \boundary = E / \im \boundary$.
\end{itemize}

The elements $(A, E, C, \boundary, \nabla^K, \nabla^{\nu})$ are all invariant under isomorphisms of $\VB$--algebroids.  We will see that any such $6$-tuple can always be ``extended'' to a $\VB$--algebroid, and we will classify the extensions up to isomorphism.

\begin{dfn}
A $\VB$--algebroid is called \emph{regular} when the core-anchor $\boundary:C \to E$ has constant rank. 
\end{dfn}

Note that in a regular $\VB$--algebroid, the Lie algebroids $D \to E$ and $A \to M$ do not have to be regular (i.e.\ the anchor maps do not have to have constant rank). For example, a $\VB$--algebroid where $\boundary$ is an isomorphism is clearly regular; however, in such a $\VB$--algebroid the anchors $\rho_D$ and $\rho_A$ need not be of constant rank.  This fact will be more clearly illustrated in \S\ref{sec:type1}.

There are two special types of regular $\VB$--algebroids.  We will describe them now, and then we will show that any regular $\VB$--algebroid can be uniquely decomposed as a direct sum of these two special types of $\VB$--algebroids.  This will allow us to give a complete description of all regular $\VB$--algebroids up to isomorphism.

\subsection{\texorpdfstring{$\VB$}{VB}--algebroids of type 1} \label{sec:type1}

\begin{dfn} We say that a $\VB$--algebroid is \emph{of type 1} when the core-anchor $\boundary$ is an isomorphism of vector bundles.
\end{dfn}

There is one canonical example (which turns out to be the only one).  Let $A \to M$ be a Lie algebroid and $E \to M$ be a vector bundle.  Consider the pullback of $TE$ by the anchor $\rho_A$ of $A$ in the following diagram:
\begin{equation}
\xymatrix{\rho_A^*(TE) \ar[r] \ar[d] & TE \ar[d] \\ A \ar[r]^{\rho_A} & TM}
\end{equation}
Then there is a natural pullback Lie algebroid structure (see \cite{higgins-mac}) on $\rho_A^*(TE) \to E$ such that
\begin{equation}\label{eqn:rhostar}
\xymatrix{\rho_A^*(TE) \ar[r] \ar[d] & E \ar[d] \\ A \ar[r] & M}
\end{equation}
is a $\VB$--algebroid of type 1.  The core of \eqref{eqn:rhostar} may be canonically identified with $E$, and the core-anchor map\footnote{Note that a minus sign already appears in the definition of the core-anchor (Definition \ref{dfn:coreanchor}).} is $-\id_E$.

Let us try to construct the most general $\VB$--algebroid of type 1.  Let us fix the sides $A$ and $E$.  We may assume that $C=E$ and $\boundary = -1$.  Now we need to define $\nabla^s$, $\nabla^c$, and $\Omega$ satisfying \eqref{eqn:conditions}.  In this case, the equations become:
\begin{itemize}
\item $\nabla^s = \nabla^c$,
\item and -$\Omega$ is the curvature of $\nabla^s$.
\end{itemize}

Hence, putting a $\VB$--algebroid structure on $A \oplus E \oplus E$ is the same thing as defining an $A$--connection on $E$.  If we want to classify them up to isomorphism we need to include the action of $\Gamma(A^* \otimes E^* \otimes E) = \Omega^1(A) \otimes \End(E)$ by \eqref{eqn:changeoper}.  Given any two $A$--connections $\nabla$ and $\nue{\nabla}$ on $E$ there exist a unique $\sigma \in \Omega^1(A) \otimes \End(E)$ such that
$\nue{\nabla}=\nabla + \sigma$. 
In other words:

\begin{prop}
Given side bundles $A$ and $E$, there exists a unique $\VB$--algebroid of type 1 up to isomorphism, namely $\rho_A^*(TE)$.
\end{prop}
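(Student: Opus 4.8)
The plan is to show that the data classifying a type~1 $\VB$--algebroid reduces, after the standard normalizations, to a single $A$--connection on $E$, and that the residual gauge freedom acts simply transitively on the set of such connections. By Theorem~\ref{thm:summary}, once we fix a horizontal lift, a $\VB$--algebroid structure on $D$ with sides $A$, $E$ and core $C$ is the same as a tuple $(\nabla^s, \nabla^c, \boundary, \Omega)$ satisfying \eqref{eqn:conditions}, and by \S\ref{sec:doDiad} the change of horizontal lift acts on such tuples via \eqref{eqn:changeoper}. So the whole statement is really about the quotient of the solution set of \eqref{eqn:conditions} by the $\Wii$--action, in the special case where $\boundary$ is an isomorphism.

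First I would make the normalization explicit. Since $\boundary : C \to E$ is by hypothesis an isomorphism of vector bundles, I may identify $C$ with $E$ and take $\boundary = -\id_E$; this is legitimate because $\boundary$ is the only intrinsically defined piece of data, and any two type~1 $\VB$--algebroids with the same sides can be arranged to have identical $\boundary$. With $\boundary = -\id_E$, the four conditions in \eqref{eqn:conditions} collapse as stated in the text: the first equation $\boundary\circ\nabla^c = \nabla^s\circ\boundary$ forces $\nabla^c = \nabla^s$; the second and third, $F^c = \Omega\circ\boundary$ and $F^s = \boundary\circ\Omega$, both say $\Omega = -F^s$ (consistently, since $\nabla^c=\nabla^s$ gives $F^c=F^s$); and the fourth equation $D^c\Omega + \Omega D^s = 0$ becomes the Bianchi identity for $\nabla^s$, hence is automatic. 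Thus a solution of \eqref{eqn:conditions} with $\boundary=-\id_E$ is nothing but a choice of $A$--connection $\nabla^s$ on $E$, with $\nabla^c$ and $\Omega$ determined by it. This establishes the bijection between $\VB$--algebroid structures on $A\oplus E\oplus E$ (with $\boundary=-\id_E$) and $A$--connections on $E$.

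Next I would quotient by isomorphism. The remaining freedom is the $\Wii = \Omega^1(A)\otimes\End(E)$ action of \eqref{eqn:changeoper} coming from change of horizontal lift. It suffices to check that this action is transitive on the set of $A$--connections on $E$: given $\sigma\in\Omega^1(A)\otimes\End(E)$, the first line of \eqref{eqn:changeoper} with $\boundary=-\id_E$ reads $\nue{D^s} = D^s - \boundary\sigma = D^s + \sigma$, so $\nue\nabla^s = \nabla^s + \sigma$, and one verifies that the induced changes in $\nabla^c$ and $\Omega$ are exactly those forced by $\nue\nabla^s$ (so the transformed tuple again lies in the normalized solution set). Since $A$--connections on $E$ form an affine space modelled on $\Omega^1(A)\otimes\End(E) = \Wii$, and $\sigma\mapsto\nabla^s+\sigma$ is precisely the affine action, any two connections differ by a unique $\sigma$; hence the action is simply transitive and there is exactly one isomorphism class. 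Finally, I would identify this unique class with the canonical model $\rho_A^*(TE)$ of \eqref{eqn:rhostar}: since $\rho_A^*(TE)$ is a type~1 $\VB$--algebroid with sides $A$ and $E$, it represents the single isomorphism class, completing the proof.

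I expect the only genuine subtlety to be bookkeeping: checking that the $\Wii$--action, which a priori moves all of $(\nabla^s,\nabla^c,\boundary,\Omega)$, really does preserve the normalized slice $\{\boundary=-\id_E,\ \nabla^c=\nabla^s,\ \Omega=-F^s\}$ and restricts there to the affine translation $\nabla^s\mapsto\nabla^s+\sigma$. This follows by direct substitution of $\boundary=-\id_E$ into \eqref{eqn:changeoper}, using that $\nue\boundary=\boundary$ always, but it is the one place where one must confirm compatibility rather than merely read off a formula; everything else is the affine-space argument for connections, which is standard.
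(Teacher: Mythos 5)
Your proposal is correct and follows essentially the same route as the paper: normalize $C=E$ and $\boundary=-\id$, observe that the conditions \eqref{eqn:conditions} collapse to the choice of a single $A$--connection on $E$ (with $\nabla^c=\nabla^s$ and $\Omega=-F^s$), and then use that the $\Wii=\Omega^1(A)\otimes\End(E)$ action of \eqref{eqn:changeoper} is simply transitive on the affine space of such connections. The only difference is that you spell out two points the paper leaves implicit --- that the fourth condition becomes the Bianchi identity and that the gauge action preserves the normalized slice --- which is a welcome but not essential elaboration.
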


\subsection{\texorpdfstring{$\VB$}{VB}--algebroids of type 0} \label{sec:type0}

\begin{dfn} We say that a $\VB$--algebroid is \emph{of type 0} when the core anchor is zero.
\end{dfn}

Fix the sides $A$ and $E$, and the core $C$.  Let us try to construct the most general $\VB$--algebroid with $\boundary = 0$.
We need to define $\nabla^s$, $\nabla^c$, and $\Omega$ satisfying \eqref{eqn:conditions}.  In this case, the equations become:
\begin{itemize}
\item $\nabla^s$ is a flat $A$--connection on $E$,
\item $\nabla^c$ is a flat $A$--connection on $C$,
  \item $\displaystyle{D^c \circ \Omega + \Omega \circ D^s=0}$ \hfill \refstepcounter{equation}{\rm (\theequation)} \label{eqn:dOmega}
\end{itemize}

To classify these $\VB$--algebroids up to isomorphism we need to include the action of $\Wii$ by \eqref{eqn:changeoper}.    In this case, if $\sigma \in \Wii$ acts on $(\nabla^s,\nabla^c, \Omega)$, the connnections $\nabla^s$ and $\nabla^c$ remain invariant, whereas $\Omega$ becomes:
\begin{equation} \label{eqn:dsigma}
\nue{\Omega} = \Omega + \sigma D^s - D^c \sigma
\end{equation}

Equations \eqref{eqn:dOmega} and \eqref{eqn:dsigma} can be interpreted in terms of cohomology.  Namely, the flat $A$-connections on $C$ and $E$ induce a flat $A$-connection on $\Hom(E,C)$, whose covariant derivative $D$ is given by the equation
\begin{equation}
D \alpha : = \alpha D^s + (-1)^p D^c \alpha
\end{equation}
for $\alpha \in \Omega^p(A) \otimes \Gamma( \Hom(E,C))$.  Then \eqref{eqn:dOmega} says that $D \Omega = 0$, whereas \eqref{eqn:dsigma} says that $\nue{\Omega} = \Omega + D\sigma$.  Hence, the cohomology class $\left[ \Omega \right] \in H^2(A; \Hom(E,C))$ is well-defined and invariant up to isomorphism of $\VB$--algebroids.  This gives us the following result:

\begin{prop}
Type $0$ $\VB$--algebroids with sides $A$ and $E$, and core $C$ are classified up to isomorphism by triples $(\nabla^s, \nabla^c, [\Omega])$, where
\begin{itemize}
\item $\nabla^s$ is a flat $A$--connection on $E$,
\item $\nabla^c$ is a flat $A$--connection on $C$,
\item $[\Omega]$ is a cohomology class in $H^2(A; \Hom(E,C))$.
\end{itemize}
\end{prop}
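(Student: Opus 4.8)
The plan is to establish the classification in two directions: first showing that every type $0$ $\VB$--algebroid gives rise to such a triple (well-definedness of the invariants), and then showing that the triple determines the $\VB$--algebroid up to isomorphism (injectivity and surjectivity of the correspondence). Much of the groundwork is already laid by the discussion immediately preceding the statement, so the proof is largely a matter of assembling the pieces and verifying that the cohomological reformulation is faithful.

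First I would verify that the assignment is \emph{well-defined}. Starting from a type $0$ $\VB$--algebroid and a choice of horizontal lift, Theorem \ref{thm:summary} produces data $(\nabla^s, \nabla^c, \boundary, \Omega)$ satisfying \eqref{eqn:conditions}; since $\boundary = 0$, the conditions collapse to flatness of $\nabla^s$ and $\nabla^c$ (so $F^s = F^c = 0$) together with \eqref{eqn:dOmega}, exactly as stated. The connections $\nabla^s$ and $\nabla^c$ are canonical (they are the representations $\nabla^\nu$ and $\nabla^K$ of \S\ref{sec:sideandcore}, since $\boundary = 0$ forces $K = C$ and $\nu = E$), hence independent of the horizontal lift. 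For $\Omega$, the transformation rule \eqref{eqn:changeoper} specializes, when $\boundary = 0$, to \eqref{eqn:dsigma}, i.e. $\nue{\Omega} = \Omega + D\sigma$ where $D$ is the covariant derivative of the induced flat connection on $\Hom(E,C)$. Since $D^2 = 0$ (flatness) and \eqref{eqn:dOmega} reads $D\Omega = 0$, the class $[\Omega] \in H^2(A; \Hom(E,C))$ is well-defined and unchanged under change of horizontal lift; thus the triple $(\nabla^s, \nabla^c, [\Omega])$ is an isomorphism invariant.

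Next I would prove \emph{surjectivity and injectivity} together. Given any triple as in the statement, pick a cocycle representative $\Omega$ of $[\Omega]$; then $(\nabla^s, \nabla^c, \boundary = 0, \Omega)$ satisfies \eqref{eqn:conditions}, so by Theorem \ref{thm:summary} it defines a $\VB$--algebroid structure on $A \oplus E \oplus C$, manifestly of type $0$. This gives surjectivity, and independence of the choice of representative follows from the well-definedness above. For injectivity, suppose two type $0$ $\VB$--algebroids yield the same triple $(\nabla^s, \nabla^c, [\Omega])$. Each is described, after a choice of horizontal lift, by data with the same flat connections and cohomologous $\Omega$'s; choosing $\sigma \in \Wii$ with $\nue{\Omega} - \Omega = D\sigma$ and applying \eqref{eqn:dsigma} realizes the isomorphism via the change-of-lift automorphism $u = 1 + \sigma$ of Theorem \ref{thm:vbtoflat}, which fixes $\nabla^s$ and $\nabla^c$ and sends one $\Omega$ to the other. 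Hence the two structures are isomorphic.

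The main obstacle, though it is more bookkeeping than genuine difficulty, is making sure the cohomological reformulation is \emph{exactly} faithful: one must check that the operator $D$ defined by $D\alpha = \alpha D^s + (-1)^p D^c \alpha$ really is the covariant derivative of the tensor-product flat connection on $\Hom(E,C) = E^* \otimes C$, that its square vanishes (which uses flatness of both $\nabla^s$ and $\nabla^c$, not merely \eqref{eqn:conditions} with $\boundary=0$), and that \eqref{eqn:dOmega} and \eqref{eqn:dsigma} translate without sign errors into $D\Omega = 0$ and $\nue{\Omega} = \Omega + D\sigma$. The signs here are delicate because the grading $C[1]$ shifts degrees, and the Leibniz rule \eqref{eqn:superleibniz} introduces parity factors; I would verify these by evaluating on $\iota_X \iota_Y$ for $X, Y \in \Gamma(A)$ and matching against the explicit formulas \eqref{eqn:changeDc}--\eqref{eqn:changeOmega} in the proof of Theorem \ref{thm:vbtoflat}, specialized to $\boundary = 0$. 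Once these identifications are confirmed, the classification is immediate from the general correspondence in Theorem \ref{thm:summary} together with Theorem \ref{thm:vbtoflat}.
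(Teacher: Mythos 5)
Your proposal is correct and follows essentially the same route as the paper: reduce to the data $(\nabla^s,\nabla^c,\boundary=0,\Omega)$ via Theorem \ref{thm:summary}, observe that the conditions \eqref{eqn:conditions} collapse to flatness of the two connections plus $D\Omega=0$, note that the $\Wii$-action fixes the connections and shifts $\Omega$ by $D\sigma$, and conclude cohomologically. Your explicit separation into well-definedness, surjectivity, and injectivity just spells out what the paper leaves implicit.
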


\subsection{The general case}

Given two $\VB$--algebroids
\begin{equation*}
\xymatrix{D_1 \ar[r] \ar[d] & E_1 \ar[d] \\ A \ar[r] & M}
\quad \quad
\xymatrix{D_2 \ar[r] \ar[d] & E_2 \ar[d] \\ A \ar[r] & M}
\end{equation*}
over the same Lie algebroid $A$, we can obtain the \emph{direct sum} $\VB$--algebroid
\begin{equation*}
\xymatrix{D_1 \oplus_A D_2 \ar[r] \ar[d] & E_1 \oplus_M E_2 \ar[d] \\ A \ar[r] & M}.
\end{equation*}
Note that the core of $D_1 \oplus_A D_2$ is the direct sum of the cores of $D_1$ and $D_2$.

\begin{thm} \label{thm:0and1}
Given a regular $\VB$--algebroid $D$, there exist unique (up to isomorphism) $\VB$--algebroids $D_0$ of type 0, and $D_1$ of type 1, such that $D$ is isomorphic to $D_0 \oplus_A D_1$.
\end{thm}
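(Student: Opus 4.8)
The plan is to prove Theorem \ref{thm:0and1} by combining the intrinsic splitting of the core-anchor $\boundary$ with the classification data of Theorem \ref{thm:summary}(3). Since $D$ is regular, $\boundary: C \to E$ has constant rank, so $K := \ker \boundary$ is a subbundle of $C$ and $\im \boundary$ is a subbundle of $E$; choosing metrics, I would split $C \cong K \oplus K^\perp$ and $E \cong \im\boundary \oplus \nu$, where $\nu = \coker \boundary$. On these complements, $\boundary$ restricts to a vector bundle isomorphism $K^\perp \iso \im \boundary$. This immediately suggests the candidates: set $E_1 := \im\boundary$ with core $C_1 := K^\perp$, on which $\boundary$ is an isomorphism (the type 1 part), and set $E_0 := \nu$ with core $C_0 := K$, on which $\boundary$ vanishes (the type 0 part). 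The first task is therefore to upgrade this metric splitting of the underlying bundles to a splitting of the entire $\VB$--algebroid structure.

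To do this, I would work with the classification tuple $(\nabla^s, \nabla^c, \boundary, \Omega)$ satisfying \eqref{eqn:conditions} after fixing a horizontal lift. The goal is to show that, up to the isomorphism action \eqref{eqn:changeoper} of $\Wii$, one can arrange the connections and $\Omega$ to be block-diagonal with respect to the decompositions $C = K \oplus K^\perp$ and $E = \nu \oplus \im\boundary$. The key structural input is the pair of relations $\boundary \circ \nabla^c_X = \nabla^s_X \circ \boundary$ (the first line of \eqref{eqn:conditions}) together with $F^c = \Omega \circ \boundary$ and $F^s = \boundary \circ \Omega$. The relation $\boundary \nabla^c = \nabla^s \boundary$ forces $\nabla^c$ to preserve $K = \ker\boundary$ and $\nabla^s$ to preserve $\im\boundary$, so the induced flat connections $\nabla^K$ and $\nabla^\nu$ are well-defined, as already noted in \S\ref{sec:sideandcore}. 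The content I must extract is that one can choose a horizontal lift (equivalently, use the gauge freedom $\sigma \in \Wii$) so that $\nabla^c$ and $\nabla^s$ split as direct sums compatible with the two summands, and then verify that on the $K^\perp \cong \im\boundary$ block the data is exactly that of $\rho_A^*(TE_1)$ (namely $\nabla^c = \nabla^s$ and $\Omega$ equals minus the curvature, matching \S\ref{sec:type1}), while on the $K, \nu$ block $\boundary = 0$, the connections are flat, and $\Omega$ is $D$-closed, matching \S\ref{sec:type0}.

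The main obstacle I anticipate is the cross term in $\Omega$ that couples the two blocks: a priori $\Omega \in \Omega^2(A; \Hom(E,C))$ has off-diagonal components $\Hom(\nu, K^\perp)$, $\Hom(\im\boundary, K)$, etc., and the direct-sum decomposition requires these to be gauged away by a suitable $\sigma$. I expect to handle this via the last equation of \eqref{eqn:conditions}, $D^c \Omega + \Omega D^s = 0$, which is a cocycle condition, together with the transformation law $\nue\Omega = \Omega + D\sigma$ from \eqref{eqn:dsigma}; the claim is that the obstruction to splitting lives in a cohomology group that vanishes because $\boundary$ is an isomorphism on the type 1 block (so the relevant curvature term $\Omega \circ \boundary$ is invertible on that block and can be used to solve for the needed $\sigma$). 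Concretely, on the block where $\boundary$ is invertible, $F^c = \Omega \boundary$ determines $\Omega$ rigidly in terms of $\nabla^c$, leaving no moduli, which is precisely the uniqueness statement of \S\ref{sec:type1}; this rigidity is what kills the cross terms.

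Finally, for uniqueness of the decomposition $D \cong D_0 \oplus_A D_1$ up to isomorphism, I would argue that the summands are recovered intrinsically from $\boundary$: the type 1 summand $D_1$ has core and side forced to be $\im\boundary$ with $\boundary$ an isomorphism, while the type 0 summand $D_0$ carries core $\ker\boundary$ and side $\coker\boundary$. Since Proposition in \S\ref{sec:type1} shows the type 1 piece is unique up to isomorphism given its sides, and the Proposition in \S\ref{sec:type0} classifies the type 0 piece by $(\nabla^{\nu}, \nabla^{K}, [\Omega])$, all of which are isomorphism invariants of $D$ (the connections being $\nabla^K$ and $\nabla^\nu$, and $[\Omega]$ being the induced class in $H^2(A; \Hom(\nu, K))$), the decomposition is determined up to isomorphism. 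I would close by remarking that the isomorphism $D \iso D_0 \oplus_A D_1$ is exactly the gauge transformation $u = 1 + \sigma$ of Theorem \ref{thm:vbtoflat} that block-diagonalizes the superconnection.
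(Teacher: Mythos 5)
Your overall strategy is the paper's: split $C$ and $E$ using the constant-rank core-anchor, block-diagonalize the classification data $(\nabla^s,\nabla^c,\boundary,\Omega)$ by a change of horizontal lift, and recover uniqueness from the intrinsic invariants $(F,\nu,K,\nabla^\nu,\nabla^K,[\omega])$. But the mechanism you propose for the central step is not the right one. The off-diagonal part of $\Omega$ is not removed by a cohomological vanishing argument: the transformation law $\nue{\Omega}=\Omega+D\sigma$ that you invoke is the special case \eqref{eqn:dsigma}, valid only when $\boundary=0$; in general \eqref{eqn:changeoper} gives $\nue{\Omega}=\Omega-D^c\sigma+\sigma D^s-\sigma\boundary\sigma$. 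Moreover, once you have spent the gauge freedom making the connections block-diagonal, the residual freedom consists of block-diagonal $\sigma$'s, and these cannot alter the off-diagonal blocks of $\Omega$ --- so if those blocks were nonzero at that stage, your plan would be stuck. What actually happens is that no further gauging is needed: the identities $F^c_{X,Y}=\Omega_{X,Y}\circ\boundary$ and $F^s_{X,Y}=\boundary\circ\Omega_{X,Y}$ read off the $\Hom(F,K)$ and $\Hom(\nu,F)$ blocks of $\Omega$ from the off-diagonal blocks of the curvatures, which vanish as soon as the connections are diagonal. (Your closing remark about rigidity on the invertible block is exactly this observation; the cohomological framing preceding it should be discarded.) You also leave the diagonalization of the connections themselves as an assertion; this is the one explicit computation in the existence proof --- take $\sigma$ to be the anti-diagonal matrix built from the off-diagonal blocks $\Gamma$ of $\nabla^c$ and $\Lambda$ of $\nabla^s$, as in \eqref{eqn:antidiagsigma}, and check it against $\nue{\nabla^c}=\nabla^c+\sigma\boundary$ and $\nue{\nabla^s}=\nabla^s-\boundary\sigma$.

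For uniqueness, you assert that $[\omega]\in H^2(A;\Hom(\nu,K))$ is an isomorphism invariant of $D$, but that is the substantive claim: $\omega$ is defined only after choosing splittings of $K\to C\to F$ and $F\to E\to\nu$ and a horizontal lift, and one must check that changing any of these alters $\omega$ only by a $D$-exact term. The paper verifies this by computing the effect of a block-diagonal change of lift (which realizes exactly the coboundaries, so the class is well defined and every representative occurs) and of a change of splitting (which leaves the upper-left block of $\Omega$ untouched). Without this verification the uniqueness half is incomplete.
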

\begin{proof} \phantom{1} 

$\bullet$ {\bf Existence.} \\
Let $D$ be a regular $\VB$--algebroid as in \eqref{lavb}.  Then the core-anchor $\boundary: C \to E$ induces the following vector bundles:
$K := \ker \boundary \subseteq C$,  
$F:= \im \boundary \subseteq E$, and
$\nu := \coker \boundary = E / F$.  They fit into the short exact sequences:
\begin{equation}\label{eqn:knushort}
\begin{split}
    &\xymatrix{K \ar[r] & C \ar[r] & F}, \\
&\xymatrix{F \ar[r] & E \ar[r] & \nu}.
\end{split}
\end{equation}
Let us choose splittings of the sequences \eqref{eqn:knushort}, which would give isomorphisms
\begin{equation} \label{eqn:1TI}
\begin{split} 
C & \approx K \oplus F \\
E & \approx \nu \oplus F 
\end{split}
\end{equation}

Next we make a choice of horizontal lift $X \in \Gamma(A) \to \hat{X} \in \Gamma(\hat{A})$.  As we saw in Theorem \ref{thm:summary}, the $\VB$--algebroid structure in the DVB $D$ is determined by the tuple $(\nabla^s, \nabla^c, \boundary, \Omega)$.  We write a ``block-matrix decomposition'' of each one of these operators with respect to the direct sums in \eqref{eqn:1TI}:
\begin{equation} \label{eqn:blockmatrix}
\nabla^s = \begin{pmatrix} \nabla^{\nu} & 0 \\ \Lambda  & \nabla^F \end{pmatrix}, \quad
\nabla^c = \begin{pmatrix} \nabla^K & \Gamma \\ 0 & \nabla^F \end{pmatrix}, \quad
\boundary = \begin{pmatrix} 0 & 0 \\ 0 & -1 \end{pmatrix}, \quad
\Omega = \begin{pmatrix} \alpha & \star  \\ \star & \star \end{pmatrix}. 
\end{equation}
In \eqref{eqn:blockmatrix}, $\star$ means an unspecified operator.  The zeros in $\nabla^s$ and $\nabla^c$ are a consequence of the first equation in \eqref{eqn:conditions}.  The bottom--right blocks of $\nabla^s$ and $\nabla^c$ (which we denote $\nabla^F$) are the same, also because of the first equation in \eqref{eqn:conditions}.  The components $\alpha$, $\Lambda$, and $\Gamma$ are described as follows:
\begin{equation*}
\begin{split}
\alpha & \in    \Lambda^2 \Gamma(A) \otimes \Gamma(\nu) \to \Gamma(K) \\
\Lambda & \in  \Gamma(A) \otimes \Gamma(\nu) \to \Gamma(F) \\
\Gamma & \in   \Gamma(A) \otimes \Gamma(F) \to \Gamma(K)
\end{split}
\end{equation*}
The operators $\alpha$, $\Lambda$, and $\Gamma$ depend on the choice of splittings of \eqref{eqn:knushort}, as well as on the choice of horizontal lift.  The $A$-connection $\nabla^F$ depends on the choice of horizontal lift.

If the operators in \eqref{eqn:blockmatrix} were block-diagonal, then we could break them apart to form two separate $\VB$-algebroid structures, one with side bundle $K$ and core $\nu$, and the other with $F$ as both the side and core.  Luckily, it is possible to make all the operators in \eqref{eqn:blockmatrix} block-diagonal via a change of horizontal lift, as follows.  

As we explained in \textsection \ref{sec:doDiad}, a change of horizontal lift corresponds to an element $\sigma \in \Wii$.  If $\sigma$ is written in block matrix form as
\begin{equation} \label{eqn:gensigma}
\sigma = \begin{pmatrix} \sigma^{11} & \sigma^{12} \\ \sigma^{21} & \sigma^{22} \end{pmatrix},
\end{equation}
then, according to \eqref{eqn:changeDc} and \eqref{eqn:changeDs}, the side and core connections for the new horizontal lift will be
\begin{align*}
\nue{\nabla^s} &= \nabla^s + \boundary \sigma =
\begin{pmatrix} \nabla^{\nu} & 0 \\ \Lambda & \nabla^F \end{pmatrix} +
\begin{pmatrix} 0 & 0 \\ -\sigma^{21} & -\sigma^{22} \end{pmatrix}, 
\\
\nue{\nabla^c} & = \nabla^c + \sigma \boundary  =
\begin{pmatrix} \nabla^K & \Gamma \\ 0 & \nabla^F \end{pmatrix} +
\begin{pmatrix} 0 & -\sigma^{12} \\ 0 & -\sigma^{22} \end{pmatrix}.
\end{align*}
Therefore, if we choose 
\begin{equation} \label{eqn:antidiagsigma}
\sigma = \begin{pmatrix} 0 & \Gamma \\ \Lambda & 0 \end{pmatrix},
\end{equation}
it will make the new connections $\nue{\nabla^s}$ and $\nue{\nabla^c}$ block--diagonal.  Consequently, the second and third equations in \eqref{eqn:conditions} imply that $\nue{\Omega}$ will also be block--diagonal.  In particular, $\nue{\Omega}$ will necessarily take the form
\begin{equation} \label{eqn:blockdiagonal}
\nue{\Omega} = \begin{pmatrix} \omega & 0  \\ 0 & -R^F \end{pmatrix},
\end{equation}
where $R^F$ is the curvature of $\nabla^F$.  Using \eqref{eqn:antidiagsigma} in \eqref{eqn:changeOmega}, we can relate the upper-left block $\omega$ of $\nue{\Omega}$ to the upper-left block $\alpha$ of $\Omega$ in the following way:
\begin{equation} \label{eqn:defomega}
\omega_{X,Y} = \alpha_{X,Y} - \Gamma_X \circ \Lambda_Y + \Gamma_Y \circ \Lambda_X.
\end{equation}

Since $\nue{\nabla^s}$, $\nue{\nabla^c}$, and $\nue{\Omega}$ are block diagonal, their diagonal blocks give us the data for two $\VB$-algebroids: a type 0 $\VB$-algebroid $D_0$, with side bundle $\nu$ and core bundle $K$, and a type 1 $\VB$-algebroid $D_1$, with $F$ as both side and core.

$\bullet$  {\bf Uniqueness.} \\
Based on the classification of $\VB$-algebroids of type 0 (\S\ref{sec:type0}) and type 1 (\S\ref{sec:type1}), we may characterize the $\VB$-algebroids $D_0$ and $D_1$ up to isomorphism as follows:
\begin{itemize}
\item $D_1$ is determined up to isomorphism by its side bundle $F$,
\item $D_0$ is determined up to isomorphism by its side bundle $\nu$ and core bundle $K$, the flat $A$--connections $\nabla^{\nu}$ and $\nabla^K$, and the cohomology class $[\omega] \in H^2(A;\Hom(\nu,K))$, given by \eqref{eqn:defomega}.
\end{itemize}
We have already seen that the bundles $F$, $\nu$, $K$, and the flat $A$--connections $\nabla^{\nu}$ and $\nabla^K$ are canonical.  To complete the proof we need to show that the cohomology class of $\omega$ does not depend on the choice of splittings of \eqref{eqn:knushort}, nor on the choice of horizontal lift.

First, the cohomology class does not depend on the choice of horizontal lift, thanks to our analysis of type 0 and type 1 $\VB$--algebroids. 
If we fix the choice of complements but change to a different horizontal lift that still makes the operators in \eqref{eqn:blockdiagonal} block-diagonal, this corresponds to choosing arbitrary blocks in the main diagonal of \eqref{eqn:gensigma}.  Notice that the cohomology class of $\omega$ does not change, and in fact all the representatives of the cohomology class of $\omega$ may be obtained in this way.

Second, suppose that we have chosen splittings of the sequences \eqref{eqn:knushort} and a horizontal lift such that the operators are already block--diagonal like in \eqref{eqn:blockdiagonal}.  Then a change of splitting of the second sequence in \eqref{eqn:knushort} may be expressed in block form by a matrix
\begin{equation*}
\begin{pmatrix} 1 & 0 \\ g & 1 \end{pmatrix} 
\end{equation*}
for some linear map $g: \nu \to F$.  Under the change of splitting, $\nabla^c$ and $\boundary$ will have the same matrix forms, whereas the new block matrix forms for $\nabla^s$ and $\Omega$ will be
\begin{equation*}
\nabla^s = 
\begin{pmatrix} 1 & 0 \\ -g & 1 \end{pmatrix}
\begin{pmatrix} \nabla^{\nu} & 0 \\ 0 & \nabla^F \end{pmatrix}
\begin{pmatrix} 1 & 0 \\ g & 1 \end{pmatrix} =
\begin{pmatrix} \nabla^{\nu} & 0 \\ -g \nabla^{\nu} + \nabla^F g & 1 \end{pmatrix}
\end{equation*}
and $\Omega$ will be:
\begin{equation*}
\Omega =
\begin{pmatrix} \omega & 0 \\ 0 & -R^F \end{pmatrix}
\begin{pmatrix} 1 & 0 \\ g & 1 \end{pmatrix} =
\begin{pmatrix} \omega & 0 \\ \omega - R^F g & -R^F \end{pmatrix}
\end{equation*}

It is clear that $\omega$, as defined by \eqref{eqn:defomega}, stays the same.  A similar calculation shows that $\omega$ does not depend on the choice of splitting of the first sequence in \eqref{eqn:knushort}.

Notice that as a consequence of the above analysis, if we were to start with an arbitrary choice of splittings of \eqref{eqn:knushort} \emph{and} an arbitrary choice of horizontal lift, then $\omega$ may change under a change of splitting, but only by an exact term.  This can alternatively be shown by a direct (and lengthy) calculation.
\end{proof}

\begin{cor} [Classification of regular $\VB$--algebroids] \label{cor:classification}
A regular $\VB$--algebroid is described, up to isomorphism, by a unique tuple 
$(M,A,E,C,\boundary,\nabla^K, \nabla^{\nu}, [\omega ])$, where
\begin{itemize}
\item $M$ is a manifold,
\item $A \to M$ is a Lie algebroid,
\item $E \to M$ and $C \to M$ are vector bundles,
\item $\boundary: C \to E$ is a morphism of vector bundles,
\item $\nabla^K$ is a flat $A$--connection on $K \defequal \Ker \boundary$,
\item $\nabla^{\nu}$ is a flat $A$--connection on $\nu \defequal \coKer \boundary$,
\item $[\omega ]$ is a cohomology class in $H^2(A; \Hom(\nu,K))$.
\end{itemize}
\end{cor}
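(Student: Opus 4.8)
The plan is to obtain the corollary as essentially a bookkeeping consequence of Theorem \ref{thm:0and1} together with the classifications of type 0 and type 1 $\VB$--algebroids from \S\ref{sec:type0} and \S\ref{sec:type1}. Since the substantive work has already been done in those results, the task reduces to verifying that the listed tuple is exactly the data that survives the decomposition $D \cong D_0 \oplus_A D_1$, and that the converse assignment realizes every tuple.

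First I would confirm that each entry of the tuple is a genuine isomorphism invariant of a regular $\VB$--algebroid $D$. The manifold $M$, the Lie algebroid $A$, the side bundle $E$, and the core bundle $C$ are part of the underlying data of $D$; the core-anchor $\boundary : C \to E$ is intrinsic by Definition \ref{dfn:coreanchor}, and regularity forces it to have constant rank, so that $K = \Ker\boundary$, $F = \im\boundary$, and $\nu = \coKer\boundary$ are well-defined vector bundles. The flat connections $\nabla^K$ on $K$ and $\nabla^\nu$ on $\nu$ are the canonical induced connections of \S\ref{sec:bigsideandcore}, which do not depend on any choice of horizontal lift. Thus $(M, A, E, C, \boundary, \nabla^K, \nabla^\nu)$ are invariants before $[\omega]$ even enters.

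Next I would apply Theorem \ref{thm:0and1} to write $D \cong D_0 \oplus_A D_1$, with $D_0$ of type 0 and $D_1$ of type 1, uniquely up to isomorphism. By the type 1 classification (\S\ref{sec:type1}), $D_1$ is determined up to isomorphism by its single side bundle $F = \im\boundary$, which is recovered from $\boundary$; hence the type 1 summand contributes no new information. By the type 0 classification (\S\ref{sec:type0}), $D_0$---whose side bundle is $\nu$ and core bundle is $K$---is determined up to isomorphism by the flat connections $\nabla^\nu$, $\nabla^K$ and a cohomology class, which in this case lives in $H^2(A; \Hom(\nu, K))$ and is exactly the class $[\omega]$ produced in the proof of Theorem \ref{thm:0and1}. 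Assembling the two summands shows that the tuple determines $D$ up to isomorphism.

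For the converse realizability, I would run the construction backwards: given a tuple with $\boundary$ of constant rank, set $F = \im\boundary$, build the type 1 piece $D_1$ on $F$ via \S\ref{sec:type1} and the type 0 piece $D_0$ from $(\nu, K, \nabla^\nu, \nabla^K, [\omega])$ via \S\ref{sec:type0}, and put $D = D_0 \oplus_A D_1$. The only point requiring care is checking that the canonical bundles and connections attached to this $D$ reproduce the prescribed data, which follows immediately from how the core-anchor and the induced connections behave under direct sums. I do not anticipate any real obstacle: the entire weight of the argument rests on Theorem \ref{thm:0and1}, which is already established, and the corollary is its clean repackaging in terms of intrinsic invariants.
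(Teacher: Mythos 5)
Your proposal is correct and follows exactly the route the paper intends: the corollary is an immediate repackaging of Theorem \ref{thm:0and1} together with the type~0 and type~1 classifications of \S\ref{sec:type1} and \S\ref{sec:type0}, with the type~1 summand determined by $F = \im \boundary$ and the type~0 summand by $(\nu, K, \nabla^\nu, \nabla^K, [\omega])$. The paper offers no separate proof precisely because this bookkeeping is all that is required, so there is nothing to add.
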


\section{Example: \texorpdfstring{$TA$}{TA}}\label{sec:ta}

Let $A \to M$ be a Lie algebroid.  If $A$ is a regular Lie algebroid, i.e.\ if the anchor map $\rho_A: A \to TM$ is of constant rank, then the $\VB$--algebroid $TA$ in \eqref{ex:ta} is regular.  Then, by Corollary \ref{cor:classification}, there is an associated cohomology class $[\omega] \in H^2(A; \Hom(\nu,K))$, where $K$ and $\nu$ are the kernel and cokernel of $\rho_A$, respectively.  Since the construction of the $\VB$--algebroid $TA$ from $A$ is functorial, the class $[\omega]$ is a characteristic class of $A$.  In this section, we will give a geometric interpretation of $[\omega]$ in this case.

As was noted in Example \ref{ex:fatjet}, the fat algebroid $\hat{A}$ in this case is simply the first jet bundle $J^1 A$ of $A$.  There is a natural map $j: \Gamma(A) \to \Gamma(J^1 A)$, which however is not $C^\infty(M)$-linear; instead, it satisfies the property
\begin{equation*}
j(fX) = fj(X) + df \cdot X
\end{equation*}
for $f \in C^\infty(M)$ and $X \in \Gamma(A)$.  Here, $df \cdot X \in \Hom(TM,A)$ is viewed as a jet along the zero section of $A$.

If we choose a linear connection $\tilde{\nabla} : \vect(M) \times \Gamma(A) \to \Gamma(A)$, we may obtain a horizontal lift $X \in A \mapsto \hat{X} \in J^1 A$, where $\hat{X} \defequal j(X) - \tilde{\nabla} X$.  The resulting side and core connections are described as follows:
\begin{align}
\nabla^c_X Y &= [X,Y]_A + \tilde{\nabla}_{\rho_A(Y)} X, \label{eqn:nablacore}\\
\nabla^s_X \phi &= [\rho_A(X),\phi] + \rho_A \left( \tilde{\nabla}_{\phi} X \right),\label{eqn:nablaside}
\end{align}
for $X,Y \in \Gamma(A)$ and $\phi \in \Gamma(TM)$.

Additionally, one can derive the following expression for $\Omega \in \Omega^2(A)\otimes \Gamma(\Hom(TM,A))$:
\begin{equation}\label{eqn:omegata}
\Omega_{X,Y} \phi = [\tilde{\nabla}_\phi X, Y] + [X, \tilde{\nabla}_\phi Y] - \tilde{\nabla}_\phi [X, Y] - \tilde{\nabla}_{\nabla^s_X \phi} Y + \tilde{\nabla}_{\nabla^s_Y \phi} X.
\end{equation}

\subsection{The case \texorpdfstring{$\rho = 0$}{of trivial anchor}}
It is perhaps instructive to begin with the case where the anchor map $\rho_A$ is trivial (or in other words, where $A$ is simply a bundle of Lie algebras).  Since for the $\VB$--algebroid $TA$ in \eqref{ex:ta} we have $\boundary = -\rho_A$, the case $\rho_A = 0$ corresponds to the case where $TA$ is a $\VB$--algebroid of type $0$ (see \S\ref{sec:type0}).

The vanishing of the cohomology class $[\Omega]$ is equivalent to the existence of a connection $\tilde{\nabla}$ for which $\Omega$, described by \eqref{eqn:omegata}, vanishes.

Since $\nabla^s$ becomes trivial when $\rho_A = 0$, we immediately see that $\Omega \phi$ measures the failure of $\tilde{\nabla}_\phi$ to be a derivation of the Lie bracket.  Therefore, $\Omega = 0$ precisely when, for any $\phi \in \vect(M)$, parallel transport along $\phi$ induces Lie algebra isomorphisms of the fibres of $A$.  In fact, it can be shown that if $\Omega = 0$, one can use parallel transport to locally trivialize $A$ as a Lie algebra bundle.  Conversely, given a local trivialization of $A$, one can define parallel transport in a way that respects the Lie brackets on the fibres of $A$.  Thus we have the following result:
\begin{prop}
Let $A \to M$ be a Lie algebroid with $\rho_A = 0$, and consider the type $0$ $\VB$--algebroid $TA$ in \eqref{ex:ta}.  The cohomology class $[\Omega] \in H^2(A; \Hom(TM, A))$ vanishes if and only if the bundle of Lie algebras $A$ is locally trivializable as a Lie algebra bundle.
\end{prop}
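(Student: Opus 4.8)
The plan is to use the reduction already set up in the preceding discussion and to supply the two pieces of genuine differential geometry that it leaves implicit. Recall that, with $\rho_A = 0$, we have $\boundary = -\rho_A = 0$ and $\nabla^s = 0$ by \eqref{eqn:nablaside}, so \eqref{ex:ta} is of type $0$ and the class $[\Omega] \in H^2(A;\Hom(TM,A))$ vanishes exactly when some linear connection $\tilde{\nabla}$ makes the form \eqref{eqn:omegata} vanish identically. Under $\rho_A = 0$ that form is $\Omega_{X,Y}\phi = [\tilde{\nabla}_\phi X, Y]_A + [X, \tilde{\nabla}_\phi Y]_A - \tilde{\nabla}_\phi [X,Y]_A$. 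First I would record the tensorial reformulation: writing the bracket as a section $\mu \in \Gamma(\bigwedge^2 A^* \otimes A)$ and letting $\tilde{\nabla}$ also denote the induced connection on $\bigwedge^2 A^* \otimes A$, one has
\[
\Omega = -\tilde{\nabla}\mu,
\]
so $\Omega = 0$ if and only if $\mu$ is $\tilde{\nabla}$--parallel, i.e.\ every $\tilde{\nabla}_\phi$ is a fibrewise derivation of the bracket. The statement thus becomes: a \emph{global} derivation--connection on $A$ exists if and only if $A$ is locally trivial as a bundle of Lie algebras.

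For the direction ``$[\Omega]=0 \Rightarrow$ local triviality'', I would take a global $\tilde{\nabla}$ with $\tilde{\nabla}\mu = 0$ and pass to parallel transport. Since $\mu$ is parallel, the parallel transport $P_\gamma : A_{\gamma(0)} \to A_{\gamma(1)}$ along any path preserves $\mu$, hence is a Lie algebra isomorphism of fibres. Fixing $p$, a star--shaped chart $U \ni p$, and a basis $\{v_i\}$ of $A_p$, I would define a frame by radial transport $e_i(q) := P_{\gamma_q}(v_i)$, with $\gamma_q$ the segment from $p$ to $q$; smooth dependence of solutions of the transport ODE on the endpoint gives a smooth frame. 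Because $P_{\gamma_q}$ is a homomorphism for \emph{every} $q$, one gets $[e_i,e_j]_A(q) = P_{\gamma_q}([v_i,v_j]_{A_p}) = c_{ij}^k e_k(q)$ with $c_{ij}^k$ the structure constants of $A_p$, constant over all of $U$; hence $A|_U \cong U \times A_p$ as bundles of Lie algebras.

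For the converse, I would build a global derivation--connection by gluing. Local triviality provides a cover $\{U_\alpha\}$ with frames of constant structure constants; on each $U_\alpha$ the flat connection $\tilde{\nabla}^\alpha$ making such a frame parallel is a fibrewise derivation of the bracket, whence $\Omega(\tilde{\nabla}^\alpha) = 0$ there. The crucial point is that $\Omega(\tilde{\nabla})$ is $C^\infty(M)$--linear in $X$, $Y$, and $\phi$ (it is the tensor $-\tilde{\nabla}\mu$), so for any subordinate partition of unity $\{\rho_\alpha\}$ the convex combination $\tilde{\nabla} := \sum_\alpha \rho_\alpha\,\tilde{\nabla}^\alpha$ is again a connection (using $\sum_\alpha \rho_\alpha = 1$) and, by $C^\infty(M)$--bilinearity of the bracket, satisfies $\Omega(\tilde{\nabla}) = \sum_\alpha \rho_\alpha\,\Omega(\tilde{\nabla}^\alpha) = 0$ pointwise. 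This global $\tilde{\nabla}$ witnesses $[\Omega] = 0$.

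The hard part will be the forward direction, specifically the two facts that (i) a $\tilde{\nabla}$--parallel bracket tensor forces parallel transport to be a genuine Lie algebra homomorphism between fibres, and (ii) radial transport yields a \emph{smooth} frame whose structure constants are constant throughout $U$, not merely along radial rays. Both are standard (parallel tensors are transport--invariant; the radial/exponential gauge produces smooth frames), but they are where the geometric content of the proposition resides. Everything else---the identification $\Omega = -\tilde{\nabla}\mu$, the reduction via the type $0$ classification of \S\ref{sec:type0}, and the partition--of--unity gluing---is formal once the tensoriality of $\Omega$ is noted.
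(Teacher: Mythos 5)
Your proposal is correct and takes essentially the same route as the paper, which only sketches this argument ("it can be shown that if $\Omega = 0$, one can use parallel transport to locally trivialize $A$\dots"): both rest on the observation that $\Omega_{X,Y}\phi$ measures the failure of $\tilde{\nabla}_\phi$ to be a derivation of the fibrewise bracket, so that $\Omega=0$ makes parallel transport a Lie algebra isomorphism of fibres, and conversely a local trivialization yields a bracket-compatible connection. The details you supply beyond the paper's sketch --- the identification $\Omega=-\tilde{\nabla}\mu$, the radial-gauge frame, and the partition-of-unity globalization needed to pass from local trivializations back to a single global connection witnessing $[\Omega]=0$ --- are all sound.
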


\subsection{The general case}

Now we will consider the general case of a regular Lie algebroid $A \to M$.  Let $K \subseteq A$ be the kernel of $\rho_A$, and let $F \subseteq TM$ be the image of $\rho_A$.  The vanishing of the cohomology class $[\omega]$ is equivalent to the existence of a connection $\tilde{\nabla}$ and splittings $A \cong K \oplus F$ and $TM \cong \nu \oplus F$ such that $\omega$, defined in \eqref{eqn:defomega}, vanishes.

First, if we choose a splitting of the short exact sequence of vector bundles
  $K \to A \to F$,
then we obtain an $F$-connection $\nabla^K$ on $K$ and a $K$-valued $2$-form $B \in \Omega^2(F) \otimes \Gamma(K)$, defined by the properties
\begin{equation}  \label{eqn:brackfk}
[\phi, k]_A = \nabla^K_\phi k,  \quad \quad
[\phi, \phi']_A = B(\phi, \phi') + [\phi, \phi']_{TM} 
\end{equation}
for $\phi, \phi' \in \Gamma(F)$ and $k \in \Gamma(K)$.  Note that this $\nabla^K$ is not the same as the one in \eqref{eqn:blockmatrix}.  Choose an extension of $\nabla^K$ to a $TM$-connection $\tilde{\nabla}^K$ on $K$.  

Second, choose a splitting of the sequence $F \to TM \to \nu$.  This induces a $\nu$-connection $\nabla^F$ on $F$, where $\nabla^F_\psi \phi$ is the component of $[\psi, \phi]$ in $F$, for $\psi \in \Gamma(\nu)$ and $\phi \in \Gamma(F)$.  Note that this $\nabla^F$ is not the same as the one in \eqref{eqn:blockmatrix}.  Choose an extention of $\nabla^F$ to a $TM$-connection $\tilde{\nabla}^F$ on $F$.

Third, we may define a $TM$-connection $\tilde{\nabla}$ on $A$ as follows:
\begin{equation}\label{eqn:nablata}
\tilde{\nabla}_\psi X = \tilde{\nabla}^K_\psi X_K + \tilde{\nabla}^F_\psi X_F + B(\psi_F, X_F)
\end{equation}
for $\psi \in \vect(M)$ and $X \in \Gamma(A)$.  Here, $X_K$ and $X_F$ are the components of $X$ in $K$ and $F$, respectively, and $\psi_F$ is the component of $\psi$ in $F$.  

We have constructed the connection $\tilde{\nabla}$ in \eqref{eqn:nablata} such that it has the following properties: 
\begin{itemize}
  \item If $Y_K = 0$, then $\nabla^c_X Y$ is in $F$. We see this by substituting \eqref{eqn:nablata} into \eqref{eqn:nablacore}.
\item If $\psi \in \Gamma(\nu)$, then $\nabla^s_X \psi$ is also in $\Gamma(\nu)$. We see this by substituting \eqref{eqn:nablata} into \eqref{eqn:nablaside}.
\end{itemize}
In other words, if the core and side connections are expressed in block form as in \eqref{eqn:blockmatrix}, then they will both be block-diagonal, and as a consequence, $\Omega$ will also be block-diagonal.  Therefore, to compute $\omega$, we simply need to restrict $\Omega$ to $\Gamma(\nu)$, and the result lies in $K$.

Using \eqref{eqn:brackfk} and \eqref{eqn:nablata} in \eqref{eqn:omegata}, we obtain from a long but direct computation the following equation for $X,Y \in \Gamma(A)$ and $\psi \in \Gamma(\nu)$:
\begin{equation}\label{eqn:omegata2}
\begin{split}
\Omega_{X,Y}\psi =& [\tilde{\nabla}^K_\psi X_K, Y_K]_K + [X_K, \tilde{\nabla}^K_\psi Y_K]_K - \tilde{\nabla}^K_\psi [X_K, Y_K]_K \\
&+ \tilde{R}^K_{X_F, \psi} Y_K - \tilde{R}^K_{Y_F, \psi} X_K \\
&+ \tilde{\nabla}^K_\psi B(X_F, Y_F) - B(\tilde{\nabla}^F_\psi X_F, Y_F) - B(X_F, \tilde{\nabla}^F_\psi Y_F).
\end{split}
\end{equation}
Here, $\tilde{R}^K$ is the curvature of $\tilde{\nabla}^K$. 
\begin{prop}\label{prop:omegata}
$\omega_{X,Y} \psi$ vanishes for all $X,Y \in \Gamma(A)$ and $\psi \in \Gamma(\nu)$ if and only if the following statements are true:
\begin{enumerate}
\item $\tilde{\nabla}^K$ is a derivation of the bracket on $K$,
\item $\tilde{R}^K_{\phi, \psi}$ vanishes for all $\phi \in \Gamma(F)$ and $\psi \in \Gamma(\nu)$, and
\item $B(\tilde{\nabla}^F_\psi \phi, \phi') + B(\phi, \tilde{\nabla}^F_\psi \phi') - \tilde{\nabla}^K_\psi B(\phi, \phi')$ vanishes for all $\phi, \phi' \in \Gamma(F)$ and $\psi \in \Gamma(\nu)$.
\end{enumerate}
\end{prop}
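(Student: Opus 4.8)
The plan is to read the conditions directly off the block structure already present in \eqref{eqn:omegata2}. Since $\omega$ is an honest $\Hom(\nu,K)$-valued $A$-$2$-form, $\omega_{X,Y}\psi$ is $C^\infty(M)$-trilinear and alternating in $(X,Y)$; hence, using the fixed splitting $A \cong K \oplus F$, I would decompose each argument as $X = X_K + X_F$, $Y = Y_K + Y_F$ and test $\omega$ on pure components. The structural observation, visible by inspection of \eqref{eqn:omegata2}, is that its three lines depend on disjoint bidegrees: the first line is nonzero only when both $X$ and $Y$ lie in $\Gamma(K)$, the third line only when both lie in $\Gamma(F)$, and the middle line only in the mixed case (one argument in each). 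Because the splitting lets us prescribe the $K$- and $F$-components of $X$ and $Y$ independently, the three lines can be made to vanish separately, so $\omega \equiv 0$ if and only if each line vanishes for all admissible inputs.

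The backward implication is then immediate: assuming (1), (2), and (3), each of the three lines of \eqref{eqn:omegata2} vanishes identically, so $\omega = 0$. For the forward direction I would isolate the conditions one at a time. Choosing $X,Y \in \Gamma(K)$ annihilates the second and third lines and leaves
\[
[\tilde{\nabla}^K_\psi X, Y]_K + [X, \tilde{\nabla}^K_\psi Y]_K - \tilde{\nabla}^K_\psi [X,Y]_K,
\]
whose vanishing for all $\psi \in \Gamma(\nu)$ says that $\tilde{\nabla}^K_\psi$ is a derivation of $[\cdot,\cdot]_K$; together with the fact that $\nabla^K_\phi$ is automatically a derivation for $\phi \in \Gamma(F)$ (by the Jacobi identity and \eqref{eqn:brackfk}), this is exactly (1). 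Choosing $X \in \Gamma(F)$ and $Y \in \Gamma(K)$ leaves only $\tilde{R}^K_{X,\psi}Y$; since $Y$ ranges over all of $\Gamma(K)$ and curvature is tensorial, this forces $\tilde{R}^K_{\phi,\psi} = 0$ as an endomorphism of $K$ for all $\phi \in \Gamma(F)$, $\psi \in \Gamma(\nu)$, which is (2). Finally, choosing $X,Y \in \Gamma(F)$ leaves the third line, namely $\tilde{\nabla}^K_\psi B(X,Y) - B(\tilde{\nabla}^F_\psi X, Y) - B(X, \tilde{\nabla}^F_\psi Y)$, whose vanishing is precisely (3) up to an overall sign.

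These computations are routine once the block decomposition is fixed, so I do not expect a genuine obstacle; the one place deserving care is the independence argument that permits setting the three lines to zero separately. This rests entirely on the tensoriality of $\omega$ in $(X,Y)$ and on the freedom, provided by $A \cong K \oplus F$, to realize arbitrary pure components as global sections. A minor related subtlety is that condition (1) refers to all of $TM$ while \eqref{eqn:omegata2} only probes directions in $\nu$; this gap is closed by the automatic derivation property of $\nabla^K$ along $F$ noted above, so that testing $\psi \in \Gamma(\nu)$ suffices to recover the full statement.
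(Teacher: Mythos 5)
Your proof is correct and follows essentially the same route as the paper: the paper likewise obtains the result "by alternatively setting $X_F, Y_F = 0$, $X_K, Y_F = 0$, and $X_K, Y_K = 0$ in \eqref{eqn:omegata2}," exploiting the fact that the three lines of that formula involve disjoint bidegrees in the decomposition $A \cong K \oplus F$. Your additional remarks on the tensoriality of $\omega$ and on $\nabla^K_\phi$ being automatically a derivation for $\phi \in \Gamma(F)$ (via the Jacobi identity and \eqref{eqn:brackfk}) are correct and merely make explicit points the paper leaves implicit.
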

\begin{proof}
First, notice that the restrictions we have imposed in the choice of $TM$--connection $\tilde{\nabla}$ on $A$ in the above construction are equivalent to asking that the operators in \eqref{eqn:blockdiagonal} are block-diagonal.  As was mentioned in the uniqueness part of the proof of Theorem \ref{thm:0and1}, with this restriction to the choices we still get all the forms in the cohomology class $[\omega]$.  Hence,  $[\omega]=0$ if and only if there is a choice of complements and linear connection as the ones above for which $\omega=0$.

Second,  by alternatively setting $X_F, Y_F = 0$, $X_K, Y_F = 0$, and $X_K, Y_K = 0$ in \eqref{eqn:omegata2}, we obtain the required result.
\end{proof}

Given a leaf $L$ of the foliation $F$, the structure of the restricted Lie algebroid $A|_L$ is completely determined by the data $\left([\cdot,\cdot]_K, \nabla^K, B\right)$ over $L$.  Thus, we may interpret the three conditions in Proposition \ref{prop:omegata} as saying that, if $\psi$ respects the foliation, then parallel transport along $\psi$ gives isomorphisms of the restrictions of $A$ to the leaves.  In other words, $[\omega]$ is the obstruction to local trivializability of $A$, in the following sense:

\begin{thm}
The cohomology class $[\omega] \in H^2(A; \Hom(\nu, K))$ vanishes if and only if, around any leaf $L$, there locally exists a tubular neighborhood $\tilde{L}$ and an identification $\tilde{L} \equiv L \times U$ such that the Lie algebroid $A|_{\tilde{L}}$ is isomorphic to the cross product of $A|_L$ and the trivial Lie algebroid over $U$.
\end{thm}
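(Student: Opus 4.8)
The plan is to translate the cohomological condition into a statement about transverse parallel transport and then integrate it. The key observation is that each restriction $A|_L$ is a transitive Lie algebroid, whose isomorphism type over $L$ is completely encoded by the coupling data $([\cdot,\cdot]_K,\nabla^K,B)$ of \eqref{eqn:brackfk}--\eqref{eqn:brackff}: the fibrewise bracket on $K$, the leafwise adjoint connection $\nabla^K$, and the curvature form $B$. Since, as noted in the proof of Proposition \ref{prop:omegata}, every representative of $[\omega]$ arises from a choice of splittings and of a connection $\tilde{\nabla}$, the class $[\omega]$ vanishes precisely when these choices can be made so that conditions (1)--(3) of Proposition \ref{prop:omegata} hold. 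I would therefore prove the theorem by showing that conditions (1)--(3) are equivalent to local triviality around leaves, after first reading each condition geometrically: in the block-diagonal gauge, conditions (1), (2), and (3) say exactly that transverse parallel transport, using the $\nu$-component of $\tilde{\nabla}$, preserves respectively the fibrewise bracket on $K$, the leafwise connection $\nabla^K$, and the form $B$.

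For the direction $[\omega]=0 \Rightarrow$ local triviality, fix a leaf $L$ and choose splittings and $\tilde{\nabla}$ realizing conditions (1)--(3). Choose a tubular neighborhood identified with a product $L\times U$, where $U$ is a ball about $0$ in $\reals^q$ carrying the product foliation (leaves $L\times\{u\}$), so that $F=TL$ and the normal bundle $\nu$ is identified with $TU$. I would define radial parallel transport $P_u\colon A|_{L\times\{0\}}\to A|_{L\times\{u\}}$ along the rays $t\mapsto tu$ using the transverse connection; since the radial path between $0$ and $u$ is unique, $P_u$ is unambiguously defined and depends smoothly on $u$. The geometric reading of (1)--(3) then says exactly that each $P_u$ intertwines the coupling data of the two leaves, hence is an isomorphism of transitive Lie algebroids $A|_L \iso A|_{L\times\{u\}}$. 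Assembling the $P_u$ into a single bundle map $\Phi$ over $L\times U$, and recalling that in the cross product of $A|_L$ with the trivial, zero-anchor Lie algebroid over $U$ the anchor and bracket are purely leafwise, one checks that $\Phi$ is an isomorphism of Lie algebroids, which is the asserted local trivialization.

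For the converse, suppose $A|_{\tilde L}$ is isomorphic to such a cross product. Then in the product coordinates the coupling data $([\cdot,\cdot]_K,\nabla^K,B)$ is independent of the transverse variable $u$. Choosing the transverse connection to be the flat product connection, its parallel transport is the identity on the $u$-constant family, so $\tilde{\nabla}^K$ is a fibrewise derivation, the mixed curvature $\tilde{R}^K_{\phi,\psi}$ vanishes, and the compatibility of condition (3) holds because $B$ is $u$-independent and $\tilde{\nabla}^F$ is trivial. Substituting into \eqref{eqn:omegata2} gives $\omega=0$ in this gauge, so by Proposition \ref{prop:omegata}, together with the choice-independence of $[\omega]$ established in the proof of Theorem \ref{thm:0and1}, the class $[\omega]$ vanishes.

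The main obstacle is the forward construction: verifying that radial transverse transport is genuinely a morphism of Lie algebroids, not merely a vector-bundle isomorphism. Concretely, I expect the real work to lie in checking that (1)--(3) translate cleanly into ``transport preserves the coupling data''---that is, that transport is compatible with the anchor onto $TL$ and carries leafwise brackets to leafwise brackets---and in confirming that the resulting family $\Phi$ is smooth and respects the full bracket, including the Bianchi-type identity relating $\nabla^K B$ to the curvature of $\nabla^K$ that the coupling data of any transitive algebroid must satisfy. A secondary point requiring care is the passage between the global vanishing of $[\omega]$ and the purely local trivializations: the forward direction only uses the global data restricted near $L$, while the converse produces a vanishing gauge near each leaf, so one must invoke the choice-independence of $[\omega]$ to pass between the local and global statements.
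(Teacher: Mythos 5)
Your proposal follows essentially the same route as the paper, which in fact records no formal proof of this theorem beyond the discussion preceding it: Proposition \ref{prop:omegata} reduces $[\omega]=0$ to conditions (1)--(3), which are then read as saying that transverse parallel transport preserves the coupling data $([\cdot,\cdot]_K,\nabla^K,B)$ determining each restriction $A|_L$, and hence integrates to leaf-to-leaf isomorphisms. Your explicit radial-transport construction and your flagging of the local-to-global subtlety in the converse direction are more detailed than what the paper provides, but the underlying argument is the same.
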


Clearly, the vanishing of $[\omega]$ imposes a strong regularity condition on the Lie algebroid structure of $A$.  In general, we may view $[\omega]$ as a measure of how the Lie algebroid structure on $A|_L$ depends on the choice of $L$.

\appendix

\section{Proof of Theorem \ref{thm:lavbvbla}}
\label{app:sth}

Let us first concentrate on the aspects of the compatibility conditions that relate to the anchor map $\rho_D$.  For $\VB$--algebroids, the requirement is that $\rho_D$ be a bundle morphism as in (\ref{anchordiag}).  For $\LA$--vector bundles, we require that the diagram (\ref{anchordiag}), as well as the diagram
\begin{equation}\label{anchor}
\xymatrix{D^{(2)} \ar^{\rho_D^{(2)}}[r] \ar_{+_A}[d] & TE^{(2)} \ar^{T(+)}[d] \\ D \ar^{\rho_D}[r] & TE},
\end{equation}
commute.  Here, $D^{(2)} \defequal D \times_A D$ and $TE^{(2)} \defequal TE \times_{TM} TE$.  It is immediately clear that the $\VB$--algebroid and $\LA$--vector bundle compatibility conditions for $\rho_D$ are equivalent to each other.  In what follows, we will assume that they are satisfied.

We now turn to the aspects of the compatibility conditions that involve the brackets.  For $\VB$--algebroids, these are conditions (1)-(3) in Definition \ref{dfn:vbla}.  For $\LA$--vector bundles, we require that the Lie algebroid structure on $D \to E$ be $q$--projectible (as defined in Remark \ref{rem:proj}), and that the map $+_A : D \times_A D \to D$ be an algebroid morphism.  We note that Definition \ref{dfn:vbla} refers only to brackets of linear and core sections.  In order to prove the equivalence of the $\VB$--algebroid and $\LA$--vector bundle compatibility conditions, we will rewrite the latter in terms of linear and core sections.

First, let us consider the condition that the Lie algebroid structure on $D \to E$ is to be $q$-projectible.  
\begin{lemma}\label{lemma:qbracket}
The algebroid structure on $D \to E$ is $q$-projectible if and only if, for all $X,Y \in \Gamma_\ell(D,E)$ and $\alpha, \beta \in \Gamma_C(D,E)$, 
\begin{enumerate}
\item $[X,Y]_D$ is $q$-projectible,
\item $[X,\alpha]_D$ is $q$-projectible to $0^A$,
\item $[\alpha,\beta]_D$ is $q$-projectible to $0^A$.
\end{enumerate}
\end{lemma}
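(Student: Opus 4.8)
The plan is to prove the two directions separately, with the reverse implication carrying essentially all of the content. Throughout I will use the fact, visible from the coordinate description in Remark \ref{rmk:coords}, that the linear and core sections generate $\Gamma(D,E)$ as a $C^\infty(E)$-module, and that a section $X \in \Gamma(D,E)$ is $q$-projectible to $0^A$ precisely when it is a $C^\infty(E)$-linear combination of core sections. A second ingredient I will invoke repeatedly is that $q^D_A$ is linear over $q^E$ (Proposition \ref{prop:dvb}), so that for any $f \in C^\infty(E)$ and any section $V$ that is $q$-projectible to $0^A$, the section $fV$ is again $q$-projectible to $0^A$; by contrast, scaling a section that projects to a nonzero $X_0$ destroys projectibility unless $f$ is a pullback from $M$. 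This asymmetry is the mechanism that makes the bracket conditions reduce to linear and core sections, and I expect it to be the crux of the argument.

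For the forward direction, suppose the structure is $q$-projectible, so that $q^D_A$ is a morphism onto a Lie algebroid $(A,[\cdot,\cdot]_A,\rho_A)$. Since $q^E$ is a surjective submersion, the morphism property is equivalent to requiring that the $D$-bracket of any two $q$-projectible sections be $q$-projectible to the $A$-bracket of their projections. Linear sections are $q$-projectible to their base sections and core sections are $q$-projectible to $0^A$, so conditions (1)--(3) follow at once by evaluating this on the pairs (linear, linear), (linear, core), and (core, core), using $[X_0,0^A]_A=0$ and $[0^A,0^A]_A=0$.

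For the reverse direction I first define the bracket on $\Gamma(A)$. Every $X_0 \in \Gamma(A)$ admits a linear lift, since the projection $\hat{A} \to A$ of \eqref{shortexact-ahat} is surjective, and I set $[X_0,Y_0]_A$ to be the section of $A$ to which $[X_\ell,Y_\ell]_D$ projects for chosen linear lifts $X_\ell,Y_\ell$; condition (1) guarantees this projection exists. The main computation is to show that for \emph{arbitrary} $q$-projectible lifts $X,Y$ of $X_0,Y_0$ the bracket $[X,Y]_D$ still projects to the same section, which simultaneously yields projectibility of all brackets of projectible sections and well-definedness of $[\cdot,\cdot]_A$. Writing $X = X_\ell + V$ and $Y = Y_\ell + W$ with $V,W$ projecting to $0^A$, I expand $[X,Y]_D$ into the four terms $[X_\ell,Y_\ell]_D$, $[X_\ell,W]_D$, $[V,Y_\ell]_D$, and $[V,W]_D$ and treat each with the Leibniz rule: the last three reduce, via conditions (2) and (3) together with the scaling remark above, to $C^\infty(E)$-combinations of core sections plus $C^\infty(E)$-multiples of sections projecting to $0^A$, hence all project to $0^A$, so that only $[X_\ell,Y_\ell]_D$ contributes to the projection.

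It then remains to check that $(\rho_A,[\cdot,\cdot]_A)$ is a genuine Lie algebroid and that $q^D_A$ is a morphism. Antisymmetry and the Jacobi identity descend directly from those of $[\cdot,\cdot]_D$ by lifting to linear sections. For the Leibniz rule I lift $fY_0$ by $(f\circ q^E)Y_\ell$, which remains a linear lift because $f \circ q^E$ is fibrewise constant, and compute $[X_\ell,(f\circ q^E)Y_\ell]_D$; the anchor relation of diagram \eqref{anchordiag} identifies $\rho_D(X_\ell)(f\circ q^E)$ with $(\rho_A(X_0)f)\circ q^E$, so projecting gives exactly $f[X_0,Y_0]_A + (\rho_A(X_0)f)Y_0$. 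With the anchor already compatible by assumption, $q^D_A$ is then a Lie algebroid morphism, and therefore the structure on $D\to E$ is $q$-projectible.
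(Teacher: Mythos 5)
Your proof is correct and follows essentially the same route as the paper's: both arguments rest on the observation that the $q$-projectible sections of $D \to E$ are exactly $\Gamma_\ell(D,E) + C^\infty(E)\otimes\Gamma_C(D,E)$, so that the Leibniz rule reduces the bracket-closure conditions to the linear and core generators. You are somewhat more explicit than the paper in the reverse direction (constructing $[\cdot,\cdot]_A$, checking well-definedness against arbitrary projectible lifts, and verifying the Leibniz rule on $A$ via the anchor diagram), but the underlying mechanism is identical.
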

\begin{proof}

As in Remark \ref{rmk:coords}, let us pick a decomposition $D \iso A \oplus E \oplus C$ and choose local coordinates  $\{x^i, a^i, e^i, c^i\}$ on $D$, where $\{x^i\}$ are coordinates on $M$, and $\{a^i\}$, $\{e^i\}$, and $\{c^i\}$ are fibre coordinates on $A$, $E$, and $C$, respectively.  Let $\{A_i, C_i\}$ be the frame of sections dual to the fibre coordinates $\{ a^i, c^i\}$.  In Remark \ref{rmk:coords} we described the form of linear and core sections in these coordinates. We now notice that  a section $X \in \Gamma(D,E)$ is $q$--projectible to a section $X_0 = f^i(x)A_i \in \Gamma(A,M)$  
 if and only if it is of the form
\begin{equation*}
X = f^i(x)A_i + g^i(x,e)C_i
\end{equation*}
This coordinate description shows that the space of $q$--projectible sections of $D \to E$ is exactly
\begin{equation} \label{eqn:qproj}
\Gamma_l (D,E) \; + \; \mathcal{C}^{\infty}(E) \otimes \Gamma_c (D,E)
\end{equation}
In terms of the brackets, $q$-projectibility is equivalent to the following two properties:
\begin{itemize}
 \item If $X$ and $Y$ in $\Gamma(D,E)$ are $q$-projectible, then $[X,Y]_D$ is $q$-projectible.
\item If $\alpha \in \Gamma(D,E)$ is $q$-projectible to $0^A$ and $X \in \Gamma(D,E)$ is $q$-projectible, then $[X, \alpha]_D$ is $q$-projectible to $0^A$.
\end{itemize}
From \eqref{eqn:qproj} we can see that these two properties are satisfied if and only if they are satisfied for linear and core sections.  Conditions (1)--(3) in the statement of this lemma are exactly these two properties restricted to linear and core sections.
\end{proof}

Second, we want to transform the condition that $+_A$ be a Lie algebroid morphism into a condition involving only linear and core sections.  In order to do so, we need some definitions.

Let $\tilde{X}$ be a section of $D^{(2)}$ over $E^{(2)} \defequal E \times_M E$.  We say that $\tilde{X}$ is \emph{$+$-projectible} to  $X \in \Gamma(D,E)$ if $+_A \circ \tilde{X} = X \circ +$, i.e.\ if it is a ``q--projectible'' section of the DVB
\begin{equation*}
\xymatrix{ D^{(2)} \ar[r]  \ar[d]_{+_A} & E^{(2)} \ar[d]_{+} \\
D \ar[r] & E }.
\end{equation*}

  If two sections $X, X' \in \Gamma(D,E)$ are both $q$-projectible to the same $X_0 \in \Gamma(A)$, then we may form the product $X \times X' \in \Gamma(D^{(2)}, E^{(2)})$.  
In particular, any $q$-projectible section $X \in \Gamma(D,E)$ induces the lift $X^{(2)} \defequal X \times X$.  
In addition, given any section $\alpha \in \Gamma(D,E)$ that is $q$--projectible to $0^A$, we can define sections 
\begin{equation*}
 \alpha^+ \defequal \frac 12 ( \alpha \times \tilde{0}^E + \tilde{0}^E \times \alpha) \quad \quad \textrm{and} \quad \quad
 \alpha^- \defequal \frac 12  ( \alpha \times \tilde{0}^E - \tilde{0}^E \times \alpha)
\end{equation*}
Let us introduce the following notation, just for the next lemma:
\begin{itemize}
\item $\Gamma_l^{(2)}$ denotes the set of lifts of sections $X \in \Gamma_l(D,E)$ to sections $X^{(2)} \in \Gamma (D^{(2)},E^{(2)})$,
\item $\Gamma_c^+$ denotes the set of lifts of sections $\alpha \in \Gamma_c(D,E)$ to sections $\alpha^+ \in \Gamma (D^{(2)},E^{(2)})$,
\item $\Gamma_c^-$ denotes the set of lifts of sections $\alpha \in \Gamma_c(D,E)$ to sections $\alpha^- \in \Gamma (D^{(2)},E^{(2)})$,
\item $\mathcal{C}^{\infty}(E)^{(2)}$ denotes the pullback of $\mathcal{C}^{\infty}(E)$ to functions on $E^{(2)}$ via $+: E^{(2)} \to E$.
\end{itemize}

Now we are ready for:
\begin{lemma}\label{lemma:plusbracket}
Suppose that the Lie algebroid structure on $D \to E$ is $q$-projectible, so there is an induced Lie algebroid structure on $D^{(2)} \to E^{(2)}$.  The addition map $+_A$ is a Lie algebroid morphism if and only if, for all $X,Y \in \Gamma_\ell(D,B)$ and $\alpha, \beta \in \Gamma_C(D,B)$, 
\begin{enumerate}
\item $([X,Y]_D)^{(2)}$ is $+$-projectible to $[X,Y]_D$,
\item $([X,\alpha]_D)^+$ is $+$-projectible to $[X,\alpha]_D$,
\item $[\alpha,\beta]_D$ = 0.
\end{enumerate}
\end{lemma}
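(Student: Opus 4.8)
The plan is to reduce the statement ``$+_A$ is a Lie algebroid morphism'' to the bracket conditions (1)--(3) by exhibiting a convenient spanning set of $\Gamma(D^{(2)},E^{(2)})$ consisting of $+$-projectible sections, and then computing the relevant brackets. I would invoke the standard criterion: a vector bundle morphism over a surjective submersion (here $+_A$ over $+ : E^{(2)} \to E$) whose anchors are already compatible is a Lie algebroid morphism if and only if, for every pair $\tilde{U}, \tilde{V}$ in a generating set of sections, with $\tilde{U}$ and $\tilde{V}$ being $+$-projectible to $U$ and $V$, the bracket $[\tilde{U},\tilde{V}]$ is $+$-projectible to $[U,V]_D$. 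The reduction to a generating set is justified by the Leibniz rule together with the already-established anchor compatibility, which controls exactly how $+$-projectibility interacts with multiplication by functions in $\mathcal{C}^{\infty}(E)^{(2)}$.

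First I would verify, in the coordinates of Remark \ref{rmk:coords}, that $\Gamma(D^{(2)},E^{(2)})$ is generated over $\mathcal{C}^{\infty}(E^{(2)})$ by the three families $\Gamma_l^{(2)}$, $\Gamma_c^+$, and $\Gamma_c^-$. In the same coordinates one checks the projections directly: $X^{(2)}$ is $+$-projectible to $X$, $\alpha^+$ is $+$-projectible to $\alpha$, and $\alpha^-$ is $+$-projectible to the zero section. Thus these families are precisely the $+$-projectible generators needed to apply the criterion, with $\Gamma_c^-$ accounting for the ``kernel direction'' of $+_A$.

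Next I would compute the brackets of generators using the product rule $[U \times U', V \times V']_{D^{(2)}} = [U,V]_D \times [U',V']_D$ for the fiber-product algebroid $D^{(2)} \to E^{(2)}$, together with the fact that $\tilde{0}^E$ is the zero element of $\Gamma(D,E)$, so that $[\cdot,\tilde{0}^E]_D = 0$. This gives $[X^{(2)},Y^{(2)}] = ([X,Y]_D)^{(2)}$, $[X^{(2)},\alpha^+] = ([X,\alpha]_D)^+$, and $[\alpha^+,\beta^+] = \tfrac{1}{2}([\alpha,\beta]_D)^+$. The first two, upon demanding $+$-projectibility to $[X,Y]_D$ and to $[X,\alpha]_D$, are exactly conditions (1) and (2). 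For the third, since $(\gamma)^+$ is $+$-projectible to $\gamma$, the factor $\tfrac{1}{2}$ forces the requirement $\tfrac{1}{2}[\alpha,\beta]_D = [\alpha,\beta]_D$, that is $[\alpha,\beta]_D = 0$, which is condition (3). This mismatched factor of $\tfrac{1}{2}$ is the crux of the argument: it is what makes the morphism condition on core sections collapse to outright vanishing rather than mere projectibility.

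Finally I would dispatch the brackets involving $\Gamma_c^-$, which a priori could impose further constraints. The same product rule yields $[X^{(2)},\alpha^-] = ([X,\alpha]_D)^-$, $[\alpha^+,\beta^-] = \tfrac{1}{2}([\alpha,\beta]_D)^-$, and $[\alpha^-,\beta^-] = \tfrac{1}{2}([\alpha,\beta]_D)^+$, each of which must be $+$-projectible to the zero section (since $\alpha^-$ and $\beta^-$ project to zero). A short coordinate check shows that $(\gamma)^-$ is $+$-projectible to zero exactly when $\gamma$ is a core section; hence these conditions hold automatically once $[X,\alpha]_D$ is core (condition (2)) and $[\alpha,\beta]_D = 0$ (condition (3)), so no new constraints appear. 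The main obstacle I anticipate is the careful bookkeeping in this last step, together with the function-scaling reduction: one must check that multiplication by arbitrary elements of $\mathcal{C}^{\infty}(E^{(2)})$ and the $\Gamma_c^-$ brackets genuinely produce nothing beyond (1)--(3). This is exactly the point at which the anchor compatibility and the $q$-projectibility established in Lemma \ref{lemma:qbracket} are silently used.
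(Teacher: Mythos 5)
Your proposal is correct and follows essentially the same route as the paper: describe the $+$-projectible sections of $D^{(2)} \to E^{(2)}$ in coordinates, reduce property of being a morphism to the generators $\Gamma_l^{(2)}$, $\Gamma_c^+$, $\Gamma_c^-$ via the Leibniz rule and anchor compatibility, and evaluate the six bracket cases using $[X \times X', Y \times Y']_{D^{(2)}} = [X,Y]_D \times [X',Y']_D$. Your explicit computation of the six brackets --- in particular the factor $\tfrac{1}{2}$ in $[\alpha^+,\beta^+]=\tfrac{1}{2}([\alpha,\beta]_D)^+$ forcing $[\alpha,\beta]_D=0$, and the verification that the $\Gamma_c^-$ cases impose nothing new --- is a correct and welcome expansion of the step the paper compresses into ``we systematically apply this fact.''
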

\begin{proof}
The condition that $+_A$ is a Lie algebroid morphism is equivalent to the statement that ``If $\tilde{X}, \tilde{Y} \in \Gamma(D^{(2)},E^{(2)})$ are $+$--projectible to $X,Y \in \Gamma(D,E)$, respectively, then $[\tilde{X}, \tilde{Y} ]_{D^{(2)}}$ is $+$-projectible to $[X,Y]_D$. ''  Call this property $P$.

Let us choose the same local coordinates as in the proof of Lemma \ref{lemma:qbracket}.  The induced coordinates on $D^{(2)}$ are $\{x^i, e^i_1, e^i_2, a^i, c^i_1, c^i_2 \}$. We also introduce coordinates $e^i_{\pm} \defequal \frac{1}{2}(e^i_1 \pm e^i_2)$ and $c^i_{\pm} \defequal \frac{1}{2}(c^i_1 \pm c^i_2)$.  Then $\{ A_i^{(2)}, C_i^+, C_i^- \}$ is the frame of sections of $D^{(2)}$ over ${E^{(2)}}$ dual to the fibre coordinates $\{a^i, c^i_+, c^i_- \}$.  We notice that a section $\tilde{X} \in \Gamma(D^{(2)}, E^{(2)})$ is $+$--projectible to
\begin{equation*} 
X = f^i(x,e)A_i + g^i(x,e)C_i \in \Gamma(D,E)
\end{equation*}
if and only if it is of the form
\begin{equation*}
\tilde{X} = f^i(x,2e^+)A^{(2)}_i + g^i(x,2e^+) C_i^+  + h^i(x,e^+,e^-) C_i^-.
\end{equation*}
Next we notice that generic sections $X^{(2)} \in \Gamma_l^{(2)}$, $\alpha^+ \in \Gamma_c^+$, and $\alpha^- \in \Gamma_c^-$ have, respectively, the form:
\begin{equation*}
\begin{split}
 X^{(2)} & = f^i(x)A^{(2)} + g^i_j(x)e^j_+ C_i^+, \\
 \alpha^+ & = h^i(x) C_i^+,\\
 \alpha^-  & = h^i(x) C_i^-.
\end{split}
\end{equation*}
These coordinate descriptions show that the space of sections of $D^{(2)} \to E^{(2)}$ that are $+$--projectible is exactly
\begin{equation*}
\mathcal{C}^{\infty}(E)^{(2)} \otimes \Gamma_l^{(2)} \; + \; 
\mathcal{C}^{\infty}(E)^{(2)} \otimes \Gamma_c^+ \; + \; 
\mathcal{C}^{\infty}(E^{(2)}) \otimes \Gamma_c^-.
\end{equation*}
From here, a direct computation shows that property P is true in general if and only if it is true for $\tilde{X}$ and $\tilde{Y}$ in $\Gamma_l^{2} \cup \Gamma_c^+ \cup \Gamma_c^-$.  (This requires using the compatibility conditions for the anchor.)  We are left with six particular cases of the statement of property $P$.

Finally, the bracket on $D^{(2)}$ satisfies the property that, if $(X, X')$ and and $(Y, Y')$ are compatible pairs of sections of $D \to E$, then $[X \times X', Y \times Y']_{D^{(2)}} = [X,Y]_D \times [X',Y']_D$.  We systematically apply this fact to property $P$ in the six particular cases we have, and we obtain conditions (1)--(3) in the statement of this lemma, hence completing its proof.
\end{proof}

The compatibility conditions involving the brackets in the definition of $\LA$--vector bundle have been rewritten in terms of linear and core sections as conditions (1)-(3) in Lemmas \ref{lemma:qbracket} and \ref{lemma:plusbracket}.  The compatibility conditions involving the brackets in the definition of $\VB$--algebroid were conditions (1)-(3) in Definition \ref{dfn:vbla}.  Lemma \ref{lemma:lincore} below shows that the two sets of conditions are equivalent, by means of a characterization of linear and core sections in terms of $q$- and $+$-projectibility, hence completing the proof of Theorem \ref{thm:lavbvbla}.

\begin{lemma} \label{lemma:lincore} \ 
\begin{enumerate}
\item A section $X \in \Gamma(D,E)$ that is $q$-projectible to $X_0 \in \Gamma(A)$ is linear if and only if $X^{(2)}$ is $+$-projectible to $X$.
\item A section $\alpha \in \Gamma(D,E)$ that is $q$-projectible to $0^A$ is a core section if and only if $\alpha^+$ is $+$-projectible to $\alpha$.
\end{enumerate}
\end{lemma}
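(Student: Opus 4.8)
The plan is to prove both equivalences by a direct computation in a decomposition, working in exactly the coordinate framework already set up for Lemmas \ref{lemma:qbracket} and \ref{lemma:plusbracket}. Fix a decomposition $D \iso A \oplus E \oplus C$ with the frame $\{A_i, C_i\}$ of sections of $D$ over $E$ dual to the fibre coordinates $\{a^i, c^i\}$, as in Remark \ref{rmk:coords}. The whole content of the lemma is that the defining relation for $X^{(2)}$ (respectively $\alpha^+$) being $+$-projectible translates into a functional equation on the core-valued component of the section, and that the smooth solutions of these equations are precisely the linear (respectively core) sections singled out in Remark \ref{rmk:coords}. I would therefore compute $+_A \circ X^{(2)}$ and $+_A \circ \alpha^+$ directly from the decomposed addition $+_A\bigl((a,e_1,c_1),(a,e_2,c_2)\bigr) = (a, e_1+e_2, c_1+c_2)$ rather than quoting an auxiliary criterion.

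For part (1), write a $q$-projectible section as $X = f^i(x) A_i + g^i(x,e) C_i$, with base section $X_0 \in \Gamma(A)$ given by the components $f^i$; by Remark \ref{rmk:coords}, $X$ is linear precisely when $g^i(x,e) = g^i_j(x) e^j$ is fibrewise-linear in $e$. Since $X$ is $q$-projectible, $X(e_1)$ and $X(e_2)$ lie over the common point $X_0(x)\in A$ when $e_1,e_2$ sit over the same base point, so $X^{(2)} = X\times X$ is defined. Because addition over $A$ adds the $E$- and $C$-components, I get $(+_A \circ X^{(2)})(e_1,e_2) = f^i(x) A_i + \bigl(g^i(x,e_1) + g^i(x,e_2)\bigr) C_i$ over the base point $e_1+e_2$, whereas $(X \circ +)(e_1,e_2) = f^i(x) A_i + g^i(x,e_1+e_2) C_i$. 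Hence $X^{(2)}$ is $+$-projectible to $X$ exactly when $g^i(x,e_1) + g^i(x,e_2) = g^i(x,e_1+e_2)$ for all $e_1,e_2$; equivalently, $+$-projectibility of $X^{(2)}$ says that $X\colon E \to D$ is fibrewise additive for the $D \to A$ structure. A smooth fibrewise-additive map of vector bundles is fibrewise-linear, so this is equivalent to $X$ being linear, as desired.

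For part (2), write a section $q$-projectible to $0^A$ as $\alpha = g^i(x,e) C_i$; by Remark \ref{rmk:coords} this is a core section if and only if $g^i$ is independent of $e$. Using $\alpha^+ = \tfrac12(\alpha \times \tilde{0}^E + \tilde{0}^E \times \alpha)$ and the fact that addition on $D^{(2)} \to E^{(2)}$ is componentwise, $\alpha^+$ has core-components $\tfrac12 g^i(x,e_1)$ and $\tfrac12 g^i(x,e_2)$ in its two factors, so applying $+_A$ (which adds these) gives $(+_A \circ \alpha^+)(e_1,e_2) = \tfrac12\bigl(g^i(x,e_1)+g^i(x,e_2)\bigr) C_i$, which must equal $(\alpha \circ +)(e_1,e_2) = g^i(x,e_1+e_2) C_i$. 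Thus $+$-projectibility of $\alpha^+$ is the equation $\tfrac12\bigl(g^i(x,e_1) + g^i(x,e_2)\bigr) = g^i(x, e_1+e_2)$; setting $e_2 = 0$ forces $g^i(x,e) = g^i(x,0)$, i.e.\ $g^i$ is independent of $e$, which is exactly the condition that $\alpha$ be a core section. The extra factor of $\tfrac12$ relative to part (1) is precisely the $\tfrac12$ built into the definition of $\alpha^+$.

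The individual computations are routine; the one place that demands care -- and the main source of potential normalization errors -- is keeping the two additions $+_A$ and $+_E$ (together with the induced addition on $D^{(2)} \to E^{(2)}$) straight, since it is the interchange between them that produces the functional equations above. Once the correct relation is extracted, the only conceptual input is the elementary fact that a smooth additive bundle map is linear, which settles part (1), while the rigidity in part (2) comes for free by evaluating at $e_2=0$. Together with Lemmas \ref{lemma:qbracket} and \ref{lemma:plusbracket}, this identifies the two sets of bracket conditions and completes the proof of Theorem \ref{thm:lavbvbla}.
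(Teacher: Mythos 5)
Your proof is correct and is exactly the coordinate computation that the paper's one-line proof (``a computation in coordinates, or a direct check of the definitions'') alludes to: the decomposed formulas for $+_A$, $X^{(2)}$, and $\alpha^+$ all check out, including the factor of $\tfrac12$ and the evaluation at $e_2=0$ in part (2). The only auxiliary input, that a smooth fibrewise-additive bundle map is fibrewise-linear, is standard and you invoke it correctly.
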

\begin{proof}
The proof is a computation in coordinates, or a direct check of the definitions.
\end{proof}


\section{Proofs from \S\ref{sec:cc}}
\label{appendix:charclasses}

We begin with the following lemmas, which consist of straightforward extensions of well-known results in standard Chern-Weil theory.  For all the lemmas, we suppose that $B \to M$ is a Lie algebroid and $\Etot \to M$ is a graded vector bundle.

\begin{lemma}\label{lemma:strbracket}
For any (nonhomogeneous) $B$-superconnection $\mathcal{O}$ on $\Etot$ and any $\End(\Etot)$-valued $B$-form $\theta$, the operator $[\mathcal{O}, \theta]$ on $\Omega(B) \otimes \Gamma(\Etot)$ is $\Omega(B)$-linear (and therefore may be viewed as an $\End(\Etot)$-valued $B$-form), and
\begin{equation*}
 \str([\mathcal{O}, \theta]) = d_B \str(\theta).
\end{equation*}
\end{lemma}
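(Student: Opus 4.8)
The plan is to prove the two assertions separately: first that $[\mathcal{O},\theta]$ is $\Omega(B)$-linear, and then the Chern--Weil identity for its supertrace. Throughout, $[\cdot,\cdot]$ denotes the graded commutator, so that $[\mathcal{O},\theta] = \mathcal{O}\theta - (-1)^{|\theta|}\theta\mathcal{O}$ since $\mathcal{O}$ is odd; by bilinearity it suffices to treat $\theta$ homogeneous. I would establish the $\Omega(B)$-linearity by a direct sign computation. The point is that $\theta$, being an $\End(\Etot)$-valued form, is already $\Omega(B)$-linear, while $\mathcal{O}$ fails to be $\Omega(B)$-linear only through the Leibniz term $(d_B\omega)(\cdot)$ of Definition \ref{dfn:asuper}. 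Applying both $\mathcal{O}\theta$ and $\theta\mathcal{O}$ to $\omega\xi$, for $\omega\in\Omega^p(B)$ and $\xi\in\Omega(B)\otimes\Gamma(\Etot)$, the two resulting $(d_B\omega)\theta(\xi)$ terms appear with opposite signs in the graded commutator and cancel, leaving
\[
[\mathcal{O},\theta](\omega\xi) = (-1)^{p(|\theta|+1)}\,\omega\,[\mathcal{O},\theta](\xi),
\]
which is exactly $\Omega(B)$-linearity for an operator of degree $|\theta|+1$. Hence $[\mathcal{O},\theta]$ may be regarded as an $\End(\Etot)$-valued $B$-form.

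For the supertrace identity I would argue locally, since both sides are globally defined $B$-forms and equality of forms is a local condition. Over a trivializing open set $U$ for $\Etot$, the exterior derivative $d_B$ (tensored with the identity on the fibre) is itself a $B$-superconnection obeying the same Leibniz rule, so $A \defequal \mathcal{O} - d_B$ satisfies $(\mathcal{O}-d_B)(\omega s)=(-1)^p\omega\,(\mathcal{O}-d_B)(s)$ and is therefore an $\End(\Etot)$-valued $B$-form. Writing $[\mathcal{O},\theta] = [d_B,\theta] + [A,\theta]$, the second summand is a graded commutator of two $\End(\Etot)$-valued forms, so by the graded cyclicity of the supertrace (the identity $\str(PQ)=(-1)^{|P||Q|}\str(QP)$ for total degrees) we get $\str([A,\theta])=0$.

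It then remains to check $\str([d_B,\theta]) = d_B\,\str(\theta)$, which is the one genuinely computational step. In the trivialization, $[d_B,\theta]$ is the covariant exterior derivative of $\theta$ with respect to the trivial connection, acting on the matrix-valued coefficients of $\theta$ coefficient-wise by $d_B$; since $\str$ is a fixed linear functional on the fibre with constant coefficients, it commutes with this coefficient-wise $d_B$, giving $\str([d_B,\theta]) = d_B\,\str(\theta)$ on $U$. Combining the two contributions yields the claimed formula locally, hence globally. The main obstacle is purely the sign bookkeeping in the graded setting: keeping the parities straight both in the cancellation of the Leibniz terms and in the cyclicity of the supertrace, together with the observation that the possibly nonhomogeneous character of $\mathcal{O}$ is harmless, since it enters only through the $\Omega(B)$-linear difference $A$, to which linearity and cyclicity extend by additivity.
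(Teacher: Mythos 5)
Your proof is correct and follows essentially the same route as the paper's: choose a local frame, write $\mathcal{O} = d_B + \eta$ with $\eta$ an $\End(\Etot)$-valued $B$-form, and observe that $\str([\eta,\theta]) = 0$ by graded cyclicity while $[d_B,\theta]$ contributes $d_B\str(\theta)$. You have simply spelled out the sign bookkeeping and the linearity check that the paper leaves implicit.
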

\begin{proof}
 Locally, choose a homogeneous frame $\{a_i\}$ for $\Etot$, and express $\mathcal{O}$ as $d_B + \eta$, where $\eta$ is an $\End(\Etot)$-valued $B$-form.  The result follows from the fact that $\str([\eta, \theta]) = 0$.
\end{proof}

\begin{lemma} \label{lemma:chern}
Suppose that $\Etot$ is equipped with a metric $g: \mathcal{E} \iso \mathcal{E}^*$, as in \S\ref{sec:cc}.  For any (nonhomogeneous) $B$-superconnection $\mathcal{O}$ on $\Etot$,
  \begin{enumerate}
    \item $d_B \str(\mathcal{O}^{2k}) = 0$  for all $k$.
\item $\str(\mathcal{O}^{2k}) = (-1)^k \str((\presup{g}\mathcal{O})^{2k})$ for all $k$.
  \end{enumerate}
\end{lemma}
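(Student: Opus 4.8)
The two parts are logically independent, and both reduce to manipulations of the supertrace against the sign conventions already fixed in \S\ref{sec:cc}, so I would treat them separately. For part (1) the plan is to apply Lemma \ref{lemma:strbracket} directly. First I would record that, for any $B$-superconnection $\mathcal{O}$, the Leibniz rule \eqref{eqn:superleibniz} together with $d_B^2=0$ forces $\mathcal{O}^2$ to be $\Omega(B)$-linear; hence every even power $\mathcal{O}^{2k}=(\mathcal{O}^2)^k$ is an $\End(\Etot)$-valued $B$-form and is a legitimate input for Lemma \ref{lemma:strbracket}. Since $\mathcal{O}$ is odd while $\mathcal{O}^{2k}$ is even, the graded commutator collapses: $[\mathcal{O},\mathcal{O}^{2k}]=\mathcal{O}^{2k+1}-\mathcal{O}^{2k+1}=0$. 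Lemma \ref{lemma:strbracket} then gives $d_B\str(\mathcal{O}^{2k})=\str([\mathcal{O},\mathcal{O}^{2k}])=0$, which is exactly the first claim.

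For part (2) the plan is to factor the comparison through the adjoint superconnection $\mathcal{O}^\dagger$ on $\Etot^*$. By definition $\presup{g}\mathcal{O}=g^{-1}\circ\mathcal{O}^\dagger\circ g$, so $(\presup{g}\mathcal{O})^{2k}=g^{-1}(\mathcal{O}^\dagger)^{2k}g$; because $g$ is a parity-preserving bundle isomorphism, conjugation by $g$ leaves the supertrace unchanged, giving $\str((\presup{g}\mathcal{O})^{2k})=\str((\mathcal{O}^\dagger)^{2k})$. The heart of the argument is then to relate $(\mathcal{O}^\dagger)^2$ to $\mathcal{O}^2$. Applying $d_B$ to the defining relation \eqref{eqn:adjoint} once more and using $d_B^2=0$, the two cross terms cancel by $(-1)^{|a|+1}+(-1)^{|a|}=0$, leaving $\langle\mathcal{O}^2 a,\varsigma\rangle=-\langle a,(\mathcal{O}^\dagger)^2\varsigma\rangle$; in other words $(\mathcal{O}^\dagger)^2$ is minus the dual endomorphism of the curvature $\mathcal{O}^2$. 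Raising to the $k$-th power, where the even parity of $\mathcal{O}^2$ trivializes the dual-of-a-product signs, yields $(\mathcal{O}^\dagger)^{2k}=(-1)^k(\mathcal{O}^{2k})^{*}$, with $*$ the dual endomorphism on $\Etot^*$. Finally, since passing to the dual preserves the supertrace of an even operator, $\str((\mathcal{O}^{2k})^{*})=\str(\mathcal{O}^{2k})$, and chaining the three equalities gives $\str((\presup{g}\mathcal{O})^{2k})=(-1)^k\str(\mathcal{O}^{2k})$, which rearranges to the stated identity.

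The only genuine obstacle is sign bookkeeping, concentrated in two places. The first is the single factor of $-1$ in $\langle\mathcal{O}^2 a,\varsigma\rangle=-\langle a,(\mathcal{O}^\dagger)^2\varsigma\rangle$: this minus sign is precisely the source of the $(-1)^k$, so the cancellation of the cross terms must be checked carefully against the parity conventions in \eqref{eqn:pairing} and \eqref{eqn:adjoint}, and against the nonhomogeneity of $\mathcal{O}^\dagger$. The second is the assertion that both conjugation by $g$ and passage to the dual endomorphism preserve the supertrace; here one must use that $g$ and $\mathcal{O}^{2k}$ are parity-even and that the dual of the degree-$i$ summand of $\Etot$ lives in degree $-i$ of $\Etot^*$ with the \emph{same} parity, so that the grading operator defining $\str$ is respected. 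Once these conventions are pinned down, both parts are short.
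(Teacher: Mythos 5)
Your proposal is correct and follows essentially the same route as the paper: part (1) is the direct application of Lemma \ref{lemma:strbracket} to the $\End(\Etot)$-valued form $\mathcal{O}^{2k}$, and part (2) rests on the identity $\langle \mathcal{O}^{2}a,\varsigma\rangle = -\langle a,(\mathcal{O}^\dagger)^{2}\varsigma\rangle$ iterated $k$ times, which is exactly the paper's displayed computation $\langle \mathcal{O}^{2k}a,\varsigma\rangle = (-1)^k\langle a,(\mathcal{O}^\dagger)^{2k}\varsigma\rangle$. Your extra care in deriving that identity from $d_B^2=0$ applied to \eqref{eqn:adjoint}, and in checking that conjugation by the parity-preserving $g$ and passage to the dual both preserve the supertrace, only makes explicit what the paper leaves implicit.
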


\begin{proof}
 For the first statement, we note that $\mathcal{O}^{2k}$ is an $\End(\Etot)$-valued $B$-form, so by Lemma \ref{lemma:strbracket} we have that $d_B \str(\mathcal{O}^{2k}) = \str([\mathcal{O}, \mathcal{O}^{2k}]) = 0$.

For the second statement, it follows from \eqref{eqn:adjoint} that, for any $a \in \Gamma(\Etot)$ and $\varsigma \in \Gamma(\Etot^*)$,
\begin{equation*}
 \langle \mathcal{O}^{2k} a, \varsigma \rangle = - \langle \mathcal{O}^{2k-2} a, (\mathcal{O}^\dagger)^2\varsigma \rangle = (-1)^k \langle a, (\mathcal{O}^\dagger)^{2k}\varsigma \rangle.\qedhere
\end{equation*}
\end{proof}

Next, we present some lemmas regarding superconnections that are built out of pairs and triplets of superconnections.

\begin{lemma} \label{lemma:ftc1}
 Let $\mathcal{O}_1$ and $\mathcal{O}_2$ be (nonhomogeneous) $B$-superconnections on $\Etot$.  As in \eqref{eqn:transg}, let $\mathcal{T}_{\mathcal{O}_1, \mathcal{O}_2}$ be the $(B \times TI)$-superconnection such that 
\begin{equation*}
\mathcal{T}_{\mathcal{O}_1, \mathcal{O}_2}(a) = t \mathcal{O}_1(a) + (1-t) \mathcal{O}_2(a),
\end{equation*}
where $a \in \Gamma(\Etot)$ is viewed as a $t$-independent section of the pullback of $\Etot$ to $M \times I$.  Then
\begin{equation*}
 \int \dee{t} \dee{\dot{t}} \dot{t} \pdiff{}{t} (\mathcal{T}_{\mathcal{O}_1, \mathcal{O}_2})^{2k} = \mathcal{O}_1^{2k} - \mathcal{O}_2^{2k}.
\end{equation*}
\end{lemma}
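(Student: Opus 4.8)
The plan is to recognize the statement as a Berezin-integrated instance of the fundamental theorem of calculus, so that once the bigrading on $\Omega(B \times TI)$ is set up correctly, the identity becomes essentially formal. I would first introduce the $t$-dependent family of $B$-superconnections $\mathcal{O}_t \defequal t\mathcal{O}_1 + (1-t)\mathcal{O}_2$, so that $\mathcal{O}_1 = \mathcal{O}_t|_{t=1}$ and $\mathcal{O}_2 = \mathcal{O}_t|_{t=0}$. Recalling from \S\ref{sec:cc} that $d_{B \times TI} = d_B + \dot{t}\pdiff{}{t}$, the defining equation for $\mathcal{T}_{\mathcal{O}_1, \mathcal{O}_2}$ together with the Leibniz rule \eqref{eqn:superleibniz} yields the decomposition $\mathcal{T}_{\mathcal{O}_1, \mathcal{O}_2} = \mathcal{O}_t + \dot{t}\pdiff{}{t}$, in which $\mathcal{O}_t$ produces only $B$-forms (with $t$-dependent $\End(\Etot)$-valued coefficients) and $\dot{t}\pdiff{}{t}$ is the piece along the $TI$ factor.

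The central step is to show that the $\dot{t}$-free component of the $\End(\Etot)$-valued $(B\times TI)$-form $(\mathcal{T}_{\mathcal{O}_1, \mathcal{O}_2})^{2k}$ is exactly $\mathcal{O}_t^{2k}$, i.e.\ that $(\mathcal{T}_{\mathcal{O}_1, \mathcal{O}_2})^{2k} = \mathcal{O}_t^{2k} + \dot{t}\,\Theta(t)$ for some $t$-dependent $\End(\Etot)$-valued $B$-form $\Theta(t)$. This is a grading argument: expanding the $2k$-fold product gives a sum of words in the two summands $\mathcal{O}_t$ and $\dot{t}\pdiff{}{t}$; since $\mathcal{O}_t$ preserves $\dot{t}$-degree (because $d_B\dot{t}=0$ and multiplication by $B$-forms does not introduce $\dot{t}$) while $\dot{t}\pdiff{}{t}$ raises $\dot{t}$-degree by one and annihilates any term already carrying $\dot{t}$, the only word contributing a $\dot{t}$-free term is the one using $\mathcal{O}_t$ at every factor, namely $\mathcal{O}_t^{2k}$.

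With this decomposition in hand the computation closes immediately. Because $\dot{t}$ is independent of $t$ and $\dot{t}^2 = 0$, applying $\dot{t}\pdiff{}{t}$ kills the $\dot{t}\,\Theta(t)$ term and leaves $\dot{t}\pdiff{}{t}(\mathcal{T}_{\mathcal{O}_1, \mathcal{O}_2})^{2k} = \dot{t}\,\pdiff{}{t}\mathcal{O}_t^{2k}$. The Berezin integral $\int\dee{\dot{t}}$ then extracts the coefficient of $\dot{t}$, yielding $\pdiff{}{t}\mathcal{O}_t^{2k}$, and the ordinary integral $\int_0^1\pdiff{}{t}\mathcal{O}_t^{2k}\,\dee{t} = \mathcal{O}_1^{2k} - \mathcal{O}_2^{2k}$ by the fundamental theorem of calculus, which is the claim.

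I expect the only genuine obstacle to be the grading bookkeeping of the previous paragraph—carefully justifying that the $\dot{t}$-free part of $(\mathcal{T}_{\mathcal{O}_1, \mathcal{O}_2})^{2k}$ is $\mathcal{O}_t^{2k}$, including the sign conventions for moving the odd generator $\dot{t}$ past the (odd) operators involved. Everything after that is the verification of $\dot{t}^2=0$ and a one-variable integration, so I would write the grading step in full and treat the remaining manipulations as routine.
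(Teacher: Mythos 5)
Your proof is correct and follows essentially the same route as the paper's: both identify the $\dot{t}$-free part of $(\mathcal{T}_{\mathcal{O}_1,\mathcal{O}_2})^{2k}$ as $\mathcal{O}_t^{2k}$ (the paper by explicitly computing $\mathcal{T}^2 = \mathcal{O}_t^2 + \dot{t}(\mathcal{O}_1-\mathcal{O}_2)$ and noting the remainder is $O(\dot{t})$, you by a $\dot{t}$-degree argument on the $2k$-fold product) and then conclude by the fundamental theorem of calculus together with the Berezin integral extracting the $\dot{t}$-coefficient. Your organization via the decomposition $\mathcal{T}_{\mathcal{O}_1,\mathcal{O}_2} = \mathcal{O}_t + \dot{t}\pdiff{}{t}$ is a mild streamlining, not a different argument.
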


\begin{proof}
Using the Leibniz rule and the fact that the differential for $B \times TI$ is $d_B + \dot{t}
\pdiff{}{t}$, we compute
\begin{equation*}
 (\mathcal{T}_{\mathcal{O}_1, \mathcal{O}_2})^2 = t^2 \mathcal{O}_1^2 + (1-t)^2 \mathcal{O}_2^2 + t(1-t) [\mathcal{O}_1, \mathcal{O}_2] + \dot{t}(\mathcal{O}_1 - \mathcal{O}_2).
\end{equation*}
By the Fundamental Theorem of Calculus, we have
\begin{equation*}
 \int \dee{t} \pdiff{}{t} (\mathcal{T}_{\mathcal{O}_1, \mathcal{O}_2})^{2k} = \mathcal{O}_1^{2k} - \mathcal{O}_2^{2k} + O(\dot{t}).
\end{equation*}
Finally, we see that
\begin{equation*}
 \int \dee{\dot{t}} \dot{t} \left(\mathcal{O}_1^{2k} - \mathcal{O}_2^{2k} + O(\dot{t})\right) = \mathcal{O}_1^{2k} - \mathcal{O}_2^{2k}.\qedhere
\end{equation*}
\end{proof}

\begin{lemma} \label{lemma:ftc2}
  Let $\mathcal{O}_1$, $\mathcal{O}_2$, and $\mathcal{O}_3$ be (nonhomogeneous) $B$-superconnections on $\Etot$.  let $\mathcal{T}_{\mathcal{O}_1, \mathcal{O}_2, \mathcal{O}_3}$ be the $(B \times TI \times TI')$-superconnection such that 
\begin{equation*}
\mathcal{T}_{\mathcal{O}_1, \mathcal{O}_2, \mathcal{O}_3}(a) = st \mathcal{O}_1(a) + (1-s)t \mathcal{O}_2(a) + (1-t)\mathcal{O}_3(a),
\end{equation*}
where $t$ and $s$ are coordinates on $I$ and $I'$, respectively, and $a \in \Gamma(\Etot)$ is viewed as an $s$- and $t$-independent section of the pullback of $\Etot$ to $M \times I \times I'$.  Then
\begin{equation*}
 \int \dee{t} \dee{\dot{t}} \dot{t} \pdiff{}{t} (\mathcal{T}_{\mathcal{O}_1, \mathcal{O}_2, \mathcal{O}_3})^{2k} = \mathcal{T}_{\mathcal{O}_1, \mathcal{O}_2}^{2k} - \mathcal{O}_3^{2k}
\end{equation*}
and
\begin{equation*}
 \int \dee{s} \dee{\dot{s}} \dot{s} \pdiff{}{s} (\mathcal{T}_{\mathcal{O}_1, \mathcal{O}_2, \mathcal{O}_3})^{2k} = \mathcal{T}_{\mathcal{O}_1, \mathcal{O}_3}^{2k} - \mathcal{T}_{\mathcal{O}_2, \mathcal{O}_3}^{2k}.
\end{equation*}
\end{lemma}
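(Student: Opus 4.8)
The plan is to reduce the statement to Lemma \ref{lemma:ftc1}, which already handles a one-parameter affine interpolation, by regarding the two-parameter family $\mathcal{T}_{\mathcal{O}_1,\mathcal{O}_2,\mathcal{O}_3}$ as an \emph{iterated} one-parameter interpolation, in which one of the two interval directions is absorbed into the base Lie algebroid. The key observation is that the defining formula is affine in each of $t$ and $s$ separately, so that a direct inspection yields the two identities
\begin{align*}
\mathcal{T}_{\mathcal{O}_1,\mathcal{O}_2,\mathcal{O}_3} &= t\,\mathcal{T}_{\mathcal{O}_1,\mathcal{O}_2} + (1-t)\,\mathcal{O}_3, \\
\mathcal{T}_{\mathcal{O}_1,\mathcal{O}_2,\mathcal{O}_3} &= s\,\mathcal{T}_{\mathcal{O}_1,\mathcal{O}_3} + (1-s)\,\mathcal{T}_{\mathcal{O}_2,\mathcal{O}_3},
\end{align*}
where in the first line $\mathcal{T}_{\mathcal{O}_1,\mathcal{O}_2}$ is viewed as a nonhomogeneous superconnection for the enlarged Lie algebroid $B \times TI'$, and in the second line $\mathcal{T}_{\mathcal{O}_1,\mathcal{O}_3}$ and $\mathcal{T}_{\mathcal{O}_2,\mathcal{O}_3}$ are viewed as nonhomogeneous superconnections for $B \times TI$.

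For the first identity of the lemma, I would apply Lemma \ref{lemma:ftc1} verbatim with the base Lie algebroid $B$ replaced by $B \times TI'$, the interpolation interval taken to be $I$ (with coordinate $t$), and the two endpoint superconnections taken to be $\mathcal{T}_{\mathcal{O}_1,\mathcal{O}_2}$ and $\mathcal{O}_3$ (the latter pulled back trivially to $B \times TI'$). Since Lemma \ref{lemma:ftc1} is stated for an arbitrary Lie algebroid and arbitrary nonhomogeneous superconnections, it applies directly, and the first displayed identity shows that the interpolating superconnection it produces is exactly $\mathcal{T}_{\mathcal{O}_1,\mathcal{O}_2,\mathcal{O}_3}$. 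Its conclusion then reads $\int \dee{t}\dee{\dot{t}}\, \dot{t}\,\pdiff{}{t}(\mathcal{T}_{\mathcal{O}_1,\mathcal{O}_2,\mathcal{O}_3})^{2k} = \mathcal{T}_{\mathcal{O}_1,\mathcal{O}_2}^{2k} - \mathcal{O}_3^{2k}$, as required. The second identity is obtained in the same way, now enlarging the base to $B \times TI$, interpolating over $I'$ (coordinate $s$), and using the second displayed identity to recognize the endpoints as $\mathcal{T}_{\mathcal{O}_1,\mathcal{O}_3}$ and $\mathcal{T}_{\mathcal{O}_2,\mathcal{O}_3}$.

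The only point requiring care is bookkeeping: one must check that the superconnection built by Lemma \ref{lemma:ftc1} over the product base $B \times TI'$ genuinely coincides with $\mathcal{T}_{\mathcal{O}_1,\mathcal{O}_2,\mathcal{O}_3}$ as an operator on $\Omega(B \times TI \times TI') \otimes \Gamma(\mathcal{E})$, i.e.\ that the two odd variables $\dot{t}$ and $\dot{s}$ do not interfere. This becomes transparent once one writes $\mathcal{T}_{\mathcal{O}_1,\mathcal{O}_2,\mathcal{O}_3} = \dot{t}\pdiff{}{t} + \dot{s}\pdiff{}{s} + \mathcal{T}'$, where $\mathcal{T}'$ is the fibrewise convex combination $st\,\mathcal{O}_1 + (1-s)t\,\mathcal{O}_2 + (1-t)\,\mathcal{O}_3$: a short computation using $\dot{t}^2 = \dot{s}^2 = 0$ and $[\dot{t}\pdiff{}{t}, \dot{s}\pdiff{}{s}] = 0$ gives $\mathcal{T}_{\mathcal{O}_1,\mathcal{O}_2,\mathcal{O}_3}^2 = (\mathcal{T}')^2 + \dot{t}\,\pdiff{}{t}\mathcal{T}' + \dot{s}\,\pdiff{}{s}\mathcal{T}'$, which is precisely the structure Lemma \ref{lemma:ftc1} exploits, with the other interval's data riding along unchanged. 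I expect this verification to be the main (though minor) obstacle; once it is in place, the Berezin integration in $\dot{t}$ extracts the $\dot{t}$-independent part of $\pdiff{}{t}(\mathcal{T}_{\mathcal{O}_1,\mathcal{O}_2,\mathcal{O}_3})^{2k}$, and the Fundamental Theorem of Calculus in $t$ evaluates it at the endpoints $t=1$ and $t=0$, reproducing the claimed boundary terms. Alternatively, one may bypass the reduction entirely and run this last computation directly, which yields both identities by the same endpoint evaluation.
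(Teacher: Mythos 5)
Your argument is correct. The paper in fact omits the proof of this lemma, remarking only that it is ``similar to that of Lemma \ref{lemma:ftc1}'' (i.e.\ the same direct computation of the square, extraction of the $\dot{t}$- or $\dot{s}$-component, and the Fundamental Theorem of Calculus); your reduction, which invokes Lemma \ref{lemma:ftc1} verbatim over the enlarged base $B \times TI'$ (resp.\ $B \times TI$) after checking the affine identities $\mathcal{T}_{\mathcal{O}_1,\mathcal{O}_2,\mathcal{O}_3} = t\,\mathcal{T}_{\mathcal{O}_1,\mathcal{O}_2} + (1-t)\mathcal{O}_3 = s\,\mathcal{T}_{\mathcal{O}_1,\mathcal{O}_3} + (1-s)\mathcal{T}_{\mathcal{O}_2,\mathcal{O}_3}$, is a clean and legitimate way of making that remark precise, and your bookkeeping on the two odd variables is exactly the point that needs verifying.
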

We omit the proof of Lemma \ref{lemma:ftc2}, since it is similar to that of Lemma \ref{lemma:ftc1}.

\begin{proof}[Proof of Proposition \ref{prop:csclosed}]
Let us set $B = A \times TI$ and $\mathcal{O} = \mathcal{T}_{\totaldiff,\presup{g}\totaldiff}$ in part (1) of Lemma \ref{lemma:chern}.  Since  $d_{A \times TI} = d_A + \dot{t} \pdiff{}{t}$, we have that  

\begin{equation*}
   d_A \int \dee{t} \, \dee{\dot{t}} \str \left( (\mathcal{T}_{\totaldiff,\presup{g}\totaldiff})^{2k} \right) = \int \dee{t} \dee{\dot{t}} \dot{t} \pdiff{}{t} \str \left( (\mathcal{T}_{\totaldiff,\presup{g}\totaldiff})^{2k} \right), \\
\end{equation*}
which by Lemma \ref{lemma:ftc1} is $\str(\totaldiff^{2k}) - \str((\presup{g}\totaldiff)^{2k})$.  Since both $\totaldiff$ and $\presup{g}\totaldiff$ are flat, we conclude that $d_A \cs_k^g(\totaldiff) = 0$.
\end{proof}

\begin{proof}[Proof of Lemma \ref{lemma:keven}]
It is clear from the definitions that $\presup{g}\mathcal{T}_{\totaldiff,\presup{g}\totaldiff} = \mathcal{T}_{\presup{g}\totaldiff,\totaldiff}$, so by part (2) of Lemma \ref{lemma:chern} we have that 
\begin{equation*}
  \str \left( (\mathcal{T}_{\totaldiff,\presup{g}\totaldiff})^{2k} \right) = (-1)^k \str \left( (\mathcal{T}_{\presup{g}\totaldiff,\totaldiff})^{2k} \right).
\end{equation*}
  On the other hand, the substitution $u=1-t$ yields the equation
\begin{equation*}
  \int \dee{t} \dee{\dot{t}} \str \left( (\mathcal{T}_{\totaldiff,\presup{g}\totaldiff})^{2k} \right) = - \int \dee{t} \dee{\dot{t}} \str \left( (\mathcal{T}_{\presup{g}\totaldiff,\totaldiff})^{2k} \right),
\end{equation*}
 so we conclude that $\cs^g_k(\totaldiff) = (-1)^{k-1} \cs^g_k(\totaldiff)$, and therefore if $k$ is even we have $\cs^g_k = 0$.
\end{proof}

\begin{proof}[Proof of Proposition \ref{prop:csodd}]
By Lemma \ref{lemma:keven}, we may restrict ourselves to the case where $k$ is odd.

Let $\mathcal{O}$ be a degree $1$ superconnection such that $\presup{g}\mathcal{O} = \mathcal{O}$.  Such an $\mathcal{O}$ may be constructed as a ``block-diagonal'' $A$-superconnection, where the blocks are self-adjoint $A$-connections on $E_i$ for each $i$.  Since $\mathcal{T}_{\totaldiff,\mathcal{O}}$ is homogeneous of degree $1$, it is manifestly the case that $I \defequal \int \dee{t}\dee{\dot{t}} \str \left( (\mathcal{T}_{\totaldiff,\mathcal{O}})^{2k} \right)$ is an element of $\Omega^{2k-1}(A)$.  
To complete this proof we will show that  $2I$ and $\cs_k^g(\totaldiff)$  differ by an exact term.

Since $\presup{g}\mathcal{T}_{\totaldiff,\mathcal{O}} = \mathcal{T}_{\presup{g}\totaldiff,\mathcal{O}}$, we have by part (2) of Lemma \ref{lemma:chern} that 
\begin{equation*}
 \int \dee{t}\dee{\dot{t}} \str \left( (\mathcal{T}_{\totaldiff,\mathcal{O}})^{2k} \right)  = -\int \dee{t}\dee{\dot{t}} \str \left( (\mathcal{T}_{\presup{g}\totaldiff,\mathcal{O}})^{2k} \right).
\end{equation*}

Then
\begin{equation*}
2\int \dee{t}\dee{\dot{t}} \str \left( (\mathcal{T}_{\totaldiff,\mathcal{O}})^{2k} \right) = \int \dee{t}\dee{\dot{t}} \str \left( (\mathcal{T}_{\totaldiff,\mathcal{O}})^{2k} \right) - \int \dee{t}\dee{\dot{t}} \str \left( (\mathcal{T}_{\presup{g}\totaldiff,\mathcal{O}})^{2k} \right),
\end{equation*}
which by Lemma \ref{lemma:ftc2} is
\begin{equation}\label{eqn:dgdo}
\int \dee{t}\dee{\dot{t}} \dee{s} \dee{\dot{s}}\dot{s} \pdiff{}{s} \str \left( (\mathcal{T}_{\totaldiff, \presup{g}\totaldiff, \mathcal{O}})^{2k} \right).
\end{equation}
Since $d_{A \times TI \times TI'} = d_A + \dot{t}\pdiff{}{t} + \dot{s}\pdiff{}{s}$, we have by part (1) of Lemma \ref{lemma:chern} that \eqref{eqn:dgdo} equals
\begin{equation}\label{eqn:tritrans}
  -d_A \int \dee{t}\dee{\dot{t}} \dee{s} \dee{\dot{s}} \str \left( (\mathcal{T}_{\totaldiff, \presup{g}\totaldiff, \mathcal{O}})^{2k} \right) - \int \dee{t}\dee{\dot{t}} \dee{s} \dee{\dot{s}}\dot{t} \pdiff{}{t} \str \left( (\mathcal{T}_{\totaldiff, \presup{g}\totaldiff, \mathcal{O}})^{2k} \right).
\end{equation}
Ignoring the exact term in \eqref{eqn:tritrans}, we see by Lemma \ref{lemma:ftc2} that the second term is
\begin{equation*}
   \int \dee{s} \dee{\dot{s}} \str \left( (\mathcal{T}_{\totaldiff, \presup{g}\totaldiff})^{2k}\right) - \int \dee{s} \dee{\dot{s}} \str \left( \mathcal{O}^{2k}\right).
\end{equation*}
The latter term vanishes since the integrand does not depend on $\dot{s}$, and the first term is  $\cs_k^g(\totaldiff)$.  We conclude that $\cs_k^g(\totaldiff)$ and the ($2k-1$)-form $2\int \dee{t}\dee{\dot{t}} \str \left( (\mathcal{T}_{\totaldiff,\mathcal{O}})^{2k} \right)$ differ by an exact term.
\end{proof}

\begin{proof}[Proof of Proposition \ref{prop:metric}]
Let $g$ and $g'$ be metrics $\mathcal{E} \iso \mathcal{E}^*$, as in \S\ref{sec:cc}.  By an argument similar to that in the proof of Proposition \ref{prop:csodd}, we may see that the equation
\begin{equation}\label{eqn:twometrics}
 \cs^g_k(\totaldiff) - \cs^{g'}_k(\totaldiff) =  \int \dee{s} \dee{\dot{s}} \str \left( (\mathcal{T}_{\presup{g'}\totaldiff, \presup{g}\totaldiff})^{2k} \right)
\end{equation}
holds up to an exact term.  Thus, we need to show that the right hand side of \eqref{eqn:twometrics} is exact.

First, let $\gamma$ be a smooth path of metrics such that $\gamma(0) = g$ and $\gamma(1) = g'$, and for all $r \in [0,1]$ let $\theta_r$ be the degree $1$ $\End(\Etot)$-valued $A$-form defined as
\begin{equation*}
 \theta_r \defequal \presup{\gamma(r)}\totaldiff - \presup{g}\totaldiff.
\end{equation*}
Since $\presup{\gamma(r)}\totaldiff$ is flat for all $r$, we have that
\begin{equation}\label{eqn:thetaflat}
0 = \left(\presup{\gamma(r)}\totaldiff \right)^2 = \left(\presup{g}\totaldiff + \theta_r \right)^2 = [\presup{g}\totaldiff, \theta_r] + \theta_r^2.
\end{equation}

For an $s$-independent section $a$, we have
\begin{equation*}
 \mathcal{T}_{\presup{\gamma(r)}\totaldiff, \presup{g}\totaldiff}(a) = s \theta_r(a) + \presup{g}\totaldiff(a),
\end{equation*}
so, using the Leibniz rule for superconnections, we can compute
\begin{equation}\label{eqn:teesquared}
\begin{split}
   \left(\mathcal{T}_{\presup{\gamma(r)}\totaldiff, \presup{g}\totaldiff}\right)^2 &= s^2 \theta_r^2 + s[\presup{g}\totaldiff, \theta_r] + \dot{s}\theta_r\\
&= (s^2 - s)\theta_r^2 + \dot{s}\theta_r.
\end{split}
\end{equation}
In the last step of \eqref{eqn:teesquared} we have used \eqref{eqn:thetaflat}.  Thus we see that, up to a constant factor,
\begin{equation*}
 \int \dee{s} \dee{\dot{s}} \str \left( (\mathcal{T}_{\presup{\gamma(r)}\totaldiff, \presup{g}\totaldiff})^{2k} \right) = \str \left(\theta_r^{2k-1}\right).
\end{equation*}
We conclude that the right hand side of \eqref{eqn:twometrics}, which we are trying to prove is exact, equals $\int \dee{r} \pdiff{}{r} \str(\theta_r^{2k-1})$ up to a constant factor. 

Second, let $u_r \in \End(\Etot)$ be defined by the property
\begin{equation*}
 \langle s, s'\rangle_{\gamma(r)} = \langle u_r(s), s' \rangle_g.
\end{equation*}
It may be directly checked that $\presup{\gamma(r)}\totaldiff = u_r^{-1} \circ \presup{g}\totaldiff \circ u_r$.  We then see that 
\begin{equation}\label{eqn:dthetadr}
 \begin{split}
  \pdiff{\theta_r}{r} = \pdiff{}{r}\left[\presup{\gamma(r)}\totaldiff \right] &= \pdiff{u_r^{-1}}{r} \circ \presup{g}\totaldiff \circ u_r + u_r^{-1} \circ \presup{g}\totaldiff \circ \pdiff{u_r}{r}\\
&= \pdiff{u_r^{-1}}{r} u_r \circ \presup{\gamma(r)}\totaldiff + \presup{\gamma(r)}\totaldiff \circ u_r^{-1} \pdiff{u_r}{r}\\
&= \left[ \presup{\gamma(r)}\totaldiff, u_r^{-1} \pdiff{u_r}{r} \right].
 \end{split}
\end{equation}
In the last line of \eqref{eqn:dthetadr}, we have used the identity 
\begin{equation*}
 \pdiff{u_r^{-1}}{r}u_r + u_r^{-1} \pdiff{u_r}{r} = 0.
\end{equation*}
Using the property $\left[ \presup{\gamma(r)}\totaldiff, \theta_r^2 \right] = 0$, which follows from \eqref{eqn:thetaflat}, we deduce that
\begin{equation*}
 \pdiff{\theta_r}{r} \theta_r^{2k-2} = \left[ \presup{\gamma(r)}\totaldiff, u_r^{-1} \pdiff{u_r}{r} \theta_r^{2k-2}\right].
\end{equation*}

Finally, we see that 
\begin{equation*}
\begin{split}
  \pdiff{}{r} \str(\theta_r^{2k-1}) &= (2k-1) \str\left(\pdiff{\theta_r}{r} \theta_r^{2k-2}\right)\\
&= (2k-1) d_A \str \left(u_r^{-1} \pdiff{u_r}{r}\theta_r^{2k-2}\right),
\end{split}
\end{equation*}
where in the last line we have used Lemma \ref{lemma:strbracket}.  Thus we conclude that 
 $\int \dee{r} \pdiff{}{r} \str(\theta_r^{2k-1})$ is exact, which is what we wanted to prove.
\end{proof}

\begin{proof}[Proof of Theorem \ref{thm:cslift}]
Let $\totaldiff$ and $\nue{\totaldiff}$ be the superconnections arising from two horizontal lifts.

As we saw in the proof of Proposition \ref{prop:csodd}, the cohomology class of $\cs^g_k(\totaldiff)$ equals that of $2\int \dee{t}\dee{\dot{t}} \str \left(( \mathcal{T}_{\totaldiff, \mathcal{O}})^{2k}\right)$, where $\mathcal{O}$ is self-adjoint.  Therefore, up to an exact term, $\frac{1}{2} \left(\cs^g_k(\totaldiff) - \cs^g_k(\nue{\totaldiff}) \right)$ is
\begin{equation}\label{eqn:nuesame1}
 \int \dee{t}\dee{\dot{t}} \str \left(( \mathcal{T}_{\totaldiff, \mathcal{O}})^{2k}\right) - \int \dee{t}\dee{\dot{t}} \str \left(( \mathcal{T}_{\nue{\totaldiff}, \mathcal{O}})^{2k}\right).
\end{equation}
Again using an argument from the proof of Proposition \ref{prop:csodd}, we have that, up to an exact term, \eqref{eqn:nuesame1} is
\begin{equation}\label{eqn:dnued}
 \int \dee{s}\dee{\dot{s}} \str \left(( \mathcal{T}_{\totaldiff, \nue{\totaldiff}})^{2k}\right).
\end{equation}

Let $\sigma$ be defined as in \eqref{eq:2hats}, and let us defined a path $u_r$ of automorphisms of $\Omega(A) \otimes \Gamma(C[1] \oplus E)$ by $u_r= 1 + r \sigma$.  Then, by setting $\totaldiff_r \defequal u_r^{-1} \circ \totaldiff \circ u_r$, Theorem \ref{thm:vbtoflat} shows that $\totaldiff_0 = \totaldiff$ and $\totaldiff_1 = \nue{\totaldiff}$.
Thus, by the same argument as in the proof of Proposition \ref{prop:metric}, we see that \eqref{eqn:dnued} is exact.
\end{proof}

\bibliographystyle{abbrv}
\bibliography{bibio}

\def\polhk\#1{\setbox0=\hbox{\#1}{{\o}oalign{\hidewidth
  {\l}ower1.5ex\hbox{`}\hidewidth\crcr\unhbox0}}} \def\cprime{$'$}
\begin{thebibliography}{10}

\bibitem{ari:rhla}
C.~{Arias Abad} and M.~Crainic.
\newblock Representations up to homotopy of {L}ie algebroids, 2009.
\newblock arXiv: 0901.0319.

\bibitem{bott:cc}
R.~Bott.
\newblock Lectures on characteristic classes and foliations.
\newblock In {\em Lectures on algebraic and differential topology (Second Latin
  American School in Math., Mexico City, 1971)}, pages 1--94. Lecture Notes in
  Math., Vol. 279. Springer, Berlin, 1972.
\newblock Notes by Lawrence Conlon, with two appendices by J. Stasheff.

\bibitem{crainic:vanest}
M.~Crainic.
\newblock Differentiable and algebroid cohomology, van {E}st isomorphisms, and
  characteristic classes.
\newblock {\em Comment. Math. Helv.}, 78(4):681--721, 2003.

\bibitem{cf}
M.~Crainic and R.~L. Fernandes.
\newblock Secondary characteristic classes of {L}ie algebroids.
\newblock In {\em Quantum field theory and noncommutative geometry}, volume 662
  of {\em Lecture Notes in Phys.}, pages 157--176. Springer, Berlin, 2005.

\bibitem{elw}
S.~Evens, J.-H. Lu, and A.~Weinstein.
\newblock Transverse measures, the modular class and a cohomology pairing for
  {L}ie algebroids.
\newblock {\em Quart. J. Math. Oxford Ser. (2)}, 50(200):417--436, 1999.

\bibitem{fernandes}
R.~L. Fernandes.
\newblock Lie algebroids, holonomy and characteristic classes.
\newblock {\em Adv. Math.}, 170(1):119--179, 2002.

\bibitem{grabowski}
J.~Grabowski and M.~Rotkiewicz.
\newblock Higher vector bundles and multi-graded symplectic manifolds, 2007.
\newblock arXiv: math/0702772.

\bibitem{higgins-mac}
P.~J. Higgins and K.~Mackenzie.
\newblock Algebraic constructions in the category of {L}ie algebroids.
\newblock {\em J. Algebra}, 129(1):194--230, 1990.

\bibitem{kon-urb}
K.~Konieczna and P.~Urba{\'n}ski.
\newblock Double vector bundles and duality.
\newblock {\em Arch. Math. (Brno)}, 35(1):59--95, 1999.

\bibitem{cwm}
S.~{Mac Lane}.
\newblock {\em Categories for the working mathematician}, volume~5 of {\em
  Graduate Texts in Mathematics}.
\newblock Springer-Verlag, New York, second edition, 1998.

\bibitem{mac:dbl2}
K.~C.~H. Mackenzie.
\newblock {Double Lie algebroids and the double of a Lie bialgebroid}.
\newblock arXiv: math.DG/9808081.

\bibitem{mac:dblie2}
K.~C.~H. Mackenzie.
\newblock Double {L}ie algebroids and second-order geometry. {II}.
\newblock {\em Adv. Math.}, 154(1):46--75, 2000.

\bibitem{mackenzie-2000}
K.~C.~H. Mackenzie.
\newblock Notions of double for {L}ie algebroids, 2000.
\newblock arXiv: math/0011212.

\bibitem{mac:duality}
K.~C.~H. Mackenzie.
\newblock Duality and triple structures.
\newblock In {\em The breadth of symplectic and Poisson geometry}, volume 232
  of {\em Progr. Math.}, pages 455--481. Birkh{\"a}user Boston, Boston, MA,
  2005.

\bibitem{mackenzie-2006}
K.~C.~H. Mackenzie.
\newblock Ehresmann doubles and {D}rinfel'd doubles for {L}ie algebroids and
  {L}ie bialgebroids, 2006.
\newblock arXiv: math/0611799.

\bibitem{mythesis}
R.~A. Mehta.
\newblock {\em Supergroupoids, double structures, and equivariant cohomology}.
\newblock PhD thesis, University of California, Berkeley, 2006.
\newblock arXiv: math.DG/0605356.

\bibitem{pradines}
J.~Pradines.
\newblock Repr{\'e}sentation des jets non holonomes par des morphismes
  vectoriels doubles soud{\'e}s.
\newblock {\em C. R. Acad. Sci. Paris S{\'e}r. A}, 278:1523--1526, 1974.

\bibitem{quillen:superconnections}
D.~Quillen.
\newblock Superconnections and the {C}hern character.
\newblock {\em Topology}, 24(1):89--95, 1985.

\bibitem{vaintrob}
A.~Y. Va{\u\i}ntrob.
\newblock Lie algebroids and homological vector fields.
\newblock {\em Uspekhi Mat. Nauk}, 52(2(314)):161--162, 1997.

\end{thebibliography}
\end{document}